\documentclass[11pt, oneside]{amsart}   	
\usepackage{geometry}                		
\usepackage{graphicx}				
\usepackage{amsaddr}								
\usepackage{amssymb}
\usepackage{color}
\usepackage{amsmath}
\usepackage{amsthm}
\usepackage{hyperref}
\usepackage{appendix}
\usepackage{esint}

\allowdisplaybreaks
\numberwithin{equation}{section}

\newcommand{\Z}{\mathbb{Z}}
\newcommand{\R}{\mathbb{R}}

\newcommand{\N}{\mathbb{N}}
\renewcommand{\S}{\mathbb{S}}

\newcommand{\Ric}{\operatorname{Ric}}
\newcommand{\Rm}{\operatorname{Rm}}

\newcommand{\tr}{\operatorname{tr}}
\newcommand{\supp}{\operatorname{Supp}}

\newcommand{\id}{\operatorname{id}}
\newcommand{\loc}{\text{loc}}
\newcommand{\op}{\text{op}}

\renewcommand{\L}{\mathcal{L}}

\renewcommand{\mod}{\operatorname{minmod}}

\newcommand{\dist}{\operatorname{dist}}

\newcommand{\Supp}{\operatorname{Supp}}

\theoremstyle{plain}
\newtheorem{theorem*}{Theorem}
\newtheorem{corollary*}{Corollary}
\newtheorem{question*}{Question}
\newtheorem{definition*}{Definition}
\newtheorem{claim*}{Claim}
\newtheorem{theorem}{Theorem}[section]
\newtheorem{lemma}[theorem]{Lemma}
\newtheorem{corollary}[theorem]{Corollary}
\newtheorem{claim}[theorem]{Claim}

\theoremstyle{definition}
\newtheorem{definition}[theorem]{Definition}
\theoremstyle{remark}
\newtheorem{remark}[theorem]{Remark}

\title{A continuous Positive Mass Theorem for perturbations of Euclidean space}
\author{Paula Burkhardt-Guim}
\address{Stony Brook University}
\email{paula.burkhardt-guim@stonybrook.edu}
\date{\today}

\begin{document}
\maketitle

\section{Introduction}

\begin{abstract}
We prove a Positive Mass Theorem for $C^0$-asymptotically flat Riemannian metrics with nonnegative scalar curvature in a weak sense that are sufficiently uniformly close to Euclidean space. More precisely, we show that a $C^0$-asymptotically flat Riemannian metric that is a $C^0$ perturbation of Euclidean space with nonnegative scalar curvature in the sense of Ricci flow has nonnegative mass, where the mass is given by a $C^0$ analog of the classical ADM mass previously introduced by the author \cite{PBG23}.
\end{abstract}

In recent years considerable evidence has emerged to suggest that lower scalar curvature bounds are a $C^0$ property of Riemannian metrics \cite{Gromov14}, \cite{Gromov21}, \cite{Bamler16}, \cite{PBG19}, \cite{PBG20}, \cite{Jauregui20-2}, \cite{Li20-2}, \cite{MazurowskiYao26-1}, \cite{FogagnoloGattiPluda26}. In light of this, it is natural to ask whether the Riemannian Positive Mass Theorem, a celebrated theorem about nonnegative scalar curvature, holds for Riemannian metrics that are merely continuous. The goal of this paper is to prove a result of this type. 

The classical Riemannian Positive Mass Theorem deals with smooth Riemannian metrics that are $C^2$-asymptotically flat with everywhere nonnegative scalar curvature. We use the term ``$C^2$-asymptotically flat'' to mean the following (in the literature this property is often referred to as simply ``asymptotically flat''):
\begin{definition}\label{def:C2AF}
Let $n\geq 3$ and let $M^n$ be a smooth manifold. A $C^2$ Riemannian metric $g$ on $M$ is said to be $C^2$-asymptotically flat if there exists a compact subset $K\subset M$ and a coordinate chart $\Phi: M\setminus K\to \R^n\setminus \overline{B(0,1)}$ for $M$ such that, for some $\tau > \tfrac{1}{2}(n-2)$, $c_0 >0$, and $\bar r >0$ we have
we have
\begin{equation}\label{eq:C0AFdecay}
|(\Phi_*g)_{ij} - \delta_{ij}|\big|_x \leq c_0|x|_{\delta}^{-\tau},
\end{equation}
and 
\begin{equation}\label{eq:CkAFdecay}
|\partial_k (\Phi_*g)_{ij}|\big|_x \leq c_0|x|_{\delta}^{-\tau - |k|} \text{ for } |k|= 1,2,
\end{equation}
for all $|x|_\delta \geq \bar r$, where $\delta$ denotes the Euclidean metric and $k$ is a multiindex. We say $\tau$ is the decay rate of $g$.
\end{definition}

Recall that if $(M, g)$ is a smooth, $C^2$-asymptotically flat Riemannian manifold, then the ADM mass of $g$, introduced in \cite{ArnowittDeserMisner61}, is given by
(\cite[p. 143]{Schoen89}):
\begin{equation}\label{eq:classicalADMmassdef}
m_{ADM}(g) := \lim_{r\to \infty}\frac{1}{4\pi (n-1)\omega_{n-1}}\int_{\S(r)} \sum_{i=1}^{n}(\partial_i g_{ij} - \partial_j g_{ii})\nu^jdS,
\end{equation}
where the coordinate expression in the integrand corresponds to the coordinates $\Phi$ given by Definition \ref{def:C2AF}, 
\begin{equation*}
\S(r) = \{x\in \R^n : (x^1)^2 + \cdots + (x^n)^2 = r^2\},
\end{equation*}
 $\nu$ denotes the outward unit normal to $\S(r)$ with respect to the Euclidean metric, $\omega_{n-1}$ denotes the Euclidean volume of the $(n-1)$-dimensional unit sphere, and $dS$ denotes the Euclidean surface measure on $\S(r)$. A priori it is not clear whether the limit (\ref{eq:classicalADMmassdef}) should always exist, or whether the limit depends on the choice of $\Phi$, but Bartnik \cite[Theorems $4.2$ and $4.3$]{Bartnik86} (see also \cite{Chrusciel88} for the asymptotically Minkowski case) showed that if $g$ is $C^2$-asymptotically flat and $R(g) \in L^1(M)$, then $m_{ADM}(g)$ does indeed exist, is finite, and is independent of choice of $\Phi$ satisfying Definition \ref{def:C2AF}.

The classical Riemannian Positive Mass theorem then says the following:
\begin{theorem}\label{thm:C2PMT}
For $n\geq 3$ let $(M^n, g)$ be a smooth, $C^2$-asymptotically flat Riemannian manifold such that $R(g)\in L^1(M)$. If $R(g)\geq 0$ then $m_{ADM}(g) \geq 0$, and $m_{ADM}(g) = 0$ if and only if $g$ is flat.
\end{theorem}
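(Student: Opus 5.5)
The approach is Schoen--Yau's minimal hypersurface method combined with a reduction to harmonically flat ends. The rigidity statement will then follow by a conformal perturbation argument; in dimensions where minimal hypersurfaces may be singular, one can substitute Witten's spinor argument when $M$ is spin, or the Schoen--Yau/Lohkamp higher-dimensional arguments.

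\textbf{Step 1 (reduction to harmonically flat ends).} Using Bartnik's results cited above, $m_{ADM}(g)$ is well defined. Following Schoen--Yau, I would approximate $g$ by metrics $\tilde g$ with $R(\tilde g)\ge 0$ that are conformally flat near infinity, say $\tilde g = u^{4/(n-2)}\delta$ with $u$ harmonic there, so that
\[
u = 1 + \frac{\tilde m}{2|x|^{n-2}} + O(|x|^{1-n}),
\]
with $|\tilde m - m_{ADM}(g)|$ arbitrarily small. Concretely, I would cut $g$ off to $\delta$ outside a large ball, giving $g_\sigma$. I would then solve $-\tfrac{4(n-1)}{n-2}\Delta_{g_\sigma} u + R^-(g_\sigma) u = 0$ with $u\to 1$ at infinity. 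Here the decay $\tau>\tfrac{n-2}{2}$ together with $R\in L^1$ gives smallness of the error terms in weighted spaces, so the equation is solvable and $u$ has the expansion above. The mass convergence comes from integrating the scalar curvature formula against $u$, i.e.\ the divergence identity for the mass flux.

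\textbf{Step 2 (nonnegativity).} Suppose $\tilde m<0$. One can then further perturb so that $R>0$ near infinity, and the coordinate slabs $\{x^n = \pm h\}$ become mean convex barriers. Solving Plateau problems with boundary circles $\partial\{x^n=0\}\cap\{|x|\le \rho\}$ and letting $\rho\to\infty$, I would obtain a complete area-minimizing hypersurface $\Sigma$ that is asymptotically planar. For $3\le n\le 7$, $\Sigma$ is smooth. The stability inequality combined with the Gauss equation gives
\[
\int_\Sigma |\nabla\varphi|^2 + \tfrac12 R_\Sigma \varphi^2 \ge \int_\Sigma \tfrac12(R_M+|A|^2)\varphi^2 \ge 0 .
\]
When $n=3$, a log cutoff and Gauss--Bonnet yield $\int_\Sigma K<2\pi$, contradicting the asymptotic planarity of $\Sigma$ (whose total curvature must equal $2\pi$). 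When $n>3$, I would conformally change $\Sigma$ to a metric with $R\ge 0$ that is asymptotically flat with negative mass, and induct on dimension.

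\textbf{Step 3 (rigidity).} If $m=0$ but $\Ric\neq 0$ somewhere, I would perturb to $g_t = g - t\,\eta\Ric$ with $\eta$ a cutoff, and then conformally correct to restore $R\ge0$. The first variation shows the mass drops strictly below $0$, contradicting Step 2. So $\Ric\equiv0$. In dimension 3 this gives flatness. In general, Bishop--Gromov volume comparison plus asymptotic flatness forces Euclidean volume growth with equality, hence $(M,g)\cong\R^n$.

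The main obstacle is Step 2 in dimensions $n\ge 8$, where area minimizers can be singular. I would first try Witten's spinor proof under a spin assumption. Otherwise I would appeal to the singular slicing of Schoen--Yau or Lohkamp's compactification argument: assuming negative mass, deform $g$ so that it is exactly flat outside a compact set, quotient to a torus connected sum, and contradict the nonexistence of metrics with $R\ge0$, $R\not\equiv0$ on $T^n\#N$.
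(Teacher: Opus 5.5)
The paper does not prove this theorem. It quotes it as the classical result of Schoen--Yau and Witten, together with the recent higher-dimensional work. Most of your outline follows those classical routes. The problem is Step 1, on which everything else rests.

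\textbf{Step 1 does not approximate the mass.} Write $g_\sigma=\chi_\sigma g+(1-\chi_\sigma)\delta$ and $\tilde g=u^{4/(n-2)}g_\sigma$. For $R(\tilde g)=u^{-4/(n-2)}\bigl(R(g_\sigma)-R^-(g_\sigma)\bigr)\ge 0$ you must mean $R^-=\min\{R(g_\sigma),0\}\le 0$. With the other convention, $R^-=\max\{-R,0\}$, you would get $R(\tilde g)=u^{-4/(n-2)}(R^+-2R^-)$, which is negative where $R(g_\sigma)<0$. With the correct reading, your equation says $\frac{4(n-1)}{n-2}\Delta_{g_\sigma}u=R^-(g_\sigma)\,u\le 0$. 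So $u$ is superharmonic on a one-ended manifold. Integrating over large balls, with $u=1+A|x|^{2-n}+O(|x|^{1-n})$, gives
\[
(n-2)\,\omega_{n-1}A=-\frac{n-2}{4(n-1)}\int_M R^-(g_\sigma)\,u\,dV_{g_\sigma}\ \ge\ 0 .
\]
Hence $\tilde m\ge 0$ no matter what $m_{ADM}(g)$ is. The construction can never produce the negative-mass harmonically flat metric that Step 2 argues against, and the claim $|\tilde m-m_{ADM}(g)|\to 0$ is false.

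What the flux identity actually gives is the following. $R^-(g_\sigma)$ is supported in the cutoff annulus $A_\sigma=\{\sigma\le|x|\le 2\sigma\}$. There Bartnik's identity controls only the signed integral, $\int_{A_\sigma}R(g_\sigma)\approx -c_n\,m_{ADM}(g)$. So $\tilde m=m_{ADM}(g)+c_n\int_{A_\sigma}R^+(g_\sigma)+o(1)$, which is an inequality in the useless direction. The extra term is not small: $|R(g_\sigma)|$ is only $O(\sigma^{-2-\tau})$ on a region of volume $\sim\sigma^n$, and the cutoff produces second-derivative terms of both signs.

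\textbf{How to repair it.} Put the entire cutoff error, and nothing else, into the potential. That is, solve $L_{g_\sigma}u=\chi_\sigma R(g)\,u$, or equivalently $-\frac{4(n-1)}{n-2}\Delta_{g_\sigma}u+V_\sigma u=0$ with $V_\sigma=R(g_\sigma)-\chi_\sigma R(g)$.
\begin{itemize}
\item $V_\sigma$ vanishes wherever $\chi_\sigma\in\{0,1\}$ and is $O(\sigma^{-2-\tau})$ on $A_\sigma$. It is therefore small in weighted norms, and the equation is solvable with $u-1$ small in $C^{2,\alpha}_{-q}$ for $q<\tau$ close to $\tau$.
\item The scalar curvature is $R(\tilde g)=u^{-4/(n-2)}\chi_\sigma R(g)\ge 0$.
\item The mass is $\tilde m=-c_n\int_{A_\sigma}V_\sigma u\,dV=m_{ADM}(g)+o(1)$. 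This uses the flux identity, $R(g)\in L^1$, and $\tau>\frac{n-2}{2}$; the error from $u-1$ is $O(\sigma^{n-2-\tau-q})$.
\end{itemize}
This is exactly the choice made in Lemma \ref{lemma:conformaldeformation} of the paper, in the inhomogeneous form $L^{\hat g_t}u^t=\chi_tR$, following Lee's errata. Schoen--Yau's scalar-flat correction $L_{g_\sigma}u=0$ also works, since it gives $\tilde m\le m_{ADM}(g)+o(1)$, which is the direction a contradiction argument needs.

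\textbf{Two smaller points.}
\begin{itemize}
\item In the $n=3$ case the contradiction is misstated. Stability with a logarithmic cutoff gives $\int_\Sigma K\ge\frac12\int_\Sigma(R_M+|A|^2)>0$. Gauss--Bonnet on large disks, with $\int_{\partial D_\rho}k_g\to 2\pi$, gives $\int_\Sigma K=2\pi(\chi(\Sigma)-1)\le 0$, where $\chi(\Sigma)$ is the Euler characteristic. These two facts are what contradict each other.
\item In Step 3, the first-variation formula presumes $R(g)\equiv 0$. That formula says the corrected mass has derivative $-c_n\int\eta|\Ric|^2$ at $t=0$, which needs the $t=0$ conformal factor to be $1$. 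You should first observe that if $R(g)\not\equiv 0$, the scalar-flat conformal correction of $g$ itself already has mass strictly below $0$.
\end{itemize}
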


We remark that we have only stated Definition \ref{def:C2AF} and Theorem \ref{thm:C2PMT} for Riemannian manifolds with a single asymptotically flat end, but the result also holds for multiple ends (see, e.g., \cite{SchoenYau81}, \cite{Witten81}). There are a number of proofs of the Riemannian Positive Mass Theorem using different techniques, the earliest of which were due to Schoen--Yau \cite{SchoenYau79}, \cite{SchoenYau81} for $3 \leq n \leq 7$ and Witten \cite{Witten81} for spin manifolds in all dimensions. We also refer the reader to \cite{Lohkamp99}, \cite{Li18}, \cite{Bray11}, \cite{Lee19} for a more thorough discussion of various proof techniques. More recently the case of higher dimensional non-spin manifolds has been addressed \cite{SchoenYau19}, \cite{ChodoshMantoulidisSchulze26}, \cite{BiHaoHeShiZhu26}, \cite{BrendleWang26}.

As initially explained, the goal of this paper is to prove a $C^0$ version of Theorem \ref{thm:C2PMT}. With this goal in mind, it is natural to replace the condition that $g$ be a smooth, $C^2$-asymptotically flat Riemannian metric with the following:
\begin{definition}\label{def:C0AF}
Let $n\geq 3$ and let $M^n$ be a smooth manifold. A $C^0$ Riemannian metric $g$ on $M$ is said to be $C^0$-asymptotically flat if there exists a compact subset $K\subset M$ and a coordinate chart $\Phi$ $\Phi: M\setminus K\to \R^n\setminus \overline{B(0,1)}$ for $M$ such that, for some $\tau > \tfrac{1}{2}(n-2)$, $c_0 >0$, and $\bar r >0$ we have that $g$ satisfies (\ref{eq:C0AFdecay}) for all $|x|_{\delta} > \bar r$, but not necessarily (\ref{eq:CkAFdecay}). We say $\tau$ is the decay rate of $g$.
\end{definition}
Therefore, in order to formulate a version of Theorem \ref{thm:C2PMT} using only $C^0$ data, it remains to make sense of the ADM mass and the lower scalar curvature bound for $C^0$ Riemannian metrics (in our result we will omit the condition that the metric have scalar curvature in $L^1(M)$; we discuss the consequences of this later).

We now describe the setting of our theorem. We work in any dimension $n\geq 3$. To make sense of the ADM mass, we use previous work in which we proved the following \cite[Theorems 1.3 and 2.9]{PBG23}:
\begin{theorem}\label{thm:C0ADMmassexistence}
Let $M$ be a smooth manifold, and $g$ a continuous Riemannian metric on $M$. Suppose there exists a compact set $K$ such that $M\setminus K$ is diffeomorphic to $\R^n\setminus \overline{B(0,1)}$. For any smooth cutoff function $\varphi: \R \to \R^{\geq 0}$ with $\supp(\varphi)\subset\subset(.9, 1.1)$ and for any $r>0$, there exists a smooth family of functions $(\varphi^r)_{r>0}: \R\to \R$ such that $\varphi^r\xrightarrow[r\to\infty]{C^\infty} \varphi$, and there exists a quantity $M_{C^0}(g, \varphi^r, r)$, depending on only the $C^0$ data of $g$ in coordinates, for which the following is true:
\begin{enumerate}
\item\label{item:generalC0limitexistence} If $g$ is $C^0$-asymptotically flat with decay rate $\tau$ for some $\tau > (n-2)/2$ and $g$ has nonnegative scalar curvature in the sense of Ricci flow on $M\setminus K$, then the limit $M_{C^0}(g) := \lim_{r\to \infty}M_{C^0}(g, \varphi^r, r)$ exists, is either finite or $+\infty$, and is independent of choice of $\varphi$ and choice of $C^0$-asymptotically flat coordinate chart for $g$. 
\item If $g$ is $C^2$ and $m_{ADM}(g)$ exists, then $m_{ADM}(g) = \lim_{r\to\infty}M_{C^0}(g, \varphi^r, r)$.
\end{enumerate}

\end{theorem}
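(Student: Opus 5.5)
This result is quoted from \cite[Theorems 1.3 and 2.9]{PBG23}, and the plan below reconstructs how I would prove it. The starting point is to rewrite the ADM flux integral (\ref{eq:classicalADMmassdef}) so that no derivatives fall on $g$. Smear the sphere $\S(r)$ into an annulus using the cutoff $\varphi$: replace the surface integral by a bulk integral over $\{.9r<|x|<1.1r\}$ weighted by $\varphi(|x|/r)$, then integrate by parts. The derivatives $\partial_i g_{ij}-\partial_j g_{ii}$ move onto the test function, which yields a quantity of the form
\begin{equation*}
\frac{1}{r}\int g_{ij}\,\partial(\text{expression in }\varphi(|x|/r), x)\,dx,
\end{equation*}
together with terms measuring how far the expression is from the exact flux. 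This integral depends only on the $C^0$ values of $g_{ij}$ in coordinates. Because the raw smeared integral need not converge for merely $C^0$ metrics, I would not define $M_{C^0}(g,\varphi^r,r)$ this way directly. Instead I would evolve $g$ by Ricci--DeTurck flow $g(t)$ starting from $g$, evaluate the smeared mass of $g(t)$ at time $t\sim r^2$, and take $\varphi^r$ to be the induced family of modified cutoffs. The requirement $\varphi^r\to\varphi$ in $C^\infty$ should follow from scaling, since at scale $r$ the heat-kernel smoothing becomes negligible relative to the annulus.

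For item (2), assume $g$ is $C^2$ and $m_{ADM}(g)$ exists. Integrating by parts is then legitimate, and the smeared quantity equals a $\varphi$-weighted average over $s\in(.9,1.1)$ of the flux integrals over $\S(sr)$. The error terms are controlled by $c_0 r^{-2\tau-1}\cdot r^{n-1}$-type bounds, which tend to $0$ because $\tau>(n-2)/2$. The Ricci--DeTurck modification contributes a further error, which I would bound using the standard estimates $|g(t)-g|\lesssim$ decay and $|\partial g(t)|\lesssim t^{-1/2}\times$ decay. Together these give that the limit equals $m_{ADM}(g)$.

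For item (1), I would prove almost-monotonicity of $r\mapsto M_{C^0}(g,\varphi^r,r)$. Nonnegative scalar curvature in the sense of Ricci flow means the flow $g(t)$ satisfies $R(g(t))\geq -o(1)$ as $t\to0$. Next, apply the divergence identity: the difference of the smeared masses at radii $r_1<r_2$ equals, up to quadratic error terms of order $r^{n-2-2\tau}\to0$, a weighted integral of $R(g(t))$ over the region between them. This gives $M(r_2)\geq M(r_1)-\epsilon(r_1)$ with $\epsilon(r_1)\to0$. Such almost-monotonicity forces the limit to exist in $(-\infty,+\infty]$. Independence of $\varphi$ follows from the same identity applied to two cutoffs supported in the same annulus. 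Independence of the chart uses the fact that two $C^0$-AF charts differ asymptotically by a rigid motion plus a decaying error, together with the argument from the $C^2$ case run for the smoothed metric $g(t)$.

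The main obstacle is controlling the scalar curvature term uniformly in $r$. The flow time must scale like $r^2$ for the smoothing to be effective, while the weak lower bound $R\geq0$ is only asymptotic as $t\to0$. I would handle this by pulling back via the dilation $x\mapsto rx$, which rescales the flow to unit time on a fixed annulus. I would then use the $C^0$ decay to show that the rescaled metrics are $O(r^{-\tau})$-close to Euclidean, so the flow exists with good estimates. Finally, I would invoke the preservation of $R\geq0$ under Ricci flow for weak initial data, which comes from the Ricci-flow definition itself.
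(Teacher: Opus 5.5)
Your starting point (smearing the flux and integrating by parts, as in Remark \ref{rmk:C0agreeswithC1average}) and your endgame (Bartnik's identity for the smooth flow, where $R\geq 0$) are the right ingredients. There is, however, a genuine gap exactly where the flow enters. (The paper only quotes this result from \cite{PBG23}; the mechanism is visible in Theorem \ref{thm:fullmassexistence} and Lemma \ref{lemma:massdistortionestimate}.)

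From (\ref{eq:defC0mass}) one only has the linear bound $|M_{C^0}(g,\varphi,r)-M_{C^0}(g',\varphi,r)|\lesssim r^{n-2}\sup_{A(0,.9r,1.1r)}|g-g'|$. This is all that your estimates $|g(t)-g|\lesssim r^{-\tau}$ and $|\partial g(t)|\lesssim t^{-1/2}r^{-\tau}$ can feed into. So the ``Ricci--DeTurck modification'' error in your item (2) is only $O(r^{n-2-\tau})$, which does not tend to zero for $\tau\leq n-2$, i.e.\ in precisely the range where the theorem has content. Item (1) has the same problem: in your scheme $M(r_1)$ and $M(r_2)$ live at different flow times $r_1^2$ and $r_2^2$, and relating them needs a time-distortion bound at fixed radius.

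Finiteness of the mass is a cancellation, and the missing idea is to build that cancellation into the test function. The cutoff is evolved backward by (\ref{eq:cutofffunctionevsinglevar}), which says exactly that the vector field $\varphi_{r^{-\eta}}(|x|/r,t/r^2)\,x/|x|$ solves the backward heat equation. Since the linearization of (\ref{eq:heveuclbackground}) is $\partial_t h=\Delta h$, the part of $\frac{d}{dt}M_{C^0}(g_t,\varphi_{r^{-\eta}}(\cdot,t/r^2),r)$ that is linear in $h=g_t-\delta$ vanishes identically. Only the contributions of $Q^0=\nabla h*\nabla h$ and $\nabla^*Q^1=\nabla(h*\nabla h)$ survive, plus an exponentially small leakage term. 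These are bounded using $\sup|h|\lesssim r^{-\tau}$ and the scale-invariant $L^2$ bound on $\nabla h$ (Lemma \ref{lemma:RDTFL2distortion}), giving total distortion $O(r^{n-2-2\tau})$.

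This also changes what the quantity is. $M_{C^0}(g,\varphi^r,r)$ is not the smeared mass of a flowed metric. It is (\ref{eq:defC0mass}) evaluated on $g$ itself with $\varphi^r:=\varphi_{r^{-\eta}}(\cdot,0)$, so it depends only on $g$ over the annulus, and the flow is purely a device in the proof. (On a general $M$, which need not carry a long-time flow, the flow is run from an extension of $g|_{A(0,.8r,1.2r)}$ that is $c_0r^{-\tau}$-close to $\delta$; the rescaled $g$ itself is not globally close to Euclidean.)

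Your time scale is also wrong: it must be $r^{2-\eta}$, not $r^2$. Rescaled time $r^{-\eta}\to 0$ is what gives $\varphi^r\to\varphi$ and makes the leakage term $\exp(-D^2r^{\eta}/4)$ negligible. With $t\sim r^2$ the backward evolution runs for rescaled time of order one, so $\varphi^r$ is a fixed spread-out profile. Your claim that the smoothing is negligible at scale $r$ fails because $\sqrt t\sim r$. The price of $t=r^{2-\eta}$ is the weaker bound $|\nabla g_t|\lesssim r^{-1+\eta/2-\tau}$. This makes the gradient-squared term in Lemma \ref{lemma:Bartnikmonotonicity} of size $r^{n-2-2\tau+\eta}$ and forces $\eta<2\tau-n+2$ (cf.\ Lemma \ref{lemma:finitemass}).

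With the correct definition, item (2) needs no flow at all: by Remark \ref{rmk:C0agreeswithC1average}, $M_{C^0}(g,\varphi^r,r)$ is a $\varphi^r$-weighted average of the flux integrals over $\S(u)$, $u\in[.9r,1.1r]$. Item (1) follows by transporting the quantity to time $\approx r^{2-\eta}$ with the distortion estimate and applying Bartnik's identity there, where the flow is smooth and has $R\geq 0$ (for the global flows of this paper, Lemma \ref{lemma:RFNNSC}).
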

We describe the quantity $M_{C^0}(g)$ in more detail in Section \ref{sec:preliminaries}. In the setting of this paper, the statement that $g$ has nonnegative scalar curvature ``in the sense of Ricci flow'' means that there exists a smooth Ricci-DeTurck flow $(g_t)_{t>0}$ starting from $g$ such that $R(g_t)\geq 0$ for all $t>0$, though one can also make sense of the condition in more general settings in which such a flow might not exist; we refer the reader to Section \ref{subsec:equivdefs} for the precise definition and some related properties. The Ricci-DeTurck flow is a geometric flow that is equal to a Ricci flow up to pullback by a family of diffeomorphisms. Because it is parabolic, one expects that the flow will be smooth for positive times even if the initial data is not smooth. The condition that $g$ has nonnegative scalar curvature in the sense of Ricci flow is a reasonable weak formulation of the condition that $g$ have nonnegative scalar curvature, since for $C^2$ initial data, uniform lower scalar curvature bounds are preserved by the Ricci-DeTurck (and Ricci) flow. For a more thorough discussion of this condition we refer the reader to \cite{PBG19}.

The main theorem of this paper is the following:
\begin{theorem}\label{thm:C0PMT}
Let $n\geq 3$ and $\tau \in (\tfrac{n-2}{2}, n-2)$. There exists $\bar \varepsilon = \bar \varepsilon(n, \tau)$ such that the following is true:
Suppose $g$ is a $C^0$-asymptotically flat Riemannian metric on $\R^n$ with decay rate $\tau$, in the sense that for some $c_0 >0$ we have
\begin{equation}\label{eq:C0AFisolated}
|g_{ij} - \delta_{ij}|(x) \leq c_0 |x|^{-\tau}
\end{equation}
for all $x\in \R^n\setminus \overline{B(0,1)}$. Also assume that $|| g- \delta||_{C^0(\R^n)}< \bar \varepsilon$, and suppose that $g$ has nonnegative scalar curvature in the sense of Ricci flow. Then $M_{C^0}(g)\geq 0$.
\end{theorem}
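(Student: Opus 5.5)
We will run a Ricci--DeTurck flow from $g$ and use the smooth positive mass theorem at positive times. Since $\|g-\delta\|_{C^0(\R^n)}<\bar\varepsilon$, the Koch--Lamm theory (as used in \cite{PBG19}) gives a global smooth Ricci--DeTurck flow $(g_t)_{t>0}$ on $\R^n$, with background $\delta$, starting from $g$. It satisfies $\|g_t-\delta\|_{C^0}\le C\bar\varepsilon$ and the scale-invariant derivative bounds $|\partial^k g_t|\le C_k\bar\varepsilon\, t^{-k/2}$. By the definition of nonnegative scalar curvature in the sense of Ricci flow (Section \ref{subsec:equivdefs}) we have $R(g_t)\ge 0$ for all $t>0$.

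\textbf{Step 1: asymptotic flatness of $g_t$.} For fixed $t>0$ we show that $g_t$ is $C^2$-asymptotically flat with the same decay rate $\tau$. Write $g_t-\delta$ by Duhamel's formula against the heat kernel. The linear part inherits the decay (\ref{eq:C0AFisolated}) because $\tau<n$. The quadratic terms are controlled with weighted norms $\sup_x(1+|x|)^{\tau}|h(x)|$, using $\tau>0$ and the smallness $\bar\varepsilon$ to close a fixed point argument in the weighted space. Interior parabolic estimates then turn the $C^0$ decay into $|\partial^k g_t|(x)\le C(t)|x|^{-\tau-k}$ for $k=1,2$; the extra factor $|x|^{-k}$ comes from rescaling balls of radius $|x|/2$ at times of order $\min(t,|x|^2)$. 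For the $C^2$ theorem we also need $R(g_t)\in L^1$. Its absence is why Theorem \ref{thm:C0ADMmassexistence} allows $+\infty$. If $m_{ADM}(g_t)=+\infty$ there is nothing to prove. Otherwise, since $R\ge 0$, the divergence identity relating the mass flux at large radii to $\int R$ shows that finite mass forces $R(g_t)\in L^1$. Theorem \ref{thm:C2PMT} then gives $m_{ADM}(g_t)\ge 0$, or $+\infty$ in the other case.

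\textbf{Step 2: comparing the mass of $g_t$ with $M_{C^0}(g)$.} By Theorem \ref{thm:C0ADMmassexistence}(2), $m_{ADM}(g_t)=M_{C^0}(g_t)$. It remains to show $M_{C^0}(g)\ge \limsup_{t\to0}M_{C^0}(g_t)$, or better, that $M_{C^0}(g_t)$ is nonincreasing in $t$. The quantity $M_{C^0}(g,\varphi^r,r)$ from \cite{PBG23} is built from a Ricci--DeTurck flow run for time comparable to $r^2$ and evaluated on an annulus at scale $r$. The semigroup property of the flow therefore relates $M_{C^0}(g_t,\varphi^r,r)$ to a quantity for $g$ at time shifted by $t$. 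Since $t\ll r^2$ as $r\to\infty$, the shift should produce an error that is $o(1)$ as $r\to\infty$. This gives $M_{C^0}(g)=M_{C^0}(g_t)\ge0$.

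\textbf{Main obstacle.} I expect the hardest step to be controlling this time shift uniformly, which needs decay of the time derivative of the mass integrand. I would first try the monotonicity formula from \cite{PBG23}. That formula shows the mass quantity changes at a rate given by an integral of $R(g_s)\ge0$ plus error terms of order $r^{n-2-2\tau}$, which tend to zero because $\tau>(n-2)/2$. Integrating it over $[0,t]$ and using that the flow from $g_t$ is the time-shifted flow from $g$ yields the required inequality.
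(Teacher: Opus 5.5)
There is a genuine gap in Step 1, and it is exactly the obstruction the paper is built around. The $C^0$ decay in Step 1 is correct (it is Lemma \ref{lemma:RDTFC0AF}). The claimed derivative decay $|\partial^k g_t|(x)\le C(t)|x|^{-\tau-k}$ for a \emph{fixed} $t>0$, however, is false in general.

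At a point with $|x|\gg\sqrt t$ the flow has only run for time $t$, so the available parabolic cylinders have size $\sqrt t$, not $|x|$. Your own phrase ``times of order $\min(t,|x|^2)$'' gives $\min(t,|x|^2)=t$ there, hence a gain of $t^{-k/2}$ rather than $|x|^{-k}$. The correct bound is $|\nabla^k g_t|(x)\lesssim t^{-k/2}|x|^{-\tau}$ (Corollary \ref{cor:smoothdecay}), and this cannot be improved, even for flat metrics.

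For a counterexample, take $g=\Psi^*\delta$ with $\Psi=\mathrm{id}+V$ and $V=\epsilon(1+|x|^2)^{-\tau/2}\sin(x^1)\,e_1$. Then $g-\delta=O(\epsilon|x|^{-\tau})$ and $R\equiv 0$. The Ricci--DeTurck flow stays flat: $g_t=(F_t^{-1})^*\delta$ with $F_t-\mathrm{id}=e^{t\Delta}(\Psi^{-1}-\mathrm{id})$. The unit-frequency oscillation is damped only by a factor of order $e^{-t}$, so $|\partial g_t|$ is of order $\epsilon e^{-t}|x|^{-\tau}$ at infinity.

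Everything downstream in Step 1 therefore fails:
\begin{itemize}
\item Theorem \ref{thm:C2PMT} does not apply to $g_t$.
\item Bartnik's theorem no longer guarantees that $m_{ADM}(g_t)$ exists, so Theorem \ref{thm:C0ADMmassexistence}(2) cannot be used to identify it with $M_{C^0}(g_t)$.
\item The argument ``finite mass and $R\ge 0$ force $R(g_t)\in L^1$'' breaks. The error terms $|\partial g_t|^2$ and $|g_t-\delta||\partial^2 g_t|$ are only $O(|x|^{-2\tau})$, which is not integrable at infinity when $2\tau\le n$ (for instance, always when $n\le 4$).
\end{itemize}
Your Step 2 then becomes moot. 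Its content (the mass at scale $r$ is insensitive to running the flow for times $\ll r^2$) is essentially Lemma \ref{lemma:massdistortionestimate}.

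The missing idea is to tie the time to the spatial scale, and then repair what that breaks.
\begin{itemize}
\item On $B(0,2\sqrt t)$ the slice $g_{t^{1-\eta/2}}$ satisfies $|\nabla^k g_{t^{1-\eta/2}}|\lesssim |x|^{-\tau}t^{-(1-\eta/2)k/2}\lesssim |x|^{-\tau-k+k\eta/2}$. So it is almost $C^2$-asymptotically flat there, but no single time slice is so globally.
\item The paper therefore glues, $\hat g_t=\chi_t g_{t^{1-\eta/2}}+(1-\chi_t)\delta$. This is Euclidean outside $B(0,2\sqrt t)$, but its scalar curvature has no definite sign on the gluing annulus.
\item It then solves $L^{\hat g_t}u^t=\chi_t R(g_{t^{1-\eta/2}})$ with $u^t-1\in C^{2,\alpha}_{-\tau}$ (Lemma \ref{lemma:conformaldeformation}). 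Invertibility of the conformal Laplacian uniformly in $t$ is where the smallness $\bar\varepsilon$ and the condition $\eta<2\tau/3$ enter.
\item The result $g'_t=(u^t)^{4/(n-2)}\hat g_t$ is classically asymptotically flat with $R(g'_t)=\chi_t R\ge 0$ compactly supported, so Theorem \ref{thm:C2PMT} applies to it.
\end{itemize}
The remaining work is the one-sided comparison $m_{ADM}(g'_{r^2})\le M_{C^0}(g)+o(1)$, assuming $M_{C^0}(g)<\infty$. It combines four ingredients:
\begin{itemize}
\item Bartnik's flux identity (Lemma \ref{lemma:Bartnikmonotonicity});
\item the vanishing of annular integrals of $R(g_{r^{2-\eta}})$ (Lemma \ref{lemma:finitemass}), which is where $R\ge 0$ and finiteness of the mass are used;
\item the decay $\|u^{r^2}-1\|_{C^{2,\alpha}_{-\tau}}=O(r^{-2+2\eta})$;
\item the distortion estimate at times $r^{2-\eta}$.
\end{itemize}
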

\begin{remark}
The condition that $|| g- \delta||_{C^0(\R^n)}< \bar \varepsilon$ in Theorem \ref{thm:C0PMT} implies the existence of a \emph{long-time} solution to the Ricci-DeTurck flow starting from $g$ that satisfies certain estimates (we will describe these estimates in more detail in Section \ref{sec:preliminaries}). That the solution exists for all $t>0$ is essential to the techniques in this paper. The dependence of $\varepsilon$ on $\tau$ is used to show that the Ricci-DeTurck flow will remain $C^0$-asymptotically flat with decay rate $\tau$ for all time, and that the constant $c_0$ in (\ref{eq:C0AFisolated}) does not degenerate as $t\to \infty$.
\end{remark}
\begin{remark}
Under the hypotheses of Theorem \ref{thm:C0PMT}, the quantity $M_{C^0}(g)$ can possibly be equal to $+\infty$, but it cannot be equal to $-\infty$, and when it is finite, Theorem \ref{thm:C0PMT} says that it must be nonnegative. That we can have $M_{C^0}(g) = +\infty$ is a consequence of omitting any version of the condition that the metric have scalar curvature in $L^1(M)$ (which appears in Theorem \ref{thm:C2PMT}) in our result. In this paper we do not address the rigidity portion of the Positive Mass Theorem (that is, what happens when $M_{C^0}(g) = 0$). For rigidity statements assuming faster decay, we refer the reader to \cite{MazurowskiYao26-1}, \cite{MazurowskiYao26-2}, \cite{ChuLeeWan26}.
\end{remark}

Having stated our main result, we note that in $3$ dimensions, multiple $C^0$ versions of the Positive Mass Theorem have already been proven using different techniques. The 3-dimensional problem was first studied by Huisken \cite{Huisken06}, who used the isoperimetric deficit of large sets to make sense of the ADM mass (see also \cite{JaureguiLeeUnger24}). Subsequently, Antonelli--Fogagnolo--Nardulli--Pozzetta \cite{AntonelliFogagnoloNardulliPozzetta26} used weak inverse mean curvature flow to prove nonnegativity of a quasi-local isoperimetric mass for $C^0$ Riemannian 3-manifolds that are $C^0_{loc}$-asymptotic to $\R^3$ \cite[Definition 1.2, Definition 1.3]{AntonelliFogagnoloNardulliPozzetta26}, provided that the $C^0$ Riemannian metric has nonnegative scalar curvature in the approximate sense (that is, it admits a locally uniform approximation by smooth metrics with almost nonnegative scalar curvature). Also in 3 dimensions, Mazurowski--Yao \cite{MazurowskiYao26-2} used harmonic functions to prove a version of Theorem \ref{thm:C2PMT} (with a rigidity statement) for $C^0$-asymptotically flat Riemannian metrics on $\R^3$ with nonnegative scalar curvature in the approximate sense, where the ADM mass is replaced by the ``harmonic mass''. The harmonic mass of $g$ is given by $\lim_{r\to\infty} M_{C^0}(g, \eta, r)$, where $M_{C^0}(g, \cdot, r)$ is the quantity from Theorem \ref{thm:C0ADMmassexistence}, with $\varphi^r$ replaced by the particular function $\eta$ of \cite[Theorem 3]{MazurowskiYao26-2}. They also show \cite[Remark 4]{MazurowskiYao26-2} that under the faster decay rate $\tau > 2/3$ the harmonic mass agrees with the quantity $M_{C^0}(g)$ from Theorem \ref{thm:C0ADMmassexistence}. The relation of the harmonic mass of \cite{MazurowskiYao26-2} to Huisken's isoperimetric mass is addressed in \cite{BenattiFogagnolo26}. The main novelty of our Theorem \ref{thm:C0PMT} is that it holds in arbitrary dimensions $n\geq 3$, and that the proof relies on different techniques than \cite{AntonelliFogagnoloNardulliPozzetta26} and \cite{MazurowskiYao26-2}.

Before we continue, we remark that there have been a number of versions of the Positive Mass Theorem for singular metrics with different regularity than $C^0$ (e.g. \cite{Miao02}, \cite{McFeronSzekelyhidi12}, \cite{LeeLeFloch15}, \cite{ChuLeeZhu22}, \cite{DaiSunWang25}, \cite{Hafemann26}, \cite{LeeLitzingerSimon26}; see also \cite{JiangShengZhang22} among others). In many of these (e.g. \cite{Miao02}, \cite{McFeronSzekelyhidi12}, \cite{ChuLeeZhu22}, \cite{JiangShengZhang22}, \cite{DaiSunWang25}, \cite{Hafemann26}, \cite{LeeLitzingerSimon26}), the metric is assumed to be $C^2$ and $C^2$-asymptotically flat away from some subset, allowing to make sense of the ADM mass and asymptotic flatness of the metric. Some of these (\cite{McFeronSzekelyhidi12}, \cite{ChuLeeZhu22}, \cite{LeeLitzingerSimon26}) use Ricci or Ricci-DeTurck flow to smooth the singular metric while preserving $C^2$-asymptotic flatness and nonnegative scalar curvature, to create a setting in which the classical Positive Mass Theorem may be applied (see also \cite{DaiMa07}, \cite{Appleton18}). In this paper, such an approach is not available to us because if $g$ is only $C^0$-asymptotically flat, one can only expect that the Ricci-DeTurck flow $g(t)$ for $g$ is $C^0$-asymptotically flat for $t>0$, but not necessarily $C^2$-asymptotically flat. In particular, one does not expect to be able to directly apply the classical Positive Mass Theorem to the time slices of the Ricci-DeTurck flow. This is the crux of the problem.

In order to deal with this, we apply a gluing and conformal deformation technique inspired by that of \cite{Schoen89} (see also \cite[Chapter 3]{Lee19}) to the time-slices of the flow. In order to offset the fact that $g(t)$ is only $C^0$-asymptotically flat, we take $t$ to be large when working on regions where $|x|$ is large. We then prove
\begin{theorem}\label{thm:masssequence}
Let $n\geq 3$ and $\tau \in (\tfrac{n-2}{2}, n-2)$. There exists $\bar \varepsilon = \bar \varepsilon(n, \tau)$ such that the following is true:
Suppose $g$ is a $C^0$-asymptotically flat Riemannian metric on $\R^n$ with decay rate $\tau$, in the sense that for some $c_0 >0$ we have
\begin{equation}\label{eq:C0AFisolated}
|g_{ij} - \delta_{ij}|(x) \leq c_0 |x|^{-\tau}
\end{equation}
for all $x\in \R^n\setminus \overline{B(0,1)}$. Also assume that $|| g- \delta||_{C^0(\R^n)}< \bar \varepsilon$, and suppose that $g$ has nonnegative scalar curvature in the sense of Ricci flow. Then there exists a family of Riemannian metrics $(g_t')_{t> 0}$ such that 
\begin{enumerate}
\item Each $g_t'$ is classically asymptotically flat (with scalar curvature in $L^1(\R^n)$),
\item Each $g_t'$ has nonnegative scalar curvature on $\R^n$, and
\item We have
\begin{equation}
M_{C^0}(g_t') \leq M_{C^0}(g) + \delta(t)
\end{equation}
for some $\delta(t)$ such that $\delta(t)\xrightarrow[t\to\infty]{} 0$.
\end{enumerate}
\end{theorem}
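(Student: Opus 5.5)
The plan is to build $g_t'$ from the long-time Ricci-DeTurck flow $(g_s)_{s>0}$ starting from $g$, whose existence is guaranteed by the smallness $\|g-\delta\|_{C^0}<\bar\varepsilon$. I would first record the standard estimates for this flow, which are valid uniformly in time:
\[
|g_s-\delta|\le C\bar\varepsilon, \qquad |\nabla^k g_s|\le C_k s^{-k/2}.
\]
I would also record the persistence of $C^0$-asymptotic flatness in the form $|g_s-\delta|(x)\le C c_0(|x|+\sqrt{s})^{-\tau}$, with $C$ independent of $s$; this is where the dependence of $\bar\varepsilon$ on $\tau<n-2$ enters, through a barrier or heat-kernel comparison for the linearized equation. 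Finally, $R(g_s)\ge 0$ by hypothesis. Interpolating the two bounds, on the region $|x|\sim r$ we get $|\partial^k g_s|\lesssim r^{-\tau}s^{-k/2}$. So if $s$ is comparable to $r^2$, the metric is effectively $C^2$-asymptotically flat at scale $r$.

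Fix $t$. I would use a Schoen-type gluing: choose a large radius $\rho(t)\to\infty$ and set $\tilde g_t=\chi\, g_{s}+(1-\chi)\,\delta$, where $\chi$ is a cutoff equal to $1$ on $B(\rho)$ and $0$ outside $B(2\rho)$, and $s=s(t)$ is chosen of order $\rho^2$ (or larger). Then $\tilde g_t$ is smooth and exactly Euclidean outside $B(2\rho)$. In the annulus its scalar curvature satisfies $R(\tilde g_t)\ge -C c_0\rho^{-\tau-2}$, because $|\partial^2 g_s|\lesssim \rho^{-\tau}s^{-1}$ and $|\partial g_s|^2\lesssim\rho^{-2\tau}s^{-1}$. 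Inside $B(\rho)$ we have $R\ge 0$.

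Next I would conformally correct. Let $u$ solve
\[
-a_n\Delta_{\tilde g_t}u+R^-(\tilde g_t)\,u=0, \qquad u\to 1 \text{ at infinity},
\]
or the analogous equation with $R(\tilde g_t)$ replaced by its negative part cut off, so that $g_t'=u^{4/(n-2)}\tilde g_t$ has nonnegative scalar curvature after a standard adjustment (e.g.\ $R(g_t')=u^{-\frac{n+2}{n-2}}(R+R^-)u\ge0$). Since $\tilde g_t$ is Euclidean near infinity, $u=1+A|x|^{2-n}+O(|x|^{1-n})$, and $g_t'$ is classically asymptotically flat with $R\in L^1$, since $R$ is compactly supported. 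Its mass is
\[
M_{C^0}(g_t')=m_{ADM}(g_t')=-\,c_n A,
\]
by Theorem \ref{thm:C0ADMmassexistence}(2) and the flat exterior. Here $A\le 0$ is controlled by $\int R^-u\lesssim c_0\rho^{-\tau-2}\rho^n=c_0\rho^{n-2-\tau}$. This bound blows up, so the naive gluing alone is not enough. I would instead glue $g_s$ to a Schwarzschild-type model carrying the mass of $g_s$ measured at radius $\rho$, rather than to $\delta$. I would then show that the error $\int_{\rm annulus}R^-$ is controlled by the flux defect $M_{C^0}(g_s,\varphi^\rho,\rho)-M_{C^0}(g,\varphi^\rho,\rho)$, which should be small as the parameters go to infinity.

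The main obstacle is therefore item (3): showing that the mass of the glued metric is at most $M_{C^0}(g)+\delta(t)$. I would try to prove that $M_{C^0}(g_s,\varphi^r,r)$ is almost monotone nonincreasing in $s$ when $s\lesssim r^2$. The idea is that the time derivative of the smoothed flux equals a bulk integral of $-R(g_s)$ plus boundary errors of order $r^{n-2-2\tau}\to 0$, using $\tau>\frac{n-2}{2}$; this is the same mechanism that gives existence in Theorem \ref{thm:C0ADMmassexistence}(1). Since $R\ge 0$, this yields $\lim_r M_{C^0}(g_{s(r)},\varphi^r,r)\le M_{C^0}(g)$. I would then choose the gluing to be faithful to the flux, i.e.\ a Schwarzschild exterior of exactly that mass, matched with conformal-factor errors quadratic in $\rho^{-\tau}$. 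Summing these errors gives $\delta(t)\to0$ as $\rho(t)\to\infty$.
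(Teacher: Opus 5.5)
\textbf{The gap is in the conformal step, and the Schwarzschild repair does not fix it.}

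If $u$ removes only $R^-$, the mass of $g_t'$ exceeds that of the glued metric by about $c\int R^-(\tilde g_t)\,u$. So you would need $\int_{A(0,\rho,2\rho)}R^-(\tilde g_t)\to0$, and this fails in general.

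Since $\tau<n-2$, the metric $g_s$ differs at radius $\rho$ from \emph{every} Schwarzschild metric by $O(\rho^{-\tau})\gg|m|\rho^{2-n}$. Hence any cutoff interpolation, whether to $\delta$ or to Schwarzschild, has scalar curvature of pointwise size $\rho^{-\tau-2}$ and indefinite sign on a region of volume $\sim\rho^n$. A flux defect controls only the \emph{signed} integral. The linear part of $R$ is a divergence, so $\int_{A(0,\rho,2\rho)}R(\tilde g_t)$ is a difference of boundary fluxes up to quadratic terms of order $\rho^{n-2-2\tau}$. By contrast, $\int R^-$ is not a flux and is typically of order $\rho^{n-2-\tau}\to\infty$.

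A flat example shows this concretely. Take $g=\Phi^*\delta$ with $\Phi(x)=(1+\epsilon|x|^{-\tau})x$ for large $|x|$.
\begin{itemize}
\item Then $R(g_s)\equiv0$ and $M_{C^0}(g)=0$, so your Schwarzschild model is just $\delta$.
\item The rotationally symmetric metric $\chi g_s+(1-\chi)\delta$ has mass function $\psi^{n-2}(1-|\nabla\psi|^2)$, with $\psi$ the area radius.
\item This function rises to order $\epsilon\rho^{n-2-\tau}$ across the annulus and returns to $0$.
\item Its scalar curvature has the sign of the radial derivative of that function, so $\int R^-\gtrsim\epsilon\rho^{n-2-\tau}$, and your $g_t'$ has mass of that order.
\end{itemize}
So the conformal error is linear in $\rho^{-\tau}$ (times $\rho^{n-2}$), not quadratic, and item (3) fails.

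Taking $s\gg\rho^2$ does not rescue the argument. It shrinks $R^-$, but by Lemma \ref{lemma:RDTFC0AF} and Corollary \ref{cor:smoothdecay} the local mass of $g_s$ at scale $\rho$ is then $O(\rho^{n-1}s^{-(\tau+1)/2})$. That local mass is all the glued metric retains, and once it is negligible, (3) becomes equivalent to the theorem itself.

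There is also a sign slip: $-a_n\Delta u+R^-u=0$ gives $R(g_t')=u^{-4/(n-2)}(R-R^-)$. To get $u^{-4/(n-2)}R^+$ you need $-a_n\Delta u-R^-u=0$.

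\textbf{What is missing} is a conformal correction that cancels the \emph{whole} indefinite scalar curvature created by the gluing, so that the mass is governed only by signed quantities. The paper glues $g_{t^{1-\eta/2}}$ to $\delta$ at radius $\sqrt t$ and solves $L^{\hat g_t}u^t=\chi_tR(g_{t^{1-\eta/2}})$. This gives $R(g_t')=(u^t)^{-\frac{n+2}{n-2}}\chi_tR(g_{t^{1-\eta/2}})\ge0$. Invertibility of $L^{\hat g_t}$, uniformly in $t$, comes from comparing it with $-a_n\Delta^{\hat g_t}+\chi_tR(g_{t^{1-\eta/2}})$ (injective by the maximum principle) and with $\Delta^\delta$, using the smallness of $\bar\varepsilon$.

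After that, no $L^1$ bound on the gluing error is ever needed. By Bartnik's identity (Lemma \ref{lemma:Bartnikmonotonicity}), the relevant masses differ only by two kinds of terms:
\begin{itemize}
\item integrals of $R(g_{t^{1-\eta/2}})\ge0$ over annuli at scale $\sqrt t$, which tend to $0$ by Lemma \ref{lemma:finitemass} when $M_{C^0}(g)<\infty$ (otherwise (3) is vacuous);
\item $\nabla g*\nabla g$ terms, which are small because $\tau>\frac{n-2}{2}$ and $\eta$ is small.
\end{itemize}

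Finally, the link $M_{C^0}(g_{r^{2-\eta}},\varphi,r)\to M_{C^0}(g)$ is the two-sided estimate of Lemma \ref{lemma:massdistortionestimate}. That estimate comes from the quadratic terms of the flow and requires flow time $r^{2-\eta}<r^2$. It is not a one-signed $-\int R$ monotonicity, as your proposal assumes.
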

\begin{proof}[Proof of Theorem \ref{thm:C0PMT}, given Theorem \ref{thm:masssequence}]
Let $(g_t')$ be the family of metrics given by Theorem \ref{thm:masssequence}. We apply the classical Riemannian Positive Mass Theorem to each $g_t'$ to find that $M_{C^0}(g_t')\geq 0$ for all $t> t_0$. In particular, the inequality in Theorem \ref{thm:masssequence} implies that
\begin{equation}
-\delta(t) \leq M_{C^0}(g)
\end{equation}
for $t> 0$, so letting $t\to \infty$ we find that $M_{C^0}(g)\geq 0$.
\end{proof}

We now explain the structure of the rest of the paper. In Section \ref{sec:preliminaries} we describe the Ricci-DeTurck flow and prove some estimates about the distortion of the $C^0$ local mass along the flow. We also discuss the weak notion of nonnegative scalar curvature that we use in this paper, and we record some functional analytic results that we use in subsequent sections. In Section \ref{sec:approximationproof} we prove Theorem \ref{thm:masssequence}. In Appendix \ref{appendix:RDTFweightedXnorm} we prove a local stability result for Ricci-DeTurck flow, which may be of independent interest.

\noindent \textbf{Acknowledgements.} I had helpful discussions about this topic with many people, whom I would like to thank. They are: Lan-Hsuan Huang, Florian Johne, Sven Hirsch, Bob Haslhofer, Marcus Khuri, Claude LeBrun, Richard Bamler, Otis Chodosh, Natasa Sesum, Jeff Jauregui, and Mattia Fogagnolo. I would also like to thank Dan Lee for writing the excellent book \emph{Geometric Relativity} \cite{Lee19}. This work was supported in part by a grant of access to OpenAI models through the ChatGPT for Academic Researchers program.

\noindent \textbf{AI Use.} AI tools were used for the completion of this paper in the following ways: GPT-6 Astra was used to perform a literature search. The strategy of the proof of Lemma \ref{lemma:RDTFC0AF} is based on a strategy suggested by GPT-6 Astra, though the proof was carried out and written up by the author. The idea to use an exponential weight similar to the one that appears in Definition \ref{def:weightednorms} was partially inspired by an exchange with ChatGPT 5.6, though the actual form of the weight used in this paper and the proofs of the corresponding weighted estimates are due to the author. This paper was not written by AI.

\section{Preliminaries}\label{sec:preliminaries}
\subsection{Ricci and Ricci-DeTurck flow preliminaries}
If $M$ is a smooth manifold and $(\tilde g_t)_{t\in (0, T)}$ is a smooth family of Riemannian metrics on $M$, recall that $\tilde g_t$ evolves by Ricci flow if
\begin{equation}\label{eq:RF}
\partial_t \tilde g_t = -2\Ric(\tilde g_t).
\end{equation}
We use the notation $\tilde g_t$ to distinguish this flow from the Ricci-DeTurck flow, which we use more often in this paper, and which we will denote by $g_t$. The Ricci-DeTurck flow, introduced by DeTurck in \cite{DeTurck83}, is a strongly parabolic flow that is related to the Ricci flow by pullback via a family of diffeomorphisms. More specifically, we define the following operator, which maps symmetric $2$-forms on $M$ to vector fields:
\begin{equation}\label{eq:Xoperator}
X_{\bar g}(g):= \sum_{i=1}^n(\nabla^{\bar g}_{e_i}e_i - \nabla^{g}_{e_i}e_i),
\end{equation}
where $\{e_i\}_{i=1}^n$ is any local orthonormal frame with respect to $g$. Then the Ricci-DeTurck equation is
\begin{equation}\label{eq:RDTF}
\partial_t g(t) = -2\Ric(g(t)) - \L_{X_{\bar g(t)}(g(t))}g(t),
\end{equation}
where $\bar g(t)$ is a background Ricci flow. 

In this paper we work with Ricci-DeTurck flows on $\R^n$ with respect to a Euclidean background, so we take $\bar g(t)\equiv \delta$ and (\ref{eq:RDTF}) becomes
\begin{equation}\label{eq:RDTFeucl}
\partial_t g(t) = -2\Ric(g(t)) - \L_{X_{\delta}(g(t))}g(t).
\end{equation}
As mentioned, if $g(t)$ solves (\ref{eq:RDTF}) then it is related to a Ricci flow via pullback by diffeomorphisms. More precisely, if $g(t)$ solves (\ref{eq:RDTF}) and $(\chi_t)_{t\in (0, T)}: M\to M$ is the family of diffemorphisms satisfying
\begin{equation}\label{eq:diffeoseq}
\begin{cases}
X_{\bar g(t)}(g(t))f &= \frac{\partial}{\partial t}(f\circ\chi_t) \text{ for all } f\in C^\infty(M)\\
\chi_{\bar t} &= \id,
\end{cases}
\end{equation}
 then $\tilde g(t):= \chi_t^*g(t)$ solves (\ref{eq:RF}) with the condition $\tilde g(\bar t) = g(\bar t)$.

It is known (see \cite[Appendix A]{BamlerKleiner22}) that if $g_t$ solves (\ref{eq:RDTF}) with respect to the background Ricci flow $\bar g_t$, and if $h_t  = g_t - \bar g_t$, then the evolution equation for $h_t$ is given by
\begin{equation}\label{eq:hevolution}
\partial_t h_t = -L h_t + Q[h_t],
\end{equation}
where 
\begin{equation}\label{eq:Lis}
\begin{split}
L h_t &= -\Delta^{\bar g_t}h_t - 2\Rm^{\bar g_t}[h_t]
\\&:= -\Delta^{\bar g_t}h_t - 2{\bar g}^{pq}R_{pij}^{m}h_{q m}dx^i\otimes dx^j + \bar g^{pq}(h_{pj}R_{qi} + h_{ip}R_{qj})
\end{split}
\end{equation}
(note that our notation convention for $\Rm^{\bar g_t}[h_t]$ differs slightly from that of \cite{BamlerKleiner22} as we do not use the Uhlenbeck trick in this paper) and $Q$ denotes the quadratic term 
\begin{equation}
\begin{split}
\left(Q_{\bar g_t}[h_t]\right)_{ij}&:= \left((\bar g+h)^{pq} - \bar g^{pq}\right)\left(\nabla^2_{pq}h_{ij} +R_{pij}^mh_{mq} + R_{pji}^mh_{mq}\right)
\\& \qquad + \left(\bar g^{pq} - (\bar g+h)^{pq}\right)\left(R_{ipq}^mh_{mj} + R_{jpq}^mh_{im}\right)
\\& -\frac{1}{2}(\bar g+h)^{pq}(\bar g+h)^{m\ell}\big(-\nabla_i h_{pm}\nabla_jh_{q\ell} - 2\nabla_mh_{ip}\nabla_qh_{j\ell}
\\& \qquad + 2\nabla_mh_{ip}\nabla_{\ell}h_{jq} + 2\nabla_ph_{i\ell}\nabla_jh_{qm} + 2\nabla_ih_{pm}\nabla_qh_{j\ell}\big)
\\&= \nabla_p (\left((\bar g+h)^{pq} - \bar g^{pq}\right)\nabla_qh_{ij}) 
\\& \qquad - \left(\nabla_p\left((\bar g+h)^{pq} - \bar g^{pq}\right)\right)\nabla_qh_{ij} +  \left((\bar g+h)^{pq} - \bar g^{pq}\right)\left(R_{pij}^mh_{mq} + R_{pji}^mh_{mq}\right)
\\& \qquad + \left(\bar g^{pq} - (\bar g+h)^{pq}\right)\left(R_{ipq}^mh_{mj} + R_{jpq}^mh_{im}\right)
\\& -\frac{1}{2}(\bar g+h)^{pq}(\bar g+h)^{m\ell}\big(-\nabla_i h_{pm}\nabla_jh_{q\ell} - 2\nabla_mh_{ip}\nabla_qh_{j\ell}
\\& \qquad + 2\nabla_mh_{ip}\nabla_{\ell}h_{jq} + 2\nabla_ph_{i\ell}\nabla_jh_{qm} + 2\nabla_ih_{pm}\nabla_qh_{j\ell}\big),
\end{split}
\end{equation}
where here $\nabla$ denotes the covariant derivative with respect to $\bar g_t$. The second equality follows from the Leibniz rule. 

We often write
\begin{equation}
Q[h_t] = Q^0_t + \nabla^* Q^1_t,
\end{equation}
where
\begin{equation}\label{eq:Q0is}
\begin{split}
Q^0_t&:=-\frac{1}{2}(\bar g+h)^{pq}(\bar g+h)^{m\ell}\big(-\nabla_i h_{pm}\nabla_jh_{q\ell} - 2\nabla_mh_{ip}\nabla_qh_{jm}
\\& \qquad + 2\nabla_mh_{ip}\nabla_{\ell}h_{jq} + 2\nabla_ph_{i\ell}\nabla_jh_{qm} + 2\nabla_ih_{pm}\nabla_qh_{j\ell}\big)
\\& \qquad - \left(\nabla_p((\bar g+h)^{pq} - \bar g^{pq})\right)\nabla_qh_{ij} +  \left((\bar g+h)^{pq} - \bar g^{pq}\right)\left(R_{pij}^mh_{mq} + R_{pji}^mh_{mq}\right)
\\& = (\bar g + h)^{-1}\star (\bar g + h)^{-1} \star \nabla h \star \nabla h + ((\bar g + h)^{-1} - \bar g^{-1})\star \Rm^{\bar g_t}\star h
\end{split}
\end{equation}
and
\begin{equation}\label{eq:Q1is}
\nabla^*Q^1_t:= \nabla_p (\left((\bar g+h)^{pq} - \bar g^{pq}\right)\nabla_qh_{ij}) = \nabla(((\bar g + h)^{-1} - \bar g^{-1})\star \nabla h),
\end{equation}
where we use the notation $A\star B$ for two tensor fields $A$ and $B$ to mean a linear combination of products of the coefficients of $A$ and $B$, and $(\bar g + h)^{-1}$ and $\bar g^{-1}$ denote tensor fields with coefficients $(\bar g + h)^{ij}$ and $\bar g^{ij}$ respectively. On $\R^n$ we use $A*B$ to denote any term such that $|A*B|_\delta \leq c(n)|A|_\delta |B|_\delta$, where $\delta$ denotes the Euclidean metric.

Henceforth we take all covariant derivatives and measure all balls and all norms with respect to the Euclidean metric $\delta$ on $\R^n$, unless otherwise stated. When $\bar g(t)\equiv \delta$, (\ref{eq:hevolution}) becomes (see \cite[(4.4)]{KochLamm12}): 
\begin{equation}\label{eq:heveuclbackground}
\begin{split}
\partial_t h_{ij} &= \Delta h_{ij} + \frac{1}{2}(\delta + h)^{pq}(\delta + h)^{m\ell}\big( \nabla_i h_{pm}\nabla_j h_{q\ell} + 2\nabla_m h_{ip}\nabla_q h_{jm} -2\nabla_m h_{ip}\nabla_{\ell}h_{jp} 
\\& - 2\nabla_p h_{i\ell}\nabla_j h_{qm} -2\nabla_i h_{pm}\nabla_q h_{j\ell} \big) - \nabla_p((\delta + h)^{pq})\nabla_q h_{ij}
\\& + \nabla_p \big( ((\delta + h)^{pq} - \delta^{pq})\nabla_q h_{ij} \big)
\\&=: \Delta h_{ij} + Q^0[h] + \nabla^*Q^1[h]
\\& \text{ where}
\\& Q^0[h] = \frac{1}{2}(\delta + h)^{pq}(\delta + h)^{m\ell}\big( \nabla_i h_{pm}\nabla_j h_{q\ell} + 2\nabla_m h_{ip}\nabla_q h_{jm} -2\nabla_m h_{ip}\nabla_{\ell}h_{jp} 
\\& - 2\nabla_p h_{i\ell}\nabla_j h_{qm} -2\nabla_i h_{pm}\nabla_q h_{j\ell} \big) - \nabla_p((\delta + h)^{pq})\nabla_q h_{ij} = \nabla h * \nabla h,
\\& \nabla^*Q^1[h] = \nabla_p \big( ((\delta + h)^{pq} - \delta^{pq})\nabla_q h_{ij} \big) = \nabla (h * \nabla h),
\end{split}
\end{equation}
where $\Delta$ denotes the usual Euclidean Laplacian.

In particular, if $h$ solves (\ref{eq:hevolution}) on a Euclidean background with initial data $h_0$, then it also solves the integral equation
\begin{equation}\label{eq:integraleeq}
\begin{split}
h(x,t) &= \int_{\R^n}\Phi(x,t;y,0)h_0(y)dy 
\\& + \int_0^t\int_{\R^n} \Phi(x,t;y,s)Q^0[h](y,s) + \nabla^* \Phi(x,t;y,s)Q^1[h](y,s)dyds,
\end{split}
 \end{equation}
 where $\Phi$ denotes the Euclidean heat kernel for $(0,2)$-tensor fields, and
 \begin{equation*}
 \begin{split}
 |Q^0[h]| & = |\nabla h * \nabla h| \leq c(n)|\nabla h|^2\\
 |Q^1[h]| & = |h * \nabla h| \leq c(n)|h| |\nabla h|.
 \end{split}
 \end{equation*}

\begin{definition}\label{def:variousnorms}
For a time-dependent $(0,2)$-tensor field $h$ on $\R^n$ we define the following norm, as in \cite{KochLamm12}:
\begin{equation*}
\begin{split}
|| h ||_X &:= \sup_{0 < t < \infty}|| h(t) ||_{L^\infty(\R^n)} 
\\& + \sup_{x\in \R^n}\sup_{0 < r}\left( r^{-n/2}||\nabla h ||_{L^2(B(x,r)\times (0, r^2))} + r^{\tfrac{2}{n+4}}|| \nabla h||_{L^{n+4}(B(x, r)\times(\tfrac{r^2}{2}, r^2))} \right).
\end{split}
\end{equation*}
\end{definition}

We now record the following result concerning Ricci-DeTurck flows starting from small $C^0$ perturbations of Euclidean space (cf. \cite{Simon02}, \cite[Theorem $4.3$]{KochLamm12}, \cite[Lemma $3.3$ and Corollary $3.4$]{PBG19}, \cite{CaiWang26}):
\begin{lemma}\label{lemma:KL}
There exists $ \varepsilon = \varepsilon(n) < 1$ and $c = c(n)$ such that the following is true:

If $g_0$ is any continuous Riemannian metric on $\R^n$ such that $|| g_0 - \delta||_{C^0(\R^n)} < \varepsilon$ then there exists a smooth solution $(g_t)_{t>0}$ to (\ref{eq:RDTFeucl}) such that 
\begin{equation}\label{eq:RDTFinitialcondition}
g_t \xrightarrow[t\searrow 0]{C^0_{\loc}} g_0,
\end{equation}
\begin{equation}\label{eq:RDTFXest}
|| g_t - \delta||_{X} \leq c|| g_0 - \delta||_{C^0(\R^n)},
\end{equation}
where $||\cdot||_X$ as in Definition \ref{def:variousnorms}, and, for all $k\in \N$ there exists $c_k(n)>0$ such that for all $t>0$,
\begin{equation}\label{eq:RDTFderivests}
|| \nabla^k(g_t - \delta)||_{C^0(\R^n)} \leq c_k(n)\frac{|| g_0 - \delta||_{C^0(\R^n)}}{t^{k/2}}.
\end{equation}
\end{lemma}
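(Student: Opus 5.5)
This is the Koch--Lamm small-data theory for Ricci-DeTurck flow on Euclidean space, and I would prove it by a fixed point argument in the space $X$. Set $h = g - \delta$. A solution of (\ref{eq:RDTFeucl}) is then the same as a fixed point of the map
\begin{equation*}
\Psi[h](t) = e^{t\Delta}h_0 + \int_0^t e^{(t-s)\Delta}\big(Q^0[h](s) + \nabla^*Q^1[h](s)\big)\,ds,
\end{equation*}
which is the integral equation (\ref{eq:integraleeq}).

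The key estimates are the following.

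\textbf{Linear estimate.} I would show $\|e^{t\Delta}h_0\|_X \le c(n)\|h_0\|_{L^\infty}$.
- The $L^\infty$ part is immediate.
- The local $L^2$ gradient bound $r^{-n/2}\|\nabla e^{t\Delta}h_0\|_{L^2(B(x,r)\times(0,r^2))} \lesssim \|h_0\|_\infty$ is the BMO Carleson characterization for heat extensions. One can prove it directly: split $h_0$ into a part supported near $B(x,2r)$ and a part far from it. For the near part, use the energy identity $\int_0^\infty\!\int|\nabla e^{t\Delta}f|^2 = \tfrac12\|f\|_2^2$. For the far part, use pointwise kernel bounds $|\nabla e^{t\Delta}f|\lesssim t^{-1/2}\|f\|_\infty$ together with Gaussian decay.
- The $L^{n+4}$ term follows from the pointwise bound $|\nabla e^{t\Delta}h_0|\le c\,t^{-1/2}\|h_0\|_\infty$ integrated over $(r^2/2,r^2)$.

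\textbf{Bilinear estimate.} I would show
\begin{equation*}
\Big\|\int_0^t e^{(t-s)\Delta}(\nabla h*\nabla k + \nabla(h*\nabla k))\,ds\Big\|_X \le c(n)\|h\|_X\|k\|_X,
\end{equation*}
with a Lipschitz analogue that accounts for the factor $(\delta+h)^{-1}$ (harmless when $\|h\|_\infty<1/2$). This is the step I expect to be the main obstacle, because $\nabla h$ lies only in a parabolic Carleson/Morrey space. Following Koch--Tataru for Navier--Stokes, I would proceed as follows.
1. Fix $(x,t)$ and split the source into the region $s\in(0,t/2)$ and the region $s\in(t/2,t)$.
2. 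On $(0,t/2)$ the kernel is smooth at scale $\sqrt t$. Summing the Carleson $L^2$ bounds over a dyadic decomposition of space by distance, with Gaussian weights, gives $L^\infty$ control of $\Psi$.
3. On $(t/2,t)$, use the $L^{n+4}$ component. Hölder with the kernel in $L^{p'}$ works since $n+4>n+2$, the parabolic dimension.
4. For the gradient components of the $X$ norm, use the energy estimate for the inhomogeneous heat equation on localized cylinders: the $L^1$ source $Q^0$ is handled via duality/Carleson, and $\nabla^*Q^1$ via the $L^2$ energy inequality with $|Q^1|\le\|h\|_\infty|\nabla k|$.
5. Interior parabolic $L^p$ estimates then give the $L^{n+4}$ bound.

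\textbf{Conclusion.} With these estimates, $\Psi$ is a contraction on the ball of radius $2c\|h_0\|_\infty$ in $X$ once $\varepsilon(n)$ is small. The fixed point satisfies (\ref{eq:RDTFXest}).

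The initial condition (\ref{eq:RDTFinitialcondition}) holds for two reasons. First, $e^{t\Delta}h_0\to h_0$ locally uniformly because $h_0$ is continuous. Second, the Duhamel term tends to $0$ in $L^\infty_{loc}$ as $t\to0$: the same bilinear estimate restricted to $(0,t)$ is controlled by $X$-type norms over cylinders of size $\sqrt t$. Those norms are small for small $t$, because one can decompose $h_0$ into a uniformly continuous part, which is locally nearly constant, plus a small-norm remainder.

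For smoothness and (\ref{eq:RDTFderivests}), I would use parabolic rescaling. The equation and the $X$ and $L^\infty$ norms are invariant under $h(x,t)\mapsto h(\lambda x,\lambda^2 t)$, so it suffices to treat $t=1$. On $B(x,1)\times(1/2,1)$ the solution is small in $L^\infty$ with $\nabla h\in L^{n+4}$. Bootstrapping then gives bounds $|\nabla^k h|\le c_k\|h_0\|_\infty$ at $t=1$:
- apply Hölder regularity, since $n+4$ exceeds the parabolic dimension;
- treat the equation as linear parabolic with coefficients $(\delta+h)^{-1}$ and apply Schauder estimates;
- differentiate and iterate.

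Scaling back gives the factor $t^{-k/2}$.
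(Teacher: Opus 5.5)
Your construction of the solution and your proofs of (\ref{eq:RDTFXest}) and (\ref{eq:RDTFderivests}) rebuild the Koch--Lamm fixed-point argument that the paper simply cites from \cite[Theorem 4.3]{KochLamm12}. Lemmas \ref{lemma:homogeneous}, \ref{lemma:hard} and \ref{lemma:easy} in the appendix are weighted versions of your linear, Duhamel and quadratic estimates, and that part of your proposal is fine.

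The gap is in the initial condition (\ref{eq:RDTFinitialcondition}). For the Duhamel term to vanish as $t\searrow0$, you need the gradient part of the $X$-norm at scales $r\le\sqrt t$,
\[
G_t(h):=\sup_{x\in\R^n}\sup_{0<r\le\sqrt t}\Big(r^{-n/2}\|\nabla h\|_{L^2(B(x,r)\times(0,r^2))}+r^{\frac{2}{n+4}}\|\nabla h\|_{L^{n+4}(B(x,r)\times(r^2/2,r^2))}\Big),
\]
to tend to $0$. Because the estimates are local in scale and $\|h\|_X\lesssim\varepsilon$, the fixed-point identity gives $G_t(h)\le 2G_t(e^{s\Delta}h_0)$. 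The right side does tend to $0$ when $h_0$ is uniformly continuous.

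However, the lemma is stated for arbitrary continuous $g_0$, and a bounded continuous (even smooth) tensor on $\R^n$ need not be uniformly continuous. For $g_0=(1+\tfrac{\varepsilon}{2}\sin(|x|^2))\delta$, the oscillation on scale $|x|^{-1}$ near infinity keeps $G_t(e^{s\Delta}h_0)\gtrsim\varepsilon$ for every $t>0$. Your proposed decomposition cannot fix this, for two reasons:
\begin{itemize}
\item Uniformly continuous functions are closed under uniform limits. So a splitting $h_0=f+e$ with $f$ uniformly continuous and $\|e\|_{L^\infty}$ arbitrarily small exists only if $h_0$ was uniformly continuous to begin with.
\item If $e$ is small only near a compact set, the Lipschitz dependence of the flow on the data is measured in the global $L^\infty$ and $X$ norms, so it yields no local conclusion.
\end{itemize}

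What is missing is a localization estimate for the nonlinear flow: on a compact set and for short times, the solution must be insensitive to data far away. One way to get this is the exponentially weighted estimate of Theorem \ref{thm:weighteddifference}. Take $\chi\equiv1$ on $B(0,4)$, $\psi=\min\{d(\cdot,B(0,1)),3\}$, and let $g_t'$ be the flow from $\chi g_0+(1-\chi)\delta$. The theorem then gives
\[
\sup_{B(0,1)\times(0,T)}|g_t-g_t'|\le c(n)e^{-3/\sqrt T}\|g_0-\delta\|_{C^0(\R^n)}.
\]
Since $\chi g_0+(1-\chi)\delta$ is uniformly continuous, your argument applies to $g_t'$. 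Translation and parabolic rescaling (Remark \ref{rmk:parabolicrescaling}) then handle any compact set.

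The paper does not carry this out itself; it defers exactly this convergence statement to \cite[Corollary 3.7]{PBG19} and \cite[Lemma 2.7]{PBG26}. Note that for the $C^0$-asymptotically flat metrics of Theorem \ref{thm:C0PMT}, $g\to\delta$ at infinity forces uniform continuity, so your argument as written already suffices there.
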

\begin{proof}
The existence of a smooth solution $(g_t)_{t>0}$ and the estimates (\ref{eq:RDTFXest}) and (\ref{eq:RDTFderivests}) are due to \cite[Theorem $4.3$]{KochLamm12}. That the solution converges to the initial data as $t\searrow 0$ follows in the same way as in the proof of \cite[Corollary $3.7$]{PBG19} (also see \cite[Lemma 2.7]{PBG26}).
\end{proof}
\begin{remark}\label{rmk:parabolicrescaling}
If $g_t$ is a Ricci-DeTurck flow starting from $g_0$ in the sense of Lemma \ref{lemma:KL}, then the parabolically rescaled flow $\hat g_t(x):= g_{r^2t}(rx)$ is a Ricci-DeTurck flow in the sense of Lemma \ref{lemma:KL} starting from $\hat g_0(x):= g_0(rx)$, that is, $\hat g_t$ solves (\ref{eq:RDTFeucl}) and satisfies (\ref{eq:RDTFinitialcondition}), (\ref{eq:RDTFXest}), and (\ref{eq:RDTFderivests}) with $g_0$ replaced by $\hat g_0$.
\end{remark}

Henceforth we use the following definition:
\begin{definition}\label{def:C0AFRn}
Let $g$ be a $C^0$ Riemannian metric on $\R^n$. We say that $g$ is asymptotically flat with decay rate $\tau$ if there exists some $c_0 >0$ such that for all $x\in \R^n\setminus \overline{B(0,1)}$ we have
\begin{equation}\label{eq:C0AFisolated}
|g_{ij} - \delta_{ij}|(x) \leq c_0 |x|^{-\tau}.
\end{equation}
\end{definition}
\begin{remark}
For a more complete $C^0$ analogy with the classical asymptotically flat setting, one could also allow metrics that satisfy (\ref{eq:C0AFisolated}) after pushing forward by a diffeomorphism defined $\R^n\setminus K\to \R^n\setminus \overline{B(0,1)}$ for some compact set $K$. For simplicity we do not consider that case in this paper.
\end{remark}

\begin{lemma}\label{lemma:RDTFC0AF}
 Let $\varepsilon$ be as in Lemma \ref{lemma:KL}. There exists $\varepsilon' \leq \varepsilon$ depending only on $n$ and $\tau_0 \in (\tfrac{n-2}{2}, n-2)$ such that the following is true: suppose $g$ is a $C^0$-asymptotically flat Riemannian metric on $\R^n$ with decay rate $\tau \in (\tfrac{n-2}{2}, \tau_0]$ such that $|| g- \delta||_{C^0(\R^n)}< \varepsilon'$. Let $(g_t)_{t>0}$ denote the Ricci-DeTurck flow starting from $g$ whose existence is given by Lemma \ref{lemma:KL}. Then there exists $C= C(n)$ and $k = k(n,\tau_0)$ such that for all $t>0$,
\begin{equation*}
|| \rho_t^{\tau} (g_t - \delta)||_{L^\infty(\R^n)} \leq C||\rho_0^{\tau} (g-\delta)||_{L^\infty(\R^n)},
\end{equation*}
where $C_n$ depends only on $n$ and $\rho_t(x) = \sqrt{1 + |x|^2 + kt}$. In particular, $g_t$ is $C^0$-asymptotically flat with decay rate $\tau$ for all $t>0$.
\end{lemma}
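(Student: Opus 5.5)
We will prove the weighted estimate by working directly with the integral equation (\ref{eq:integraleeq}) for $h_t = g_t - \delta$. Set $A := ||\rho_0^{\tau} h_0||_{L^\infty}$. By (\ref{eq:C0AFisolated}) together with $||h_0||_{C^0}<\varepsilon'$, this quantity is finite. We introduce the weighted norm
\begin{equation*}
N(T) := \sup_{0<t<T}||\rho_t^\tau h_t||_{L^\infty},
\end{equation*}
together with weighted versions of the gradient terms in $||\cdot||_X$. Using (\ref{eq:RDTFderivests}), the natural choice is $\sup_t \sqrt{t}\,||\rho_t^\tau \nabla h_t||_{L^\infty}$, possibly also weighted $L^2$ Carleson-type quantities. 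The aim is a bootstrap inequality of the form $N \le C_1 A + C_2 \varepsilon' N$. Taking $\varepsilon'$ small then gives $N\le 2C_1A$.

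\textbf{Step 1 (linear term).} We show that $\rho_t(x)^\tau \int \Phi(x,t;y,0)|h_0(y)|\,dy \le C A$. We bound $|h_0(y)| \le A\rho_0(y)^{-\tau}$. Convolving $(1+|y|^2)^{-\tau/2}$ with the heat kernel at time $t$ gives a quantity comparable to $(1+|x|^2+t)^{-\tau/2}$, because $\tau<n$, so the weight is locally integrable at scale $\sqrt t$ and decays. Choosing $k\ge 1$ only changes constants.

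\textbf{Step 2 (nonlinear terms).} We bound $Q^0=\nabla h*\nabla h$ and $Q^1=h*\nabla h$. In each product, one factor is controlled unweighted using Lemma \ref{lemma:KL}: $|h|\le c\varepsilon'$ and $|\nabla h|\le c\varepsilon'/\sqrt s$. The other factor carries the weight $\rho_s^{-\tau}$ times $N$ or the weighted gradient norm. This gives a bound for the Duhamel integral of the form
\begin{equation*}
\varepsilon' N\int_0^t\int \bigl(|\Phi|\,s^{-1}+|\nabla\Phi|\,s^{-1/2}\bigr)\rho_s(y)^{-\tau}\,dy\,ds .
\end{equation*}
The $|\nabla\Phi| s^{-1/2}$ piece integrates to $\int_0^t (t-s)^{-1/2}s^{-1/2}\,ds = O(1)$ times $\rho_t(x)^{-\tau}$. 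For this we need $\rho_s(y)^{-\tau}$ averaged against the kernel to be $\lesssim \rho_t(x)^{-\tau}$ for $s<t$. This is where the $kt$ term in $\rho_t$ helps: when $s\ll t$, spatial spreading by $\sqrt{t-s}$ compensates.

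The $|\Phi|s^{-1}$ piece is log-divergent at $s=0$. This is \textbf{the main obstacle}, and we handle it exactly as Koch--Lamm do. Instead of pointwise $1/s$ bounds, we use the $L^2$ and $L^{n+4}$ gradient controls from $||\cdot||_X$, localized and weighted on parabolic cylinders $B(x,r)\times(0,r^2)$. Concretely, we prove a weighted analogue of \cite[Theorem 4.3]{KochLamm12}: define a norm $||h||_{X_\tau}$ by multiplying each piece of $||\cdot||_X$ on the cylinder at $(x,r)$ by $\rho_{r^2}(x)^\tau$. We then check that the Koch--Lamm linear and bilinear estimates hold in $X_\tau$, with one factor measured in $X$ (small, by (\ref{eq:RDTFXest})) and one in $X_\tau$. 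The weight is nearly constant on such cylinders, since $\rho$ varies by a bounded factor on $B(x,r)$ when $r^2\le \rho_{r^2}(x)^2/k$, and larger cylinders are handled by the heat-kernel tail decay. This is where $\tau_0<n-2$ is needed: the off-diagonal tails of $\Phi$ and $\nabla\Phi$ summed against $\rho^{-\tau}$ must converge, which gives constants uniform for $\tau\le\tau_0$. The constant $k$ is chosen large depending on $n$ and $\tau_0$ so that these comparisons close.

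\textbf{Step 3 (closing the argument).} To avoid assuming a priori that $N<\infty$, we run the bootstrap with a truncated weight $\min(\rho_t,R)^\tau$, obtain bounds uniform in $R$, and let $R\to\infty$. Alternatively, we construct the solution by Koch--Lamm's fixed-point iteration directly in the intersection $X\cap X_\tau$, and use uniqueness in $X$ to identify it with the flow of Lemma \ref{lemma:KL}. Either way we obtain $\varepsilon'(n,\tau_0)$ and $C$ as claimed. The final assertion follows because $\rho_t(x)\ge |x|$.
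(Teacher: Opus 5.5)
\textbf{The weighted norm you propose is infinite for part of the allowed range of $\tau$, so Step 2 does not close as written.} You weight every piece of $\|\cdot\|_X$ on the cylinder $B(x,r)\times(0,r^2)$ by $\rho_{r^2}(x)^\tau$, and justify this by saying the weight is nearly constant there when $r^2\le \rho_{r^2}(x)^2/k$.

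That condition is vacuous, since $\rho_{r^2}(x)^2=1+|x|^2+kr^2\ge kr^2$ always. The weight is in fact \emph{not} comparable on these cylinders once $r\gg 1+|x|$. Such a cylinder reaches down to $s=0$ near the origin, where $\rho_s(y)\approx 1$, while $\rho_{r^2}(x)\approx \sqrt{k}\,r$. These cylinders belong to the supremum defining the norm, so heat-kernel tails cannot dispose of them.

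Concretely, finiteness of the weighted $L^2$ piece at $x=0$ forces $\int_0^{r^2}\int_{B(0,r)}|\nabla h|^2\lesssim r^{n-2\tau}$. Now take $\tau>n/2$, which is allowed when $n\ge 5$ because then $n/2<n-2$. Let $u=\Phi*h_0$ be the linear heat flow of any nonzero compactly supported $h_0$. Then $\int_0^\infty\int_{\R^n}|\nabla u|^2=\tfrac12\|h_0\|_{L^2}^2>0$, so $\rho_{r^2}(0)^\tau r^{-n/2}\|\nabla u\|_{L^2(B(0,r)\times(0,r^2))}\sim r^{\tau-n/2}\to\infty$.

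Consequently the claimed linear estimate in $X_\tau$ is false in this range. The fixed-point iteration in $X\cap X_\tau$ breaks down for the same reason. The truncated bootstrap with $\min(\rho_t,R)^\tau$ fails too, because the truncated norm grows like $R^{\tau-n/2}$ and cannot be bounded uniformly in $R$.

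\textbf{A repair.} Weight the \emph{square} of the Carleson quantity instead: bound $\rho_{r^2}(x)^\tau r^{-n}\|\nabla h\|^2_{L^2(B(x,r)\times(0,r^2))}$ by $\varepsilon' N$. This follows from the local energy inequality, using $|h|^2\le \varepsilon' N\rho^{-\tau}$ and $\int_{B(x,2r)}\rho_s^{-\tau}\,dy\lesssim r^n\rho_{r^2}(x)^{-\tau}$, which holds because $\tau<n$. This is exactly what the $Q^0$ term needs after covering at scale $\sqrt t$. The $Q^1$ term only needs the unweighted $L^2$ bound times the weighted $L^\infty$ bound.

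\textbf{Where $\tau_0<n-2$ really enters.} Your explanation is off: in a Duhamel argument the kernel tails converge for every $\tau<n$. In the paper the restriction comes from a different and much shorter route. Write the flow in non-divergence form $\partial_t h-g_t^{ij}\partial_i\partial_j h=\bar Q[h]$ with $|\bar Q|\le C|\nabla h|^2$. Then:
\begin{itemize}
\item $\rho_t^{-\tau}$ is a supersolution. This uses $\tau+2<n$, i.e.\ superharmonicity of $|x|^{-\tau}$, and forces $k$ to be \emph{small}, not large.
\item $e^{\kappa\sqrt{|h|^2+\eta^2}}-e^{\kappa\eta}$ is a subsolution, because the exponential absorbs the quadratic gradient term.
\end{itemize}
The comparison principle then gives the estimate with no Carleson norms and a constant depending only on $n$.
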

\begin{remark}
Note that if $g$ is $C^0$-asymptotically flat with decay rate greater than or equal to $n-2$, then certainly $g$ is also $C^0$-asymptotically flat with decay rate $\tau$ for any $\tau \in (\tfrac{n-2}{2}, n-2)$.
\end{remark}

\begin{proof}
Let $\varepsilon' = \varepsilon$. We decrease $\varepsilon'$ as needed in the course of the proof. Note that the coefficient matrix $g_t^{ij}$ satisfies $||g_t^{ij} - \delta^{ij}||_{L^\infty(\R^n\times (0,\infty))} \leq C_n\varepsilon'$ by Lemma \ref{lemma:KL} and the definition of $||\cdot||_X$. As in the statement of the theorem, fix $\tau_0\in (\tfrac{n-2}{2}, n-2)$ and $\tau \in (\tfrac{n-2}{2}, \tau_0]$ and let $\rho_t(x) = \sqrt{1 + |x|^2 + kt}$, where $k$ will be chosen later. We show that $\rho_t^{-\tau}$ is a supersolution of $\partial_t u - g_t^{ij}\partial_i\partial_j u = 0$. We have
\begin{equation*}
\partial_i \rho_t(x) = \frac{x^i}{\rho_t(x)}
\end{equation*}
so
\begin{equation*}
\partial_i\partial_j \rho_t(x) = \frac{\delta_{ij}}{\rho_t(x)} - \frac{x^ix^j}{\rho_t(x)^3}
\end{equation*}
and
\begin{align*}
\Delta^{\delta} \rho_t(x)^{-\tau} &= \sum_{i=1}^{n}\partial_i \left[ (-\tau )\rho_t(x)^{-\tau -1} \partial_i\rho_t(x)\right]
\\&= \sum_{i=1}^{n}(-\tau )(-\tau  - 1)\rho_t(x)^{-\tau  - 2}(\partial_i \rho_t(x))^2 + (-\tau )\rho_t(x)^{-\tau  - 1}\partial_i^2\rho_t(x)
\\&= (-\tau )(-\tau  - 1)\rho_t(x)^{-\tau  - 2}\frac{|x|^2}{\rho_t(x)^2} + (-\tau )\rho_t(x)^{-\tau  - 1}\left[ \frac{n}{\rho_t(x)} - \frac{|x|^2}{\rho_t(x)^3}\right]
\\& = (-\tau )\rho_t(x)^{-\tau  - 4}\left[ (-\tau  - 1)|x|^2 + n\rho_t(x)^2 - |x|^2  \right]
\end{align*}

Also
\begin{align*}
\partial_t \rho_t(x)^{-\tau } &= (-\tau )\rho_t(x)^{-\tau  - 1}\frac{k}{2\rho_t(x)}
\end{align*}
so
\begin{align*}
(\partial_t - \Delta^\delta) \rho_t(x)^{-\tau } &= (-\tau )\rho_t(x)^{-\tau  - 1}\frac{k}{2\rho_t(x)} - (-\tau )\rho_t(x)^{-\tau  - 4}\left[ (-\tau  - 1)|x|^2 + n\rho_t(x)^2 - |x|^2  \right]
\\&= (-\tau )\rho_t(x)^{-\tau  - 4}\left[ \frac{k}{2}\rho_t(x)^2 + (\tau  + 1)|x|^2 - n\rho_t(x)^2 + |x|^2 \right]
\\&= (-\tau )\rho_t(x)^{-\tau  - 4}\left[(k/2 - n)\rho_t(x)^2 + (\tau  + 2)|x|^2  \right].
\end{align*}

Therefore,
\begin{align*}
(\partial_t - g^{ij}\partial_i\partial_j) \rho_t(x)^{-\tau } &= (\partial_t - \Delta^\delta)\rho_t(x)^{-\tau } + (\Delta^\delta - g^{ij}\partial_i\partial_j)\rho_t(x)^{-\tau }
\\& \geq (-\tau )\rho_t(x)^{-\tau  - 4}\left[(k/2 - n)\rho_t(x)^2 + (\tau  + 2)|x|^2  \right] + c_n\varepsilon'(-\tau )\rho_t(x)^{-\tau  -2}
\\& = (-\tau )\rho_t(x)^{-\tau  - 4} \left[(k/2 - n + c_n \varepsilon')\rho_t(x)^2 + (\tau  + 2)|x|^2  \right] \geq 0
\end{align*}
where in the last step we are using that $\tau  + 2 < n$, so there exists some small positive $k = k(n, \tau)$ such that $k/2 - n + c_n\varepsilon' + \tau + 2 <0$, and $\rho_t(x) \geq |x|$. 

Now note that by \cite[(4.3)]{KochLamm12}, if $|| g_t - \delta||_{L^\infty(\R^n\times(0,\infty))} < 1/2$, we then have, for $h_t = g_t - \delta$, that 
\begin{equation}
\partial_t h_t - g_t^{ij}\partial_i\partial_j h_t = \bar Q[h_t],
\end{equation}
where 
\begin{equation}\label{eq:forcingbound}
|\bar Q[h_t]| \leq C_n|\nabla h_t|^2.
\end{equation}
 Let $\kappa = 2C_n$ (for the particular value of $C_n$ in (\ref{eq:forcingbound})). Reduce $\varepsilon'$ as needed so that by Lemma \ref{lemma:KL}, $||h_t||_{L^\infty(\R^n\times(0,\infty))} \leq c_n \varepsilon' < \min\{\tfrac{1}{2}, \tfrac{1}{4C_n}\}$ and for all $\xi \in \R^n, x\in \R^n, t>0$, $g_t^{ij}\xi_i\xi_j \geq \tfrac{1}{2}|\xi|^2$. For $\eta>0$, let $F_\eta(x,t) := e^{\kappa \sqrt{|h_t|^2 + \eta^2}} - e^{\kappa \eta}$. We will show that $F_\eta$ is a subsolution of $\partial_t u - g_t^{ij}\partial_i\partial_j u =0$ for all sufficiently small $\eta$. First, let $a_\eta = \sqrt{|h|^2 + \eta^2}$. Then 
\begin{align*}
(\partial_t - g_t^{ij}\partial_i\partial_j) a_\eta &= \frac{\langle h, \partial_t h \rangle_\delta}{a_\eta} - g_t^{ij}\left(\frac{\langle \partial_i h, \partial_i j\rangle_\delta}{a_\eta} + \frac{\langle h, \partial_i \partial_j h\rangle_\delta}{a_\eta} - \frac{\langle h, \partial_j h\rangle_\delta\langle h, \partial_i h\rangle_\delta}{a_\eta^3}\right)
\\&= \frac{\langle h, (\partial_t - g_t^{ij}\partial_i \partial_j)h \rangle_\delta}{a_\eta} -\frac{g_t^{ij}\langle \partial_i h, \partial_j h\rangle_\delta}{a_\eta} + \frac{g_t^{ij}(\partial_j a_\eta)(\partial_i a_\eta)}{a_\eta}.
\end{align*}

Therefore, we have
\begin{align*}
(\partial_t - g_t^{ij}\partial_i\partial_j)F_\eta &= \kappa e^{\kappa a_\eta}\left(\partial_t a_\eta - g_t^{ij}\partial_i\partial_j a_\eta \right) - \kappa^2e^{\kappa a_\eta}g_t^{ij}(\partial_i a_\eta)(\partial_j a_\eta)
\\&= \kappa e^{\kappa a_\eta}\left[\frac{\langle h, \bar Q[h] \rangle_\delta}{a_\eta} -\frac{g_t^{ij}\langle \partial_i h, \partial_j h\rangle_\delta}{a_\eta} + \frac{g_t^{ij}(\partial_j a_\eta)(\partial_i a_\eta)}{a_\eta}\right] - \kappa^2e^{\kappa a_\eta}g_t^{ij}(\partial_i a_\eta)(\partial_j a_\eta)
\\&\leq \kappa e^{\kappa a_\eta}\left[|\bar Q[h]| -\frac{g_t^{ij}\langle \partial_i h, \partial_j h\rangle_\delta}{a_\eta} + \frac{g_t^{ij}(\partial_j a_\eta)(\partial_i a_\eta)}{a_\eta} \right] - \kappa^2e^{\kappa a_\eta}g_t^{ij}(\partial_i a_\eta)(\partial_j a_\eta)
\\& \leq \kappa e^{\kappa a_\eta}\left[\kappa g_t^{ij}\langle \partial_i h, \partial_j h\rangle_\delta -\frac{g_t^{ij}\langle \partial_i h, \partial_j h\rangle_\delta}{a_\eta} + \frac{g_t^{ij}(\partial_j a_\eta)(\partial_i a_\eta)}{a_\eta} - \kappa g_t^{ij}(\partial_i a_\eta)(\partial_j a_\eta) \right]
\\&= \kappa e^{\kappa a_\eta}\left(\kappa - \frac{1}{a_\eta}\right)\left(g_t^{ij}\langle \partial_i h, \partial_j h\rangle_\delta - g_t^{ij}(\partial_j a_\eta)(\partial_i a_\eta) \right)
\end{align*}
where in third step we are using Cauchy--Schwarz and the fact that $|h| \leq a_\eta$, and in the fourth step we are using (\ref{eq:forcingbound}), the definition of $\kappa$, and the fact that $g_t$ is uniformly bilipschitz to $\delta$.

 Therefore, $F_\eta$ is a subsolution of $\partial_t u - g_t^{ij}\partial_i\partial_j u =0$ for small $\eta$ provided that we can show $\kappa a_\eta < 1$ for sufficiently small $\eta$ and that $g_t^{ij}\langle \partial_i h, \partial_j h\rangle_\delta - g_t^{ij}(\partial_j a_\eta)(\partial_i a_\eta) \geq 0$. Towards the first inequality, note that for $\eta < \varepsilon'$ we have
\begin{equation*}
\kappa a_\eta = 2C_n\sqrt{|h|^2 + \eta^2} \leq 2C_n \sqrt{(c_n \varepsilon')^2 + \eta^2} < 1
\end{equation*}
after reducing $\varepsilon'$ as needed depending on $n$. 
Towards the second, consider the matrix $B_{ij} = \langle\partial_i h, \partial_j h\rangle_\delta - (\partial_j a_\eta)(\partial_i a_\eta)$. Then for any $\xi \in \R^n$, Cauchy--Schwarz implies
\begin{align*}
B_{ij}\xi^i\xi^j &= \sum_{i,j=1}^{n} \xi^i\xi^j\langle\partial_i h, \partial_j h\rangle_\delta - \sum_{i,j=1}^n \xi^i\xi^j (\partial_j a_\eta)(\partial_i a_\eta)
\\&= \sum_{i,j=1}^{n} \xi^i\xi^j\langle\partial_i h, \partial_j h\rangle_\delta - \sum_{i,j=1}^n \xi^i\xi^j \frac{\langle\partial_i h, h\rangle_\delta}{a_\eta}\frac{\langle\partial_j h, h\rangle_\delta}{a_\eta}
\\& =  \left| \sum_{i=1}^{n}\xi^i\partial_i h\right|_\delta^2 - \frac{1}{a_\eta^2}\bigg \langle \sum_{i=1}^n\xi^i\partial_i h, h \bigg\rangle_\delta^2
\\& \geq \left| \sum_{i=1}^{n}\xi^i\partial_i h\right|_\delta^2 - \frac{|h|^2}{a_\eta^2}\bigg| \sum_{i=1}^n \xi^i \partial_i h \bigg|_\delta^2
\\& \geq 0,
\end{align*}
where in the last step we are using that $|h| \leq a_\eta$. In particular, the matrix $B$ is positive semi-definite, so 
\begin{equation*}
g_t^{ij}\langle \partial_i h, \partial_j h\rangle_\delta - g_t^{ij}(\partial_j a_\eta)(\partial_i a_\eta) = g_t^{ij}B_{ij} = \tr([g_t]^{-1}B) \geq 0.
\end{equation*}
This shows that $F_\eta$ is a subsolution of $\partial_t u - g_t^{ij}\partial_i\partial_j u =0$ for small $\eta$. Since $F_\eta$ is a subsolution and $C' \rho^{-\tau}$ is a supersolution of $\partial_t u - g_t^{ij}\partial_i\partial_j u =0$ for any $C'>0$, we will see below that the comparison principle implies, for all $t>0$, $x\in \R^n$ and $\eta$ sufficiently small,
\begin{equation*}
F_\eta(x,t) \leq C'\rho_t^{-\tau}(x),
\end{equation*}
provided that $C'$ is chosen so that 
\begin{equation*}
F_\eta(x, 0) \leq C' \rho_0^{-\tau}(x)
\end{equation*}
for all $x\in \R^n$. To achieve this latter condition, let
\begin{equation*}
C' = \kappa e^{2\kappa \varepsilon'}|| \rho_0^{\tau}h_0||_{L^\infty(\R^n)}.
\end{equation*}
Reduce $\varepsilon'$ as needed so that $\sqrt{2\varepsilon '} < 1$. Then, for $\eta \leq \varepsilon'$ we have by the Mean Value Theorem
\begin{align*}
F_\eta(x, 0) & = \exp\left(\kappa \sqrt{|h_0|^2 + \eta^2}\right) - e^{k\eta}
\\& \leq \kappa \exp\left(\kappa \sqrt{2(\varepsilon')^2}\right)(\sqrt{|h_0(x)|^2 + \eta^2} - \eta)
\\& \leq \kappa \exp\left(\kappa 2\varepsilon'\right)|h_0(x)| \leq C' \rho_0^{-\tau }(x).
\end{align*}
Then one may apply the weak maximum principle \cite[Chapter 2, Section 4, Theorem 9]{Friedman64} to $C'\rho_t^{-\tau}(x) - F_\eta(x)$ to find that for $\eta$ sufficiently small and all $x\in \R^n$, $t>0$, we have
\begin{equation*}
F_\eta(x,t) \leq C'\rho_t^{-\tau}(x).
\end{equation*}

 Fixing $x\in \R^n$ and $t>0$ and letting $\eta\to 0$, we find that
\begin{align*}
\sup_{x\in \R^n, t>0} \rho_t^{\tau}(x) (e^{\kappa |h(x, t)|} - 1) & \leq c_n || \rho_0^{\tau}h_0||_{L^\infty(\R^n)}.
\end{align*}
Moreover, for any $x\in \R^n$ and $t>0$, we have $\kappa |h(x,t)| \leq e^{\kappa |h(x, t)|} - 1$ so
\begin{equation*}
\sup_{x\in \R^n, t>0} \rho^{\tau}(x) |h(x,t)| \leq c_n || \rho_0^{\tau}h_0||_{L^\infty(\R^n)}.
\end{equation*}
This proves the result. 
\end{proof}

\begin{corollary}\label{cor:smoothdecay}
Let $\varepsilon'$ be as in Lemma \ref{lemma:RDTFC0AF}. For $n\geq 3$ and $\tau_0 \in (\tfrac{n-2}{2}, n-2)$ there exists $\varepsilon'' = \varepsilon''(n, \tau_0) \leq \varepsilon'$ for which the following is true:

Let $g$ be a $C^0$-asymptotically flat Riemannian metric on $\R^n$ with decay rate $\tau \in (\tfrac{n-2}{2}, \tau_0]$, such that $|| g - \delta||_{L^\infty(\R^n)} \leq \varepsilon''$. Let $g_t$ be the Ricci-DeTurck flow starting from $g$ in the sense of Lemma \ref{lemma:KL}. Then for $m= 1, 2, 3, 4$,  there exists $c = c(n, m, || \rho_0^\tau (g-\delta)||_{L^\infty(\R^n)}, \tau)$,  where $\rho_t$ is as in Lemma \ref{lemma:RDTFC0AF}, such that for all $x \in \R^n$ and all $t>0$ we have
\begin{equation*}
|\nabla^m g_t|(x) \leq \frac{c_m \rho_t(x)^{-\tau}}{t^{m/2}} \leq  \frac{c_m |x|^{-\tau}}{t^{m/2}}.
\end{equation*}
\end{corollary}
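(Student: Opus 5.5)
The plan is to combine the weighted $C^0$ decay of Lemma \ref{lemma:RDTFC0AF} with local interior parabolic estimates for the Ricci-DeTurck flow, applied on parabolic cylinders of scale comparable to $\sqrt{t}$. We fix $(x_0,t_0)$ with $t_0>0$ and set $r=\sqrt{t_0/2}$. We then parabolically rescale as in Remark \ref{rmk:parabolicrescaling}, putting $\hat g_s(y) := g_{t_0/2 + r^2 s}(x_0 + r y)$ for $s\in[0,1]$. This $\hat g$ solves (\ref{eq:RDTFeucl}) on $B(0,1)\times[0,1]$. By Lemma \ref{lemma:KL} it satisfies $\|\hat g_s - \delta\|_{C^0}\le c\varepsilon''$ and has all derivatives bounded by $c_k\varepsilon''$ for $s\ge 0$, since the rescaled flow starts at the positive time $t_0/2$.

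The key quantity is the local oscillation. On the cylinder $B(x_0,r)\times[t_0/2,t_0]$, Lemma \ref{lemma:RDTFC0AF} gives
\[
|g_t-\delta|\le C\|\rho_0^\tau(g-\delta)\|_{L^\infty}\,\rho_t^{-\tau}.
\]
For $|x-x_0|\le r$ and $t\in[t_0/2,t_0]$ we have $\rho_t(x)\ge c(n,k)\rho_{t_0}(x_0)$. This holds because $1+|x|^2+kt \ge \tfrac12(1+|x_0|^2) - r^2 + kt_0/2$, and $r^2=t_0/2$ is absorbed by $kt$ plus $|x_0|^2$ after a case split according to whether $|x_0|\ge 2r$ or $|x_0|<2r$. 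In the second case $\rho_{t_0}(x_0)^2\lesssim 1+ t_0(4+k)$, which is controlled by $1+kt_0/2$ up to a constant depending on $k=k(n,\tau_0)$. Hence $\Lambda:=\sup_{\text{cyl}}|\hat g-\delta| \le c\,\rho_{t_0}(x_0)^{-\tau}$, with $c$ depending on $\|\rho_0^\tau(g-\delta)\|$, $n$ and $\tau_0$.

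Next we write $\hat h=\hat g-\delta$. It solves $\partial_s \hat h - \hat g^{ij}\partial_i\partial_j\hat h = \bar Q[\hat h]$ with $|\bar Q|\le C|\nabla \hat h|^2$, and it has a priori bounds $|\nabla^k\hat h|\le c_k\varepsilon''$. We view this as a linear equation for $\hat h$ whose coefficients $\hat g^{ij}$ are smooth with uniformly bounded derivatives. The quadratic term we write as $\bar Q = \nabla\hat h * \nabla \hat h$, that is, as a linear first-order term $b\cdot\nabla\hat h$ with $|b|\lesssim |\nabla \hat h|\le c\varepsilon''$, smooth and with bounded derivatives. Interior Schauder estimates on $B(0,1)\times[0,1]$ then give $|\nabla^m\hat h|(0,1)\le C_m\sup_{\text{cyl}}|\hat h|\le C_m\Lambda$ for $m\le 4$. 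Scaling back, $|\nabla^m g_{t_0}|(x_0) = r^{-m}|\nabla^m\hat h|(0,1)\le c_m t_0^{-m/2}\rho_{t_0}(x_0)^{-\tau}$. The second inequality in the statement follows from $\rho_t(x)\ge|x|$.

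The main obstacle is making the linearization legitimate. The Schauder constants must depend only on bounds that are uniform in $(x_0,t_0)$, so they cannot depend on $\Lambda$ itself. This is where Lemma \ref{lemma:KL}'s scale-invariant derivative bounds (\ref{eq:RDTFderivests}) enter: they give $C^{k,\alpha}$ control of $\hat g^{ij}$ and $b$ uniformly. If one prefers to avoid linearizing the quadratic term, an alternative is to bootstrap with the smallness $\varepsilon''$, estimating $\|\nabla\hat h\|^2\le c\varepsilon''\|\nabla\hat h\|$ and absorbing it into the left side. This route is likely where the reduction from $\varepsilon'$ to $\varepsilon''$ is needed.
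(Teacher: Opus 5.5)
Your proposal is correct and follows the same architecture as the paper's proof. Both work on a parabolic cylinder of radius comparable to $\sqrt{t}$ ending at time $t$, bound $t^{m/2}|\nabla^m g_t|$ by the supremum of $|g-\delta|$ over that cylinder, and control that supremum by Lemma \ref{lemma:RDTFC0AF} together with the comparability of $\rho$ across the cylinder.

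The genuine difference is where the local derivative estimate comes from.

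\emph{The paper's route.} It cites \cite[Proposition 2.5]{Bamler14}, a nonlinear interior estimate for Ricci--DeTurck perturbations of Euclidean space. That estimate needs only $C^0$-smallness of $h$ on the cylinder, and its threshold $\varepsilon_2$ is exactly why $\varepsilon''$ is introduced.

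\emph{Your route.} You freeze coefficients instead. On the rescaled cylinder, which starts at the positive time $t_0/2$, the global bounds (\ref{eq:RDTFderivests}) make $\hat g^{ij}$ and $b=\hat g^{-1}*\hat g^{-1}*\nabla\hat h$ uniformly bounded in every $C^{k,\alpha}$. Linear interior Schauder theory for a system with scalar principal part then gives the estimate. You should state explicitly the usual interpolation and absorption step that leaves only $\sup|\hat h|$ on the right-hand side, since the components are coupled through $b\cdot\nabla\hat h$.

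\emph{What each buys.} Your argument is self-contained given Lemma \ref{lemma:KL} and needs no smallness beyond $\varepsilon'$. So your closing guess about where $\varepsilon''$ enters describes the paper's route, not yours. The cost is that you rely on global a priori derivative bounds rather than a purely local hypothesis on the cylinder. Bamler's estimate has that local hypothesis, which makes it more robust, for example for flows known only locally.

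Your case split ($|x_0|\geq 2r$ versus $|x_0|<2r$) for $\rho_s(y)\geq c(n,\tau_0)\,\rho_{t}(x)$ is also worth keeping. It supplies the justification that the paper's one-line chain of inequalities for this step glosses over: the bound $|y|\geq |x|-\sqrt{t}$ used there is only useful when $|x|$ is large compared with $\sqrt{t}$, and the small-$|x|$ regime must be handled through the $kt$ term, as you do.
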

\begin{proof}
Let $\varepsilon'' = \varepsilon_2$ from \cite[Proposition 2.5]{Bamler14}. Fix some $x\in \R^n$ and $t>0$. Note that if $(y,s)\in B(x, \sqrt{t})\times (\tfrac{7}{8}t, \tfrac{9}{8}t)$ we have
\begin{align*}
\rho_s(y)^2 & = 1 + |y|^2 + ks \geq 1 + (|x| - \sqrt{t})^2 + \tfrac{7}{8}kt
\\& \geq 1 + |x|^2 + \frac{7k + 8}{8} \geq c(n,\tau)\rho_t(x)^2.
\end{align*}

By \cite[Proposition 2.5]{Bamler14} and Lemma \ref{lemma:RDTFC0AF} we have
\begin{align*}
\sqrt{t}^m|| \nabla^m g_t||_{B(x, \sqrt{t}/2)\times (\tfrac{7}{8}t, \tfrac{9}{8}t)} & \leq C(n,m)||g_t - \delta ||_{C^0(B(x, \sqrt{t})\times (\tfrac{1}{8}t, \tfrac{9}{8}t))} 
\\& \leq C\sup_{(y,s)\in B(x, \sqrt{t})\times \tfrac{1}{8}t, \tfrac{9}{8}t)} ||\rho_0^\tau (g - \delta)||_{L^\infty(\R^n)}\rho_s(y)^{\tau}
\\& \leq C\rho_t(x)^{-\tau},
\end{align*}
with $C$ adjusted. This proves the result.
\end{proof}

Corollary \ref{cor:smoothdecay} alone is actually not sufficient for our purposes. In order to bound the distortion of the mass along the Ricci-DeTurck flow, we also require the following $W^{1,2}$-estimate which follows from a weighted version of the $X$-norm estimate in Lemma \ref{lemma:KL}, which we will prove in Appendix \ref{appendix:RDTFweightedXnorm}.
\begin{lemma}\label{lemma:RDTFL2distortion}
Let $g$ be a $C^0$-asymptotically flat Riemannian metric on $\R^n$ with decay rate $\tau$ such that $|| g - \delta|| \leq \varepsilon'''$, where $\varepsilon'''$ is the constant $\varepsilon$ from Theorem \ref{thm:weighteddifference}. Let $(g_t)_{t>0}$ be the Ricci-DeTurck flow starting from $g$ in the sense of Lemma \ref{lemma:KL}. Let $0 < \eta < 2$. Then there exists $C = C(n, || |\cdot|^\tau (g-\delta)||_{L^\infty(\R^n)}, \tau, \eta)$ and $\bar r = \bar r(n, \tau, \eta) > 1$ such that for all $r \geq \bar r$ the following is true:

For $x\in A(0, .9r, 1.1r)$ we have
\begin{equation*}
r^{-n/2}||\nabla (g_t - \delta)||_{L^2(B(x,.1r)\times (0, r^{2-\eta}))} \leq C r^{-\tau}.
\end{equation*}
\end{lemma}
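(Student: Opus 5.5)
The plan is to reduce the estimate to a weighted version of the $X$-norm bound of Lemma \ref{lemma:KL}, namely Theorem \ref{thm:weighteddifference} in Appendix \ref{appendix:RDTFweightedXnorm}. I expect that theorem to say the following: for $\|g-\delta\|_{C^0}\le\varepsilon$, the quantity $\|w\,(g_t-\delta)\|_X$ is controlled by $\|w_0(g-\delta)\|_{L^\infty}$ for a suitable space–time weight $w$. The weight should behave like $\rho_t^\tau$ and, in the parabolic region $|x|^2\gg t$, possibly carry an exponential factor. Given such a bound, the local $L^2$ piece of the $X$-norm at center $x$ and scale $s=\sqrt{T}$, with $T=r^{2-\eta}$, controls $s^{-n/2}\|w\nabla h\|_{L^2(B(x,s)\times(0,s^2))}$.

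First I decompose the domain. Since $T=r^{2-\eta}\ll r^2$, we have $s=r^{1-\eta/2}\ll .1r$. So $B(x,.1r)$ can be covered by about $N\sim (r/s)^n$ balls $B(x_k,s)$ with $x_k\in A(0,.8r,1.2r)$. For $r\ge\bar r(n,\tau,\eta)$, every point of these balls at every time $t\le T$ satisfies $|y|\sim r$ and $kt\le kr^{2-\eta}\le r^2$. Hence $\rho_t(y)\sim r$ there, and so $w\gtrsim r^\tau$. Applying the weighted $X$-norm estimate on each small ball gives
\[
\|\nabla h\|_{L^2(B(x_k,s)\times(0,T))}^2\le C\,s^{n}r^{-2\tau}\,\|\,|\cdot|^\tau(g-\delta)\|_{L^\infty}^2,
\]
after absorbing $\|g-\delta\|_{L^\infty}$ near the origin into the weighted norm. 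Summing over $k$ yields
\[
\|\nabla h\|^2_{L^2(B(x,.1r)\times(0,T))}\le C N s^n r^{-2\tau}\le C r^{n}r^{-2\tau}.
\]
Multiplying by $r^{-n}$ and taking square roots gives the claim.

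An alternative is to use scale $r$ directly. The difficulty is that the unweighted $X$-norm at scale $r$ only sees $\sup|h|$, which is not $O(r^{-\tau})$ without localization. This is why the restriction to $t\le r^{2-\eta}$ together with a weight is needed. The localization works because in the region $|y|\sim r$, $t\le r^{2-\eta}$, the heat kernel contributions from $|y|\lesssim r/2$ are exponentially small, of order $e^{-cr^\eta}$, and these are dominated by $r^{-\tau}$ once $r\ge\bar r(\eta)$. This is also where the dependence on $\eta$ in $C$ and $\bar r$ enters.

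The main obstacle is proving the weighted estimate itself, which amounts to redoing the Koch–Lamm fixed-point argument in a weighted $X$-space. The linear part should be manageable: one has to check that the heat semigroup and the Duhamel operators $\int\Phi\,Q^0$ and $\int\nabla^*\Phi\,Q^1$ are bounded with the weight. This uses $w(y)/w(x)\le C e^{c|x-y|/\sqrt t}$-type comparability against Gaussian decay, and it is exactly why an exponential weight of the form $e^{\min(|x|,R)/\sqrt{t}}$ or similar is convenient. The quadratic terms are then estimated as products, with one factor weighted and the other in the unweighted $X$-norm, which is small by Lemma \ref{lemma:KL}. The resulting solution agrees with the Lemma \ref{lemma:KL} flow by uniqueness in the small ball of $X$. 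Within the present lemma, I take Theorem \ref{thm:weighteddifference} as given and carry out only the covering and summation argument above.
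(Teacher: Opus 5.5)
There is a genuine gap. Your argument depends entirely on the estimate you feed into the covering step: a $\rho_t^\tau$-weighted bound on the full $X$-norm of $g_t-\delta$, and in particular on its local $L^2$-gradient part. Theorem \ref{thm:weighteddifference} is not that statement, and nothing in the paper provides it.

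\begin{itemize}
\item The only polynomially weighted information available is the $L^\infty$ bound of Lemma \ref{lemma:RDTFC0AF}. It comes from the maximum principle and says nothing about $\nabla(g_t-\delta)$ in $L^2$.
\item Corollary \ref{cor:smoothdecay} gives $|\nabla g_t|\lesssim \rho_t^{-\tau}t^{-1/2}$, whose square is not integrable at $t=0$.
\item Theorem \ref{thm:weighteddifference} is a comparison estimate at unit scale. For $T<1$, $\lambda=T^{-1/2}$ and $\psi$ $1$-Lipschitz, the flow $g_t'$ from the cut-off data $g'=\chi g+(1-\chi)\delta$ (with $\chi\equiv 1$ on $B(0,4)$ and $\chi\equiv 0$ off $B(0,5)$) satisfies $\|g_t-g_t'\|_{\tilde X_T}\le c(n)\|e^{-\lambda\psi}(g-g')\|_{L^\infty}$.
\end{itemize}

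So, as written, you have deferred the whole analytic content of the lemma to a polynomially weighted Koch--Lamm estimate that you would have to prove from scratch.

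The missing idea is localization by this cut-off flow. It turns your correct heuristic, that far-field contributions are exponentially small for $t\le r^{2-\eta}$, into a proof.

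\begin{enumerate}
\item Rescale parabolically by $.1r$ about $x$. Then $B(x,.1r)\times(0,r^{2-\eta})$ becomes $B(0,1)\times(0,T)$ with $T=100r^{-\eta}<1/2$, and the quantity to bound becomes $\|\nabla \hat h\|_{L^2(B(0,1)\times(0,T))}$.
\item Split $\hat h=\hat h'+(\hat h-\hat h')$.
\item The main term needs no weight at all. By Lemma \ref{lemma:KL}, $\|\hat h'\|_X\le c\|\hat h_0'\|_{L^\infty}\le c\|g-\delta\|_{L^\infty(B(x,.5r))}\le Cr^{-\tau}$, since $B(x,.5r)\subset A(0,.4r,1.6r)$. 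This is where the decay enters.
\item For the difference, take $\psi=\min\{d(\cdot,B(0,1)),3\}$, which vanishes on $B(0,1)$ and equals $3$ wherever $\hat h_0\neq \hat h_0'$. Cover $B(0,1)$ by $\lesssim T^{-n/2}$ balls of radius $\sqrt T$ and apply Theorem \ref{thm:weighteddifference} with $2T$ in place of $T$. This gives a contribution $\lesssim T^{-n/4}e^{-3/\sqrt{2T}}\|g-\delta\|_{L^\infty}$, i.e.\ a power of $r$ times $e^{-cr^{\eta/2}}$. That is below $r^{-\tau}$ once $r\ge \bar r(n,\tau,\eta)$.
\end{enumerate}

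Your small-ball covering does appear in this argument, but only for the exponentially small difference term, where the loss $T^{-n/4}$ is harmless. The main term is handled at the full scale $.1r$ by the unweighted estimate for the localized flow.
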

We prove Lemma \ref{lemma:RDTFL2distortion} in Appendix \ref{appendix:RDTFweightedXnorm}.

\subsection{Nonnegative scalar curvature for $C^0$ metrics}\label{subsec:equivdefs}
We now briefly explain the weak notion of nonnegative scalar curvature that we are using. For a more exhaustive discussion of the development of this notion, we refer the reader to \cite{PBG19}, \cite{PBG20}, and \cite[Definition 2.3]{PBG23}.
\begin{definition}\label{def:RFNNSC}
Let $M^n$ be a smooth manifold and $g$ be a continuous Riemannian metric on $M$. For $\beta \in (0,1/2)$ we say that $g$ has scalar curvature bounded below by $\kappa_0\in \R$ in the $\beta$-weak sense at $x\in M$ if there exists a coordinate chart $\Phi: U_x\to \Phi(U_x)$ for $M$, where $U_x$ is a neighborhood of $x$, and there exists a continuous metric $g_0$ on $\R^n$ and a Ricci-DeTurck flow $(g_t)_{t\in (0,T]}$ for $g_0$, with respect to the Euclidean background metric, satisfying (\ref{eq:RDTFinitialcondition}), (\ref{eq:RDTFXest}), and (\ref{eq:RDTFderivests}), such that
\begin{equation*}
g_0 \big|_{\Phi(U_x)} = \Phi_*g,
\end{equation*}
and such that
\begin{equation}\label{eq:betaweakcondition}
\inf_{C>0}\left(\liminf_{t\searrow 0}\left(\inf_{B_{\delta}(\Phi(x), Ct^{\beta})} R(g_t) \right)\right) \geq \kappa_0.
\end{equation}
We say that $g$ has scalar curvature bounded below by $\kappa_0\in \R$ in the sense of Ricci flow on an open region $U\subset M$ if there exists some $\beta\in (0, 1/2)$ such that, for all $x\in U$, $g$ has scalar curvature bounded below by $\kappa_0$ in the $\beta$-weak sense at $x$.
\end{definition}

\begin{lemma}\label{lemma:RFNNSC}
Let $\varepsilon$ be as in Lemma \ref{lemma:KL} and suppose $g$ is a continuous Riemannian metric on $\R^n$ such that $|| g_0 - \delta||_{C^0(\R^n)} < \varepsilon$. Assume that $g$ has nonnegative scalar curvature in the sense of Ricci flow. If $(g_t)_{t>0}$ is the Ricci-DeTurck flow starting from $g$ whose existence is given by Lemma \ref{lemma:KL} then $R(g_t)\geq 0$ for all $t>0$.
\end{lemma}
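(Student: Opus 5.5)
The proof has two parts. First, show that $R(g_t)$ is bounded below by $0$ asymptotically as $t\searrow 0$ at every point. Then use the evolution equation for scalar curvature along the flow, together with a maximum principle, to propagate nonnegativity to all positive times.

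\textbf{Step 1: localization.} Fix $x\in\R^n$. By Definition \ref{def:RFNNSC} there are a chart $\Phi$ near $x$, a metric $g_0'$ on $\R^n$ agreeing with $\Phi_*g$ near $\Phi(x)$, and a Ricci-DeTurck flow $(g'_t)$ satisfying (\ref{eq:betaweakcondition}) with $\kappa_0=0$. The task is to transfer this condition to our flow $(g_t)$ from Lemma \ref{lemma:KL}. I would invoke the local stability/uniqueness results for $C^0$ Ricci-DeTurck flows from \cite{PBG19}, which are the same type of results that make the notion in \cite[Definition 2.3]{PBG23} independent of the choice of chart and flow. These give the following: two flows whose initial data agree on a neighborhood differ, on balls of radius $Ct^{\beta}$ about the point, by an amount whose scalar curvature difference tends to $0$ as $t\searrow 0$. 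A diffeomorphism-invariance argument handles the change of chart, since in the Ricci flow picture $R$ is a scalar and is carried along by the diffeomorphisms $\chi_t$ of (\ref{eq:diffeoseq}), which move points by at most $o(t^\beta)$ (compare \cite[Lemma 2.3 and Corollary 3.7]{PBG19}). The conclusion of this step is that for every $x$ and every $C>0$,
\begin{equation*}
\liminf_{t\searrow 0}\inf_{B(x,Ct^{\beta})}R(g_t)\geq 0 .
\end{equation*}

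\textbf{Step 2: propagation.} Along (\ref{eq:RDTFeucl}) the scalar curvature satisfies
\begin{equation*}
\partial_t R = \Delta_{g_t}R + 2|\Ric|^2 + \langle X, \nabla R\rangle_{g_t} \geq \Delta_{g_t}R + \langle X,\nabla R\rangle_{g_t}.
\end{equation*}
By (\ref{eq:RDTFderivests}), on $[s,\infty)$ the curvature satisfies $|R|\leq C\varepsilon/s$ and $|X|\leq C\varepsilon/\sqrt{s}$, so all coefficients are bounded there. I would then apply the weak maximum principle on $\R^n\times[s,T]$ to bounded subsolutions (as in \cite[Chapter 2]{Friedman64}) to get
\begin{equation*}
\inf_{\R^n} R(g_t)\geq \inf_{\R^n}R(g_s)\quad\text{for } t\geq s,
\end{equation*}
and afterwards let $s\searrow 0$.

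\textbf{Main obstacle.} Step 1 only controls $R(g_s)$ near each point, non-uniformly in $x$, whereas Step 2 needs a global infimum over $\R^n$. I would resolve this as in \cite{PBG19}, by using a localized heat-kernel argument instead of the global infimum. Represent $R(g_t)(x_0)$ through the conjugate heat kernel $K(x_0,t;\cdot,s)$ of the operator $\partial_t-\Delta_{g_t}-X\cdot\nabla$. Gaussian bounds, valid because $g_t$ is uniformly bilipschitz to $\delta$ and the drift is controlled as above, give
\begin{equation*}
R(g_t)(x_0)\geq \int K(x_0,t;y,s)R(g_s)(y)\,dy .
\end{equation*}
The kernel mass outside $B(x_0,Cs^{\beta})$ is negligible only if it is weighted by the bound $|R(g_s)|\leq C\varepsilon/s$. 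Here $\beta<1/2$ is essential: the set $B(x_0,Cs^\beta)$ is much larger than the parabolic scale $\sqrt{s}$, so the contribution from the bad region is $O(s^{-1}e^{-cs^{2\beta-1}})\to 0$. This is exactly why the definition uses scale $t^\beta$. The remaining difficulty is that Step 1 is needed uniformly on $B(x_0, Cs^{\beta})$ about the fixed point $x_0$, which is what (\ref{eq:betaweakcondition}) at $x_0$ provides. Letting $s\searrow0$ then gives $R(g_t)(x_0)\geq 0$.
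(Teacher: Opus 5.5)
Your Step 1 matches what the paper relies on. The paper gives no argument of its own here and simply cites \cite[Remark 2.4]{PBG23}, which packages the localization and chart-independence results of \cite{PBG19} together with preservation of $\beta$-weak lower bounds along the flow.

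The genuine gap is in how you resolve the ``main obstacle'' in Step 2. For fixed $t$ and $s\searrow 0$, the kernel $K(x_0,t;\cdot,s)$ is spread over the spatial scale $\sqrt{t-s}\approx\sqrt{t}$, not $\sqrt{s}$. Its mass inside $B(x_0,Cs^{\beta})$ is of order $(s^{\beta}/\sqrt{t})^{n}\to 0$, so the mass outside that ball is close to $1$. The tail bound $O(e^{-cs^{2\beta-1}})$ you use is valid only when $t-s\lesssim s$, i.e.\ across a single dyadic time step. On essentially all of the kernel's support you therefore only know $R(g_s)\geq -C\varepsilon/s$, and your lower bound for $R(g_t)(x_0)$ tends to $-\infty$. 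This is structural, not a matter of constants: the hypothesis at the single point $x_0$ cannot force $R(g_t)(x_0)\geq 0$ for fixed $t>0$. Think of a smooth metric with $R\geq 0$ at $x_0$ but $R<0$ on an annulus of radius comparable to $\sqrt{t}$ about $x_0$. A correct argument must use the $\beta$-weak condition at other points, and that is exactly where the non-uniformity in $x$ you flagged has to be overcome.

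The missing idea is to iterate over dyadic time scales and let the contradiction land at a point you do not choose in advance.
\begin{itemize}
\item Suppose $R(g_{t_0})(x_0)=-\epsilon_0<0$ and put $t_k=2^{-k}t_0$.
\item Use your representation formula only on $[t_{k+1},t_k]$. After parabolic rescaling by $\sqrt{t_k}$, the kernel has Gaussian bounds uniform in $k$, by (\ref{eq:RDTFderivests}) and the bilipschitz bound.
\item Choose $r_k=A_k\sqrt{t_k}$ with $A_k$ just large enough that $C\varepsilon t_k^{-1}e^{-cA_k^2}\leq \epsilon_0 2^{-k-2}$; then $A_k^2=O(\log(1/t_k))$ as $k\to\infty$.
\item This gives $x_{k+1}\in\overline{B(x_k,r_k)}$ with $R(g_{t_{k+1}})(x_{k+1})\leq -\epsilon_0/2$, since the errors sum to at most $\epsilon_0/2$.
\item The $x_k$ converge to some $x_\infty$ with $|x_k-x_\infty|\leq\sum_{j\geq k}r_j\lesssim\sqrt{t_k\log(1/t_k)}$. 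This is $\leq t_k^{\beta}$ for large $k$ precisely because $\beta<1/2$.
\item Hence $\inf_{B(x_\infty,t_k^{\beta})}R(g_{t_k})\leq-\epsilon_0/2$ for all large $k$. This contradicts (\ref{eq:betaweakcondition}) at $x_\infty$, transferred to $(g_t)$ via your Step 1.
\end{itemize}
This is where both $\beta<1/2$ and the validity of the hypothesis at \emph{every} point are genuinely used.
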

This is due to \cite[Remark 2.4]{PBG23}.

\subsection{$C^0$ mass}
We now define the quantity in Theorem \ref{thm:C0ADMmassexistence}:
\begin{definition}\label{def:C0mass}
Let $r>0$, $g$ be a $C^0$ metric on a smooth manifold $M$, $\Phi: U\to \R^n$ a smooth coordinate chart for $M$ such that $A(0, .9r, 1.1r) \subset \Phi(U)$, where $A(0, .9r, 1.1r)$ denotes the annulus in $\R^n$ measured with respect to the Euclidean metric, and $\varphi:\R\to \R$ some smooth function such that $\int_{.9}^{1.1}\varphi(s)ds \neq 0$. Writing $g_{ij}$ for $(\Phi_*g)_{ij}$, we define the \emph{$C^0$ local mass of $g$ with respect to $\varphi$ and $\Phi$ at $r$} by
\begin{equation}\label{eq:defC0mass}
\begin{split}
M_{C^0}&(g, \Phi,  \varphi, r)
\\& := 
\frac{(4\pi (n-1)\omega_{n-1})^{-1}}{r\int_{.9}^{1.1}\varphi(\ell)d\ell}\sum_{i,j = 1}^{n}\bigg[ \int_{A(o,.9r, 1.1r)} \left( \frac{n-2}{|x|}\varphi(\tfrac{|x|}{r}) + \frac{1}{r}\varphi'(\tfrac{|x|}{r})\right)\delta^{ij}(g_{ij} - \delta_{ij})
\\& \qquad \qquad + \left(\frac{1}{|x|}\varphi(\tfrac{|x|}{r}) - \frac{1}{r}\varphi'(\tfrac{|x|}{r})\right)(g_{ij} - \delta_{ij})\frac{x^ix^j}{|x|^2}dx
\\& \qquad \qquad+ \int_{\partial A(o, .9r, 1.1r)} (g_{ij} - \delta_{ij})\frac{x^i}{|x|}\varphi(\tfrac{|x|}{r})\nu^j - (g_{jj}- \delta_{jj})\frac{x^i}{|x|}\varphi(\tfrac{|x|}{r})\nu^i dS\bigg],
\end{split}
\end{equation}
where $\nu$ denotes the outward unit normal vector with respect to the Euclidean metric along $\partial A(0,.9r, 1.1r)$ and $dS$ is the Euclidean surface measure on $\partial A(0, .9r, 1.1r)$.  Henceforth we write $M_{C^0}(g, \varphi, r)$ for $M_{C^0}(g, \Phi, \varphi, r)$ since, following the convention of Definition \ref{def:C0AFRn}, it is not necessary to emphasize the choice of asymptotically flat coordinate chart.
\end{definition}
\begin{remark}\label{rmk:C0agreeswithC1average}
It is a computation (see \cite[Remark 2.2]{PBG23}) to show that for any smooth $\varphi: \R \to \R$ with $\int_{.9}^{1.1}\varphi(s)ds \neq 0$,
\begin{equation*}
M_{C^0}(g,  \varphi, r) = (4\pi (n-1)\omega_{n-1})^{-1}\frac{\int_{.9r}^{1.1r}\int_{\S(u)} (\partial_j g_{ij} - \partial_i g_{jj})\nu^i\varphi(\tfrac{u}{r})dSdu}{r\int_{.9}^{1.1}\varphi(\ell)d\ell},
\end{equation*}
so $M_{C^0}(g, \varphi, r)$ is the average (weighted by $\varphi$) of the local quantity $c(n)\int_{\S(r)} (\partial_j g_{ij} - \partial_i g_{jj})\nu^idS$, which converges as $r\to \infty$ to $m_{ADM}(g)$ when latter exists.
\end{remark}

The following lemma quantifies the distortion of the $C^0$-local mass along the Ricci-DeTurck flow:
\begin{lemma}\label{lemma:massdistortionestimate}
Let $\varphi: \R \to \R^{\geq 0}$ be a smooth cutoff function with $\supp(\varphi)\subset \subset (.9, 1.1)$, and let $g_0$ be a $C^0$-asymptotically flat metric on $\R^n$ with decay rate $\tau \in (\tfrac{n-2}{2}, n-2)$ such that $|| g_0 - \delta||_{C^0(\R^n)} < \min\{\varepsilon', \varepsilon'''\}$, where $\varepsilon' = \varepsilon'(n,\tau)$ is as in Lemma \ref{lemma:RDTFC0AF} and $\varepsilon'''$ is as in Lemma \ref{lemma:RDTFL2distortion}. Let $(g_t)_{t >0}$ be the Ricci-DeTurck flow starting from $g_0$ given by Lemma \ref{lemma:KL}, and let $\bar \eta >0$. There exists $\bar r = \bar r(n, \bar \eta, \Supp(\varphi), \tau, ||\rho_0^\tau(g_0 - \delta)||_{L^\infty(\R^n)})$, where $\rho_t$ is as in Lemma \ref{lemma:RDTFC0AF}, such that for all $r > \bar r$ and $\eta > \bar \eta$, there exists a smooth solution $\varphi_{r^{-\eta}}(\ell, t): \R \times [0, r^{-\eta}]\to \R$ to
\begin{equation}\label{eq:cutofffunctionevsinglevar}
\begin{cases}
\partial_t \varphi_{r^{-\eta}}(|x|, t) &= -\Delta \varphi_{r^{-\eta}}(|x|, t) + \frac{n-1}{|x|^2}\varphi_{r^{-\eta}}(|x|, t) \text{ for } (x,t)\in \R^n\times (0, r^{-\eta})\\
\varphi_{r^{-\eta}}(\ell, r^{-\eta}) &= \varphi(\ell) \text{ for all } \ell\in \R
\end{cases}
\end{equation}
such that
\begin{equation*}
\int_0^{r^{2-\eta}}\left|  \frac{d}{dt}M_{C^0}(g_t, \varphi_{r^{-\eta}}(\cdot, \tfrac{t}{r^2}), r)\right|dt \leq c(n,\varphi, || \rho_0^\tau(g_0-\delta)||_{L^\infty(\R^n)}, \tau, \bar \eta)r^{n - 2 - 2\tau}.
\end{equation*} 
In particular, 
\begin{equation*}
M_{C^0}(g_0) = \lim_{r\to \infty}M_{C^0}(g_0, \varphi_{r^{-\eta}}(0), r) =  \lim_{r\to \infty}M_{C^0}(g_{r^{2-\eta}}, \varphi, r).
\end{equation*}
\end{lemma}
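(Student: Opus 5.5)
We follow the strategy of \cite{PBG23}: differentiate the local mass along the flow, using a cutoff evolving by the adjoint of the linearized operator, so that the linear part of the evolution of $h_t=g_t-\delta$ cancels and only the quadratic terms $Q^0,\nabla^*Q^1$ remain.

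\textbf{Step 1 (the cutoff).} Take the rescaled cutoff $\psi(x,t):=\varphi_{r^{-\eta}}(\tfrac{|x|}{r},\tfrac{t}{r^2})$. The backward equation (\ref{eq:cutofffunctionevsinglevar}) is a linear backward parabolic problem for a radial function on the annulus. It is regular away from the origin, and we only need it on $[0,r^{-\eta}]$ in rescaled time. For this short time interval the solution stays smooth, stays supported up to exponentially small tails in a slightly enlarged annulus, and satisfies $\varphi_{r^{-\eta}}(\cdot,t)\to\varphi$ in $C^\infty$ as $r\to\infty$, uniformly in $t\in[0,r^{-\eta}]$. If compact support is needed, we multiply by a fixed cutoff equal to $1$ on $A(0,.85,1.15)$; the resulting error is $O(e^{-cr^{\eta}})$.

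\textbf{Step 2 (the derivative of the mass).} By Remark \ref{rmk:C0agreeswithC1average}, after integrating by parts, $M_{C^0}(g_t,\psi,r)$ is $c_n r^{-1}\big(\int\varphi\big)^{-1}\int \langle h_t, T[\psi]\rangle\,dx$. Here $T[\psi]$ is the $(0,2)$-tensor built from $\psi$ and its first derivatives, as in (\ref{eq:defC0mass}). We then
\begin{itemize}
\item[] differentiate in $t$;
\item[] substitute (\ref{eq:heveuclbackground}), $\partial_t h=\Delta h+Q^0+\nabla^*Q^1$;
\item[] integrate $\Delta$ by parts.
\end{itemize}
The key point, already exploited in \cite{PBG23}, is that the mass functional annihilates the linearized Ricci-DeTurck operator modulo a term involving $\partial_t\psi+\Delta\psi-\tfrac{n-1}{|x|^2}\psi$. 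The latter vanishes by (\ref{eq:cutofffunctionevsinglevar}); the potential $\tfrac{n-1}{|x|^2}$ is exactly what arises when commuting $\Delta$ with $x^i/|x|$ and $\partial_j$. We verify this by direct computation using the linear identity $\partial_t(\partial_jh_{ij}-\partial_ih_{jj})=\Delta(\cdot)$. We are then left with
\[
\Big|\tfrac{d}{dt}M_{C^0}\Big|\le \frac{C}{r}\int_{A(0,.85r,1.15r)}\big(|\nabla h|^2|\nabla\psi|+|h||\nabla h||\nabla^2\psi|\big)\,dx ,
\]
where $|\nabla^k\psi|\le Cr^{-k}$.

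\textbf{Step 3 (time integration).} We integrate over $t\in(0,r^{2-\eta})$ and cover the annulus by $O(1)$ balls $B(x,.1r)$ with $x\in A(0,.9r,1.1r)$.
\begin{itemize}
\item[] For the $|\nabla h|^2$ term, Lemma \ref{lemma:RDTFL2distortion} gives $\int_0^{r^{2-\eta}}\int|\nabla h|^2\le Cr^{n-2\tau}$. Multiplying by $r^{-1}\cdot r^{-1}$ gives $Cr^{n-2-2\tau}$.
\item[] For the $|h||\nabla h|$ term, we use $|h|\le Cr^{-\tau}$ (Lemma \ref{lemma:RDTFC0AF}) together with Cauchy--Schwarz: $\int|\nabla h|\le (r^{n}r^{2-\eta})^{1/2}(r^{n-2\tau})^{1/2}$. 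Multiplying by $r^{-1}r^{-2}r^{-\tau}$ gives $Cr^{n-2-2\tau-\eta/2}$, which is better.
\end{itemize}
Since $\tau>\tfrac{n-2}{2}$, both terms tend to $0$.

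\textbf{Step 4 (conclusion).} We have $M_{C^0}(g_0,\varphi_{r^{-\eta}}(0),r)-M_{C^0}(g_{r^{2-\eta}},\varphi,r)\to0$. Combined with $\varphi_{r^{-\eta}}(\cdot,0)\to\varphi$ and Theorem \ref{thm:C0ADMmassexistence}, applied with $\varphi^r=\varphi_{r^{-\eta}}(\cdot,0)$, this yields the final chain of equalities.

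\textbf{Main obstacle.} The main obstacle is Step 2. One must check that the linear terms cancel exactly with the chosen evolution of $\psi$, including the boundary terms in (\ref{eq:defC0mass}) and the normalization $\int\varphi_{r^{-\eta}}(\cdot,t)$, which now depends on $t$. This $t$-dependence may require us to track the normalizing factor separately, by showing that its derivative is $O(r^{-\eta})$ times the mass. That is harmless, because $|M_{C^0}(g_t,\cdot,r)|\le Cr^{n-2-\tau}\cdot$ (bounded) is controlled via Corollary \ref{cor:smoothdecay}.
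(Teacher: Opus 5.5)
Your overall route is the paper's. The paper reruns the computation of \cite[Corollary 4.3, Lemma 2.6]{PBG23}, replacing the global bound $\|g_0-\delta\|_{C^0}\le c_0r^{-\tau}$ by Lemma \ref{lemma:RDTFC0AF} and Lemma \ref{lemma:RDTFL2distortion}. Your Step 2 cancellation is right: the weight $\frac{x^i}{|x|}\psi$ is the gradient of a radial function, so commuting $\Delta$ past it produces exactly the potential $\frac{n-1}{|x|^2}$. Your Step 3 bookkeeping is also correct, and slightly sharper than the paper's on the $Q^1$ term.

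However, the point you flag as the main obstacle is resolved incorrectly, and this is a genuine gap. Write $N(t):=r\int_{.9}^{1.1}\varphi_{r^{-\eta}}(\ell,\tfrac{t}{r^2})\,d\ell$ and $P:=N\,M_{C^0}$, so that
\begin{equation*}
\frac{d}{dt}M_{C^0}=\frac{P'}{N}-M_{C^0}\,\frac{N'}{N}.
\end{equation*}
Your Steps 2--3 control $\int|P'|/N$ by $r^{n-2-2\tau}$. For the second term, however, your bound is $\sup_t|M_{C^0}(g_t,\cdot,r)|\int_0^{r^{2-\eta}}|N'/N|\,dt\lesssim r^{n-2-\tau}\cdot r^{-\eta}$.
\begin{itemize}
\item This is not $O(r^{n-2-2\tau})$ unless $\eta\ge\tau$.
\item It does not even tend to $0$ when $\eta\le n-2-\tau$. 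This happens, for instance, in the regime $\eta<\tau-\frac{n-2}{2}$ used in Section \ref{sec:approximationproof} when $\tau$ is close to $\frac{n-2}{2}$.
\end{itemize}
Since $\tau<n-2$, the a priori bound $|M_{C^0}(g_t,\cdot,r)|\lesssim r^{n-2-\tau}$ grows with $r$. So no polynomially small relative drift of the normalization is affordable.

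The missing idea is that $N$ is conserved up to an exponentially small boundary flux. With $\theta=t/r^2$, equation (\ref{eq:cutofffunctionevsinglevar}) is in divergence form:
\begin{equation*}
\partial_\theta\varphi_\theta=-\partial_\ell\Big(\partial_\ell\varphi_\theta+\frac{n-1}{\ell}\varphi_\theta\Big).
\end{equation*}
Equivalently, $\varphi_\theta$ is the radial derivative of a radial backward heat solution $U_\theta$, so $\int_{.9}^{1.1}\varphi_\theta\,d\ell=U_\theta(1.1)-U_\theta(.9)$. Hence $\frac{d}{d\theta}\int_{.9}^{1.1}\varphi_\theta\,d\ell$ involves only $\varphi_\theta$ and $\partial_\ell\varphi_\theta$ at $\ell=.9,1.1$.

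Gaussian tail bounds for the backward evolution over $\theta\in[0,r^{-\eta}]$ make these quantities of size $r^{c\eta}\exp(-\tfrac{D^2}{4}r^{\eta})$, where $D=\dist(\Supp\varphi,\{.9,1.1\})$. This beats $r^{n-2-\tau}$ once $r\ge\bar r(n,\bar\eta,\Supp\varphi,\tau,\ldots)$. This is what \cite[(4.3), (4.4)]{PBG23} and the exponential term in the paper's proof supply, and it is why $\bar r$ depends on $\bar\eta$ and $\Supp\varphi$.

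The same tail estimates are needed for the boundary terms of (\ref{eq:defC0mass}), which do not cancel exactly in Step 2. Your remedy of multiplying by a cutoff equal to $1$ on $A(0,.85,1.15)$ changes nothing on $[.9,1.1]$, so it does not remove them. They must instead be bounded using the smallness of $\varphi_\theta$ and $\partial_\ell\varphi_\theta$ at $\ell=.9,1.1$. This is the source of the term $c'(n)r^{n\eta/2+n-1}\exp(-\frac{D^2}{4}r^{\eta})$ in the paper.
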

\begin{remark}
Observe that, because of the specified lower bound for $\tau$, $n-2 -2\tau <0$, so the right hand side decays to $0$ as $r\to \infty$. If $u$ is any spherically symmetric solution to the backwards heat equation $\partial_t u = -\Delta u$ on Euclidean space, then the evolution equation from (\ref{eq:cutofffunctionevsinglevar}) is precisely the evolution equation for the radial derivative of $u$.
\end{remark}
\begin{proof}[Proof of Lemma \ref{lemma:massdistortionestimate}]
The lemma essentially follows from the proof of \cite[Lemma 2.6]{PBG23}, the only difference being that rather than taking $g_0$ to be extension of a $C^0$-asymptotically flat metric restricted to the annulus $A(0, .8r, 1.2r)$, which satisfies the global bound $||g_0 - \delta||_{C^0(\R^n)} \leq c_0 r^{-\tau}$, we take $g_0$ to be a general $C^0$-asymptotically flat metric and make use of Lemma \ref{lemma:RDTFC0AF} and Lemma \ref{lemma:RDTFL2distortion}. Note that \cite[(3.13), (3.14), (3.15)]{PBG23} are given here by Lemma \ref{lemma:KL}. Let $\varphi_\theta$ be as in \cite[Lemma 4.1]{PBG23}, with prescribed final data $\varphi$, and let $D = \dist(\Supp(\varphi), \partial(.9, 1.1))$, so $D = d_{a,b}$ in the notation of \cite[Lemma 4.1]{PBG23}, and hence $D = D(\varphi)$. It then follows from Lemma \ref{lemma:RDTFC0AF} and the proof of \cite[Corollary 4.3]{PBG23} that for all $\theta_0 \in [0, r^{2-\eta})$ we have
\begin{equation*}
\begin{split}
&  \bigg|\int_{\theta_0}^{r^{2-\eta}}\frac{d}{dt}\left[M_{C^0}(g_t , \varphi_{r^{-\eta}}(\tfrac{t}{r^2}), r)\left(r\int_{.9}^{1.1}\varphi_{r^{-\eta}}(\ell, \tfrac{t}{r^2})d\ell\right)\right]dt \bigg|
\\&  \leq cr^{-2\tau}r^{n-1} + c'(n)r^{n\eta/2 + n -1}\exp\left(-\frac{D^2}{4}r^{\eta}\right),
\end{split}
\end{equation*}
as we will now explain: By the proof of \cite[Corollary 4.3]{PBG23}, 
\begin{equation*}
\begin{split}
&  \bigg|\int_{\theta_0}^{r^{2-\eta}}\frac{d}{dt}\left[M_{C^0}(g_t , \varphi_{r^{-\eta}}(\tfrac{t}{r^2}), r)\left(r\int_{.9}^{1.1}\varphi_{r^{-\eta}}(\ell, \tfrac{t}{r^2})d\ell\right)\right]dt \bigg|
\\&  \leq c'(n)r^{n\eta/2 + n -1}\exp\left(-\frac{D^2}{4}r^{\eta}\right) + A + B,
\end{split}
\end{equation*}
where, for some $z_1, z_2\in A(0, .9r, 1.1r)$,  
\begin{equation}\label{eq:ABestimate}
\begin{split}
A + B &\leq c(n,\varphi)r^{n-1}(r^{-n}||\nabla h||^2_{L^2(B(z_1, .5r)\times (0,r^{2-\eta}))}) 
\\& \qquad \qquad+ c(n, \varphi)r^{n-1}(r^{-n/2}||h||_{C^0(B(z_2, .5r)\times(0, r^{2-\eta}))}||\nabla h||_{L^2(B(z_2,.5r)\times(0,r^{2-\eta}))}). 
\end{split}
\end{equation}

We use Lemma \ref{lemma:RDTFL2distortion} to estimate these two summands. Assuming $r > \bar r(n,\tau, \eta)$, where $\bar r$ is as in Lemma \ref{lemma:RDTFL2distortion}, we may cover $B(z_i, .5r)$ with $k(n)$-many balls of radius $.1r$ and apply Lemma \ref{lemma:RDTFL2distortion} to each such ball to find that
\begin{equation*}
r^{-n/2}||\nabla h||_{L^2(B(z_i, .5r)\times (0, r^{2-\eta}))} \leq c(n, \varphi, || |\cdot|^\tau(g-\delta)||_{L^\infty(\R^n)}, \eta, \tau)r^{-\tau}.
\end{equation*}
Combining this estimate with Lemma \ref{lemma:RDTFC0AF} we find that
\begin{equation*}
\begin{split}
A + B &\leq c(n, \varphi, || |\cdot|^\tau(g-\delta)||_{L^\infty(\R^n)}, \eta, \tau)r^{n-1}r^{-2\tau}
\end{split}
\end{equation*}

Note that \cite[(4.3), (4.4)]{PBG23} still hold in this setting so combining them as in the proof of \cite[Lemma 2.6]{PBG23} yields the estimate
\begin{equation*}
\begin{split}
\int_0^{r^{2-\eta}}\bigg| \frac{d}{dt} M_{C^0}(g_t, r, \varphi(\cdot, \tfrac{t}{r^2}))\bigg|dt & \leq c(n,\varphi)r^{-2\tau}r^{n-2} + c(n, \varphi)r^{n-2 +n\eta/2 - 2 + 2-\eta}\exp\left(-\frac{D^2}{4}r^{\eta}\right)
\\& \leq c(n, \varphi, \eta, \tau)r^{-2\tau + n - 2},
\end{split}
\end{equation*}
where the last step is achieved by choosing $\bar r$ and hence $r$ sufficiently large to absorb the exponential term.
\end{proof}

Having established Lemma \ref{lemma:massdistortionestimate} we now state Theorem \ref{thm:C0ADMmassexistence} more precisely:
\begin{theorem}\label{thm:fullmassexistence}
In the setting of Lemma \ref{lemma:massdistortionestimate}, let $\varphi_{r^{-\eta}}(t)$ denote the smooth time-dependent function corresponding to $\varphi$ whose existence is given by Lemma \ref{lemma:massdistortionestimate}. Then the quantity $M_{C^0}(g, \varphi_{r^{-\eta}}(0), r)$ given by Definition \ref{def:C0mass} has the following properties:
\begin{enumerate}
\item\label{item:generalC0limitexistence} If $g$ is $C^0$-asymptotically flat with decay rate $\tau$ for some $\tau > (n-2)/2$ and $g$ has nonnegative scalar curvature in the sense of Ricci flow (Definition \ref{def:RFNNSC}) outside of a compact set, then the limit $M_{C^0}(g) := \lim_{r\to \infty}M_{C^0}(g, \varphi_{r^{-\eta}}(0), r)$ exists, is either finite or $+\infty$, and is independent of choice of $\varphi$ and choice of $C^0$-asymptotically flat coordinate chart for $g$. 
\item If $g$ is $C^2$ and $m_{ADM}(g)$ exists, then $m_{ADM}(g) = \lim_{r\to\infty}M_{C^0}(g, \varphi_{r^{-\eta}}(0), r)$.
\end{enumerate}
\end{theorem}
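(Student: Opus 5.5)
The plan is to reduce each assertion to its counterpart in \cite[Theorems 1.3 and 2.9]{PBG23}. The only change is that the localization step of \cite[Lemma 2.6]{PBG23} is replaced by Lemma \ref{lemma:massdistortionestimate}. Localization is needed because in part (1) we only assume nonnegative scalar curvature outside a compact set and $\|g-\delta\|_{C^0}$ need not be small globally. So for each large $r$ I would take the restriction of $g$ to a thick annulus $A(0,.5r,2r)$ and extend it to a continuous metric $g^{(r)}$ on $\R^n$. Because $|g-\delta|\le c_0|x|^{-\tau}$ there, the extension can be chosen with $\|g^{(r)}-\delta\|_{C^0(\R^n)}\le Cc_0r^{-\tau}$ and $\|\rho_0^\tau(g^{(r)}-\delta)\|_{L^\infty}\le C c_0$ uniformly in $r$. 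For $r$ large this falls below $\min\{\varepsilon',\varepsilon'''\}$. By the finite speed of localization estimates implicit in Lemma \ref{lemma:RDTFL2distortion}, the scalar curvature of the flow is nonnegative on $A(0,.8r,1.2r)$ up to errors that are exponentially small in $r^{\eta}$, which is what the weak condition of Definition \ref{def:RFNNSC} gives for this local flow. Lemma \ref{lemma:massdistortionestimate} applied to $g^{(r)}$ then yields
\begin{equation*}
\big|M_{C^0}(g,\varphi_{r^{-\eta}}(0),r)-M_{C^0}(g^{(r)}_{r^{2-\eta}},\varphi,r)\big|\le C r^{n-2-2\tau},
\end{equation*}
since $M_{C^0}(\cdot,\cdot,r)$ only sees the metric on $A(0,.9r,1.1r)$.

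Second, I would show that the smooth quantity $r\mapsto M_{C^0}(g^{(r)}_{r^{2-\eta}},\varphi,r)$ is almost nondecreasing. By Remark \ref{rmk:C0agreeswithC1average} it is a $\varphi$-weighted average of the flux $\int_{\S(u)}(\partial_jg_{ij}-\partial_ig_{jj})\nu^i$. For smooth metrics the divergence theorem gives
\begin{equation*}
R(g)=\partial_i(\partial_jg_{ij}-\partial_ig_{jj})+O\big(|\partial g|^2+|g-\delta||\partial^2g|\big),
\end{equation*}
so the difference of fluxes over comparable radii is the integral of $R\ge 0$ plus a quadratic error. The point is to estimate that error without using the time-$r^{2-\eta}$ pointwise bounds of Corollary \ref{cor:smoothdecay} alone, since these give the useless size $r^{n-2-2\tau+\eta}$. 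Instead I would integrate the divergence identity in space-time over $(0,r^{2-\eta})$, exactly as in \cite[Corollary 4.3]{PBG23}. The $|\partial g|^2$ term is then controlled by the weighted $L^2$ bound $r^{-n}\|\nabla h\|^2_{L^2}\le Cr^{-2\tau}$ of Lemma \ref{lemma:RDTFL2distortion}, and the $|g-\delta||\partial^2g|$ term by integration by parts together with Lemma \ref{lemma:RDTFC0AF}. This yields, for $r\le r'\le 2r$,
\begin{equation*}
M(r')\ge M(r)-Cr^{n-2-2\tau}.
\end{equation*}
Applying this along the dyadic sequence $r_k=2^kr_0$ gives errors that are summable because $n-2-2\tau<0$. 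Hence $\liminf$ and $\limsup$ agree, and the limit exists in $(-\infty,+\infty]$; this is the step I expect to be the main obstacle.

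For independence of $\varphi$, note that for two cutoffs $\varphi,\psi$ both quantities agree with weighted flux averages of the smooth slices over the same annulus. By the almost-monotonicity of the flux in $u$ and the fact that both are probability-normalized weightings, they differ by $O(r^{n-2-2\tau})$. Chart independence follows from the argument of \cite[Theorem 2.9]{PBG23}: two $C^0$-asymptotically flat charts differ asymptotically by a rigid motion plus a perturbation decaying like $|x|^{1-\tau}$, and the $C^0$ local mass, being an integral of $g-\delta$ against fixed kernels, changes by $o(1)$.

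For part (2), if $g$ is $C^2$ and $m_{ADM}(g)$ exists, I would apply Remark \ref{rmk:C0agreeswithC1average} directly to $g$ with the cutoff $\varphi_{r^{-\eta}}(0)$. This expresses $M_{C^0}(g,\varphi_{r^{-\eta}}(0),r)$ as a normalized average over $u\in(.9r,1.1r)$ of the sphere fluxes, which converge to $m_{ADM}(g)$. The cutoff $\varphi_{r^{-\eta}}(0)$ converges in $C^\infty$ to $\varphi$ with $\int\varphi\neq0$, since the backward evolution only runs for time $r^{-\eta}\to0$. The normalizing denominators therefore stay bounded away from zero, and the weighted averages converge to $m_{ADM}(g)$.
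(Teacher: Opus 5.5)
The paper does not reprove this statement: it is Theorem \ref{thm:C0ADMmassexistence} stated precisely, quoted from \cite[Theorems 1.3 and 2.9]{PBG23}, with Lemma \ref{lemma:massdistortionestimate} serving only to identify the cutoffs $\varphi_{r^{-\eta}}(0)$. Your part (2) is fine, and so is your first localization estimate. The self-contained argument you sketch for part (1), however, has a genuine gap at the positivity step.

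Theorem \ref{thm:weighteddifference}, on which Lemma \ref{lemma:RDTFL2distortion} rests, compares two flows only in the weighted $\tilde X_T$-norm ($h$ in $L^\infty$, $\nabla h$ in $L^2$ and $L^{n+4}$). It says nothing about $R$. More fundamentally, Definition \ref{def:RFNNSC} is a $t\searrow 0$ condition at a point, for \emph{some} flow from \emph{some} local extension. It gives no sign of $R$ at positive times for any flow.

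To get $R(g^{(r)}_t)\ge -(\text{small})$ on $A(0,.8r,1.2r)$ for all $t\le r^{2-\eta}$, you need two further ingredients. First, independence of the $\beta$-weak condition from the choice of extension. Second, a localized maximum-principle argument for the evolution of $R$ along the Ricci--DeTurck flow, controlling the influence of the region where your artificial extension may have negative scalar curvature. Lemma \ref{lemma:RFNNSC} does not substitute for this: it requires nonnegative scalar curvature on all of $\R^n$ for a globally $\varepsilon$-close metric. In that special situation (the one used for Theorem \ref{thm:C0PMT}) you could drop the localization and use the global flow. As written, this step is an assertion.

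The almost-monotonicity step also fails as stated. $M_{C^0}(g^{(r)}_{r^{2-\eta}},\varphi,r)$ and $M_{C^0}(g^{(r')}_{(r')^{2-\eta}},\varphi,r')$ are fluxes of two different smooth metrics (different localizations, different times), so no divergence identity relates them. You need three things:
\begin{enumerate}
\item a single flow whose initial data agree with $g$ on both annuli;
\item a common time $T=r^{2-\eta}$;
\item the observation that Lemma \ref{lemma:massdistortionestimate} bounds the total variation on all of $[0,(r')^{2-\eta}]$, so the scale-$r'$ quantity can be read off at time $T$ with the intermediate cutoff $\varphi_{(r')^{-\eta}}(\cdot,T/(r')^2)$.
\end{enumerate}
This is how the proof of Lemma \ref{lemma:finitemass} proceeds: take $r'/r$ large enough that the two annuli are disjoint, then chain.

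The quadratic error in Lemma \ref{lemma:Bartnikmonotonicity} is then a time-$T$ slice, which the space-time bound of Lemma \ref{lemma:RDTFL2distortion} does not control. The pointwise bound of Corollary \ref{cor:smoothdecay}, which you dismissed, is what works. It gives $O(r^{n-2-2\tau+\eta})$, which is summable along geometric sequences once $\eta<2\tau-n+2$; for larger $\eta$ one must pass through a smaller $\eta$.

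Likewise, two cutoffs at the same scale do not give masses within $O(r^{n-2-2\tau})$. Their difference is controlled only by $\int_{A(0,.9r,1.1r)}R(g_T)$ plus that error, and this need not tend to $0$ when the mass is $+\infty$. So $\varphi$-independence has to come from cross-scale comparison of the two cutoffs, together with Lemma \ref{lemma:finitemass} when the limit is finite.
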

\begin{remark}
The limit $M_{C^0}(g)$ is also independent of choice of $\eta$; see \cite[Remark 2.10]{PBG23}.
\end{remark}

\begin{lemma}\label{lemma:Bartnikmonotonicity}
Suppose $g$ is a $C^2$ Riemannian metric on $\R^n\setminus \overline{B(0, r_0)}$ for some $r_0>0$. Then for any smooth functions $\varphi_i: \R \to \R$ such that $\int_{.9}^{1.1}\varphi_i(s)ds\neq 0$ for $i= 1,2$ and any $r_1 >1/.9 r_0$ and $r_2 > 1.1 r_1$ we have
\begin{equation*}
M_{C^0}(g, \varphi_2, r_2) - M_{C^0}(g, \varphi_1, r_1) = (4\pi(n-1)\omega_{n-1})^{-1}\int_{A(0, \ell_1 r_1, \ell_2 r_2)} R(g) + \nabla g * \nabla g
\end{equation*}
for some $\ell_1, \ell_2 \in [.9, 1.1]$.
\end{lemma}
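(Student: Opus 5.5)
Because $g$ is $C^2$ on the region in question, Remark \ref{rmk:C0agreeswithC1average} writes each $M_{C^0}(g,\varphi_i,r_i)$ as a $\varphi_i$-weighted average over $u\in[.9r_i,1.1r_i]$ of the sphere flux
\begin{equation*}
m(u) := (4\pi(n-1)\omega_{n-1})^{-1}\int_{\S(u)}(\partial_j g_{ij} - \partial_i g_{jj})\nu^i\, dS .
\end{equation*}
The plan is therefore to control $m(u)-m(u')$ by the divergence theorem and then average. Write $U^i := \partial_j g_{ij} - \partial_i g_{jj}$. The classical expansion of scalar curvature about $\delta$ gives, pointwise,
\begin{equation*}
R(g) = \partial_i\partial_j g_{ij} - \partial_i\partial_i g_{jj} + (g-\delta)*\partial^2 g + \partial g*\partial g = \partial_i U^i + (g-\delta)*\partial^2 g + \nabla g*\nabla g .
\end{equation*}
Read with the paper's $*$ convention, the term $(g-\delta)*\partial^2 g$ is not literally $\nabla g*\nabla g$. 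I will interpret the statement's schematic remainder $\nabla g*\nabla g$ as including terms of the form $h*\nabla^2 h$ with $h=g-\delta$, or else reduce to the exact identity $\partial_i U^i = R(g) + \mathcal{E}$ and call the error $\mathcal{E}$. This is the identity used in Bartnik's mass invariance argument, and it holds on $\R^n\setminus \overline{B(0,r_0)}$ since $g$ is $C^2$ there.

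For $r_0<u<u'$, the divergence theorem on $A(0,u,u')$ then gives
\begin{equation*}
m(u')-m(u) = (4\pi(n-1)\omega_{n-1})^{-1}\int_{A(0,u,u')}\bigl(R(g)+\mathcal{E}\bigr)\,dx .
\end{equation*}
The hypotheses $r_1>r_0/.9$ and $r_2>1.1r_1$ ensure that every $u\in[.9r_1,1.1r_1]$ and every $u'\in[.9r_2,1.1r_2]$ satisfy $r_0<u<u'$, so this applies to all pairs in the two averaging ranges.

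Next I write the difference of the two masses as a double average. Set $w_i(u)=\varphi_i(u/r_i)\big/\bigl(r_i\int\varphi_i\bigr)$, so that $\int w_i\,du = 1$ on the relevant interval. Then
\begin{equation*}
M_{C^0}(g,\varphi_2,r_2)-M_{C^0}(g,\varphi_1,r_1) = \iint w_1(u)\,w_2(u')\bigl(m(u')-m(u)\bigr)\,du\,du' .
\end{equation*}
Substituting the annulus identity, the continuous function
\begin{equation*}
(u,u')\mapsto \int_{A(0,u,u')}\bigl(R+\mathcal{E}\bigr)
\end{equation*}
is averaged against the product weight $w_1(u)w_2(u')$, whose total integral is $1$.

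The main obstacle is that the $\varphi_i$ need not be nonnegative, so $w_1w_2$ may change sign. In that case the plain mean value theorem for integrals does not produce a single pair $(\ell_1 r_1,\ell_2 r_2)$ in the box. I would try the following.
\begin{itemize}
\item If $\varphi_i\geq 0$, which is the case used in the paper, the mean value theorem for integrals on the connected rectangle $[.9r_1,1.1r_1]\times[.9r_2,1.1r_2]$ gives $u_*=\ell_1r_1$ and $u'_*=\ell_2r_2$ with $\ell_1,\ell_2\in[.9,1.1]$, and the identity follows.
\item If the weights may change sign, I would read the lemma schematically. The ``$*$'' already allows a modified error, so the weighted average of the annulus integrals can be rewritten as a single annulus integral: the discrepancy lies in the inner and outer shells $A(0,.9r_i,1.1r_i)$ and is absorbed into a reweighted integrand of the same type $R+\nabla g*\nabla g$, with weights bounded by $c(\varphi_i)$.
\end{itemize}
I would state the result first for nonnegative $\varphi_i$ and then remark on the general case.
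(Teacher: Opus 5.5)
Your argument is essentially the paper's: the paper applies the mean value theorem to each $\varphi_i$-weighted average separately (via \cite[Lemma 3.7]{PBG23}) to replace $M_{C^0}(g,\varphi_i,r_i)$ by the flux through a single sphere $\S(\ell_i r_i)$, and then applies Bartnik's divergence identity once. This is equivalent to your rectangle argument because $m(u')-m(u)$ separates. The paper also tacitly needs both caveats you raise. The restriction to one-signed $\varphi_i$ is genuinely necessary: with sign-changing $\varphi_1$ and $\int\varphi_1$ small, the left side can in general be made arbitrarily large while the right side stays bounded. The other is the $h*\nabla^2 h$ terms hidden in the schematic $\nabla g*\nabla g$. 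One small slip: $r_2>1.1r_1$ does not force $u<u'$ on the whole rectangle (that needs $.9r_2\geq 1.1r_1$), but nothing breaks if you read $\int_{A(0,u,u')}$ as the signed integral $\int_u^{u'}\int_{\S(s)}(R+\mathcal{E})\,dS\,ds$, which is still continuous in $(u,u')$, and the paper's stated conclusion has the same ambiguity.
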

\begin{proof}
This is essentially due to Bartnik \cite{Bartnik86}. By, say, \cite[Lemma 3.7]{PBG23} and Remark \ref{rmk:C0agreeswithC1average} we have that for $i = 1,2$ there exist $\ell_i \in [.9, 1.1]$ such that 
\begin{align*}
M_{C^0}(g, \varphi_i, r_i) &= (4\pi(n-1)\omega_{n-1})^{-1}\frac{\int_{.9}^{1.1}\int_{\S(ru)}(\partial_j g_{ij} - \partial_i g_{jj})\nu^idS\varphi_i(u)du}{\int_{.9}^{1.1}\varphi_i(\ell)d\ell}
\\& = (4\pi(n-1)\omega_{n-1})^{-1}\int_{\S(\ell-i r_i)}(\partial_j g_{ij} - \partial_i g_{jj})\nu^idS
\end{align*}
so, calculating as in \cite{Bartnik86} or \cite[(3.21)]{PBG23}, we have
\begin{equation*}
M_{C^0}(g, \varphi_2, r_2) - M_{C^0}(g, \varphi_1, r_1) = (4\pi(n-1)\omega_{n-1})^{-1}\int_{A(0, \ell_1 r_1, \ell_2 r_2)} R(g) + \nabla g * \nabla g.
\end{equation*}
\end{proof}

\begin{lemma}\label{lemma:finitemass}
Let $g$ be a $C^0$-asymptotically flat metric on $\R^n$ with decay rate $\tau \in (\tfrac{n-2}{2}, n-2)$ such that $|| g - \delta||_{C^0(\R^n)} < \varepsilon''$, where $\varepsilon'' = \varepsilon''(n, \tau)$ is as in Corollary \ref{cor:smoothdecay}. Let $\eta \in (0, 2\tau - n + 2)$ and let $(g_t)_{t>0}$ be the Ricci-DeTurck flow starting from $g$ given by Lemma \ref{lemma:KL}. Suppose $g$ has nonnegative scalar curvature in the sense of Ricci flow. If $M_{C^0}(g)$ is finite then for any $\ell_1 \in (0,1)$ and $\ell_2 > \ell_1$ we have
\begin{equation*}
\lim_{r\to\infty} \int_{A(0, \ell_1 r, \ell_2 r)} R(g_{r^{2-\eta}}) = 0.
\end{equation*}
\end{lemma}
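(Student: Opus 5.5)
The plan is to apply Bartnik's identity, Lemma \ref{lemma:Bartnikmonotonicity}, to the single smooth time slice $g_{r^{2-\eta}}$ on an annulus containing $A(0,\ell_1 r,\ell_2 r)$. Then we control each term: the two local masses by Lemma \ref{lemma:massdistortionestimate}, and the gradient error by Corollary \ref{cor:smoothdecay}. Positivity of $R$, from Lemma \ref{lemma:RFNNSC}, lets us pass from the larger annulus to the desired one.

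Fix a smooth cutoff $\varphi\geq 0$ with $\supp(\varphi)\subset\subset(.9,1.1)$, and set $t_r := r^{2-\eta}$. Choose radii $r_1 := \ell_1 r/1.1$ and $r_2 := \ell_2 r/.9$. Then for $r$ large we have $r_2>1.1r_1$, and for any $\ell_1',\ell_2'\in[.9,1.1]$
\begin{equation*}
A(0,\ell_1 r,\ell_2 r)\subset A(0,\ell_1' r_1,\ell_2' r_2).
\end{equation*}
Since $g_{t_r}$ is smooth on all of $\R^n$, Lemma \ref{lemma:Bartnikmonotonicity} applies. By Lemma \ref{lemma:RFNNSC} we have $R(g_{t_r})\ge 0$, so
\begin{equation*}
0\le \int_{A(0,\ell_1 r,\ell_2 r)} R(g_{t_r}) \le c\big(M_{C^0}(g_{t_r},\varphi,r_2)-M_{C^0}(g_{t_r},\varphi,r_1)\big) + c\int_{A(0,.9r_1,1.1r_2)}|\nabla g_{t_r}|^2 .
\end{equation*}

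The error term is handled by Corollary \ref{cor:smoothdecay} with $m=1$. On the annulus we have $|x|\sim r$, so $|\nabla g_{t_r}|\le c\, r^{-\tau}t_r^{-1/2}$. Integrating over a region of volume $\sim r^n$ gives the bound
\begin{equation*}
c\, r^{\,n-2\tau-2+\eta},
\end{equation*}
which tends to $0$ precisely because $\eta<2\tau-n+2$. This is where the hypothesis on $\eta$ enters.

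It remains to show that both local masses converge to the finite number $M_{C^0}(g)$. For $r_1$ the natural time scale in Lemma \ref{lemma:massdistortionestimate} is $r_1^{2-\eta_1}$, so we write $t_r = r_1^{2-\eta_1(r)}$. Explicitly,
\begin{equation*}
\eta_1(r) = 2-(2-\eta)\frac{\log r}{\log r - \log(1.1/\ell_1)} \longrightarrow \eta ,
\end{equation*}
and similarly $t_r = r_2^{2-\eta_2(r)}$ with $\eta_2(r)\to\eta$. Take $\bar\eta<\eta$ fixed, so that $\eta_i(r)>\bar\eta$ for all large $r$. The distortion bound in Lemma \ref{lemma:massdistortionestimate} depends only on $\bar\eta$ and decays like $r_i^{\,n-2-2\tau}$, so
\begin{equation*}
\big|M_{C^0}(g_{t_r},\varphi,r_i)-M_{C^0}\big(g_0,\varphi_{r_i^{-\eta_i}}(0),r_i\big)\big|\to 0 .
\end{equation*}

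The main obstacle is that $M_{C^0}(g_0,\varphi_{r_i^{-\eta_i}}(0),r_i)\to M_{C^0}(g)$ with a moving exponent $\eta_i(r)$, whereas Theorem \ref{thm:fullmassexistence} and the remark on independence of $\eta$ treat a fixed $\eta$. To handle it, I would compare with the fixed exponent $\bar\eta$. Run the flow from $g_0$ up to time $r_i^{2-\bar\eta}$ with cutoff $\varphi_{r_i^{-\bar\eta}}$, and also from $r_i^{2-\bar\eta}$ up to $r_i^{2-\eta_i}$. Since $\varphi_{r_i^{-\eta_i}}(\cdot,s)$ restricted to $s\le r_i^{-\bar\eta}$ is again such a backward solution, the same $\int|\frac{d}{dt}M_{C^0}|\,dt$ estimate (uniform in exponents $\ge\bar\eta$) shows that the two choices differ by $O(r_i^{\,n-2-2\tau})$. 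Both masses therefore tend to $M_{C^0}(g)<\infty$, their difference tends to $0$, and combined with the error estimate this gives $\int_{A(0,\ell_1 r,\ell_2 r)}R(g_{r^{2-\eta}})\to 0$.
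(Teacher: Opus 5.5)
\textbf{There is a gap in the final step.} Your setup is fine and matches the paper: Bartnik's identity on the single slice $g_{r^{2-\eta}}$, the choice of radii, $R(g_t)\ge 0$ to shrink the annulus, and the gradient error $c\,r^{n-2-2\tau+\eta}$. Your first distortion step, which uses uniformity in exponents $>\bar\eta$, is also fine.

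The problem is passing from $M_{C^0}(g_0,\varphi_{r_i^{-\eta_i}}(0),r_i)$ to $M_{C^0}(g)$, and this step carries the whole content of the lemma. Because $r_1,r_2$ are fixed multiples of $r$, Bartnik's identity makes convergence of $M_{C^0}(g_{t_r},\varphi,r_i)$ essentially equivalent to the conclusion.

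Your comparison with $\bar\eta$ does not work as written.
\begin{itemize}
\item Since $\bar\eta<\eta_i$, you have $r_i^{-\bar\eta}>r_i^{-\eta_i}$. So $\varphi_{r_i^{-\eta_i}}$ lives on the \emph{shorter} interval, and ``restricting to $s\le r_i^{-\bar\eta}$'' has no content. Likewise ``from $r_i^{2-\bar\eta}$ up to $r_i^{2-\eta_i}$'' runs backwards in time.
\item The correct relation is the time shift $\varphi_{r_i^{-\eta_i}}(\cdot,s)=\varphi_{r_i^{-\bar\eta}}(\cdot,s+s_0)$ with $s_0=r_i^{-\bar\eta}-r_i^{-\eta_i}$. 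Even this does not help. Lemma \ref{lemma:massdistortionestimate} only controls $M_{C^0}(g_t,\varphi_{r^{-\bar\eta}}(\cdot,t/r^2),r)$, with metric time locked to cutoff time.
\item The quantity $M_{C^0}(g_0,\varphi_{r_i^{-\eta_i}}(0),r_i)=M_{C^0}(g_0,\varphi_{r_i^{-\bar\eta}}(\cdot,s_0),r_i)$ pairs the \emph{initial} metric with a shifted cutoff. Getting back onto the locked diagonal means comparing the same cutoff at metric times $0$ and $r_i^2s_0\approx r_i^{2-\bar\eta}$. Equivalently, you must compare $M_{C^0}(g_{r_i^{2-\bar\eta}},\varphi,r_i)$ with $M_{C^0}(g_{r_i^{2-\eta_i}},\varphi,r_i)$.
\item With a fixed cutoff, the linear term in $\tfrac{d}{dt}M_{C^0}$ is exactly what the evolving cutoff was built to cancel. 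Nothing gives an $O(r_i^{n-2-2\tau})$ bound for it.
\end{itemize}

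\textbf{How the paper avoids this.} It never uses moving exponents. It fixes $\eta'<\eta$ and takes $R_1=\ell_1r/1.2$, $R_2=\ell_2r/.8$, so that $R_i^{2-\eta'}\ge r^{2-\eta}$ for large $r$. It then stops the $\eta'$-flows at the \emph{intermediate} time $r^{2-\eta}$. The intermediate cutoffs $\varphi_{R_i^{-\eta'}}(\cdot,r^{2-\eta}/R_i^2)$ go into Lemma \ref{lemma:Bartnikmonotonicity}, which allows arbitrary cutoffs. The total-variation bound on $[0,R_i^{2-\eta'}]$ covers $[0,r^{2-\eta}]$.

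\textbf{A repair that keeps your terminal cutoff $\varphi$.} Choose the radii to match fixed exponents instead of $\ell_i r$.
\begin{itemize}
\item Take $0<\eta_a<\eta<\eta_b<2\tau-n+2$, and $\rho_a,\rho_b$ with $\rho_a^{2-\eta_a}=\rho_b^{2-\eta_b}=r^{2-\eta}$.
\item Both local masses then have fixed-exponent form, so they tend to $M_{C^0}(g)$.
\item Since $\rho_a/r\to0$ and $\rho_b/r\to\infty$, the annulus $A(0,\ell_1r,\ell_2r)$ lies inside the Bartnik annulus.
\item The gradient error is at most $c\,r^{-(2-\eta)}\max\{\rho_a^{n-2\tau},\rho_b^{n-2\tau}\}$, times $\log r$ if $n=2\tau$. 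This still tends to zero because $\eta_b<2\tau-n+2$.
\end{itemize}
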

We refer the reader to \cite[Theorem 2.9 and Theorem 7.3]{PBG23} for a stronger version of Lemma \ref{lemma:finitemass}, but for our purposes Lemma \ref{lemma:finitemass} is sufficient.
\begin{proof}
Since $M_{C^0}(g)$ is finite, 
\begin{equation*}
\lim_{r\to \infty} M_{C^0}(g, \varphi_{(\ell_1r/1.2)^{-\eta}}(0), \ell_1 r/1.2) - M_{C^0}(g, \varphi_{(\ell_2r/.8)^{-\eta}}(0), \ell_2 r/.8) = 0.
\end{equation*}
Pick $\eta' \in (0, \eta)$ so that for $r$ sufficiently large (depending on $\eta$ and $\eta'$) $r^{2-\eta} \leq (\ell_1 r/1.2)^{2-\eta'}$. 
Then, for fixed large $r$, by Lemma \ref{lemma:Bartnikmonotonicity} there exist $\ell_1', \ell_2' \in [.9, 1.1]$ such that after applying Lemma \ref{lemma:massdistortionestimate} and Corollary \ref{cor:smoothdecay} we have 
\begin{align*}
& M_{C^0}(g, \varphi_{(\ell_1r/1.2)^{-\eta'}}(0), \ell_1 r/1.2 ) - M_{C^0}(g, \varphi_{(\ell_2r/.8)^{-\eta'}}(0), \ell_2 r ) 
\\&= M_{C^0}(g_{r^{2-\eta}}, \varphi_{(\ell_1r/1.2)^{-\eta'}}(\tfrac{r^{2-\eta}}{(\ell_1 r/1.2)^2}), \ell_1 r/1.2 ) - M_{C^0}(g_{r^{2-\eta}}, \varphi_{(\ell_2r/.8)^{-\eta'}}(\tfrac{r^{2-\eta}}{(\ell_2 r/.8)^2}), \ell_2 r ) + cr^{n-2 - 2\tau}
\\& = \int_{A(0, \ell_1'\ell_1 r/1.2, \ell_2'\ell_2 r/.8)} R(g_{r^{2-\eta}}) + \nabla g_{r^{2-\eta}} * \nabla g_{r^{2-\eta}}
\\& \leq \int_{A(0, \ell_1'\ell_1 r/1.2, \ell_2'\ell_2 r/.8)} R(g_{r^{2-\eta}})dx + c(\ell_2'\ell_2 r/.8)^n\frac{(\ell_1'\ell_1 r/1.2)^{-2\tau}}{r^{2-\eta}}
\\& \leq \int_{A(0, \ell_1'\ell_1 r/1.2, \ell_2'\ell_2 r/.8)} R(g_{r^{2-\eta}})dx + cr^{n-2  + \eta - 2\tau},
\end{align*}
so, letting $r\to \infty$, we have that
\begin{equation*}
\lim_{r\to \infty } \int_{A(0, \ell_1'\ell_1 r/1.2, \ell_2'\ell_2 r/.8)} R(g_{r^{2-\eta}})dx = 0.
\end{equation*}
Since $R(g_t)\geq 0$ for all $t>0$ by Lemma \ref{lemma:RFNNSC}, and since $A(0, \ell_1 r, \ell_2 r) \subseteq A(0, \ell_1'\ell_1 r/1.2, \ell_2'\ell_2 r/.8)$, it follows that 
\begin{equation*}
\lim_{r\to\infty} \int_{A(0, \ell_1 r, \ell_2 r)} R(g_{r^{2-\eta}}) = 0.
\end{equation*}
\end{proof}

\subsection{Weighted H\"older spaces}
Here we record some results about operators between weighted H\"older spaces. The following is \cite[Corollary A.42]{Lee19}:
\begin{theorem}\label{thm:LeeFredholm}
Let $(M^n, g)$ be a complete, connected, smooth, $C^2$-asymptotically flat manifold (so $g$ is smooth here), such that $R(g)\in L^1(M)$. Assume $n\geq 3$, and let $s$ be any real number not in the exceptional set $\Z \setminus (2-n, 0)$. 

Define the following weighted H\"older norms:
\begin{equation*}
||u||_{C_s^{k,\alpha}(M)} := \sum_{i=0}^{k} \sup_{x\in M} |\ell(x)^{i-s}\nabla^i u| + \sup_{x\in M} \ell(x)^{k+\alpha - s}[\nabla^k u]_{C^\alpha(B(x, |x|/2))}
\end{equation*}
where $\ell(x)$ is a smooth positive function on $M$ that agrees with $|x|$ (in coordinates) outside of a compact set, and
\begin{equation*}
[v]_{C^\alpha(U)} := \sup_{x,y\in U, x\neq y}\frac{|v(x) - v(y)|}{d_g(x,y)^\alpha}.
\end{equation*}

Consider an operator $L u:= -\Delta^{g} u + \langle V, \nabla u\rangle + qu$, where $V$ is a smooth section of $TM$ such that $V \in C^{0,\alpha}_{-1-\gamma}(M)$ and $q$ is a smooth function on $M$ such that $q\in C^{0,\alpha}_{-2-\gamma}(M)$ for some $\alpha\in (0,1)$ and $\gamma>0$. Then $L: C^{2,\alpha}_{s}(M)\to C^{0,\alpha}_{s-2}(M)$ is a Fredholm operator whose index is the same as that of the Euclidean Laplacian $\Delta^\delta: C^{2,\alpha}_{s}(\R^n)\to C^{0,\alpha}_{s-2}(\R^n)$.
\end{theorem}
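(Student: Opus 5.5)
The plan is the standard weighted-analysis argument: prove the result first for the Euclidean Laplacian, then treat $L$ as a perturbation that is small near infinity and compact on the rest of $M$, and finally fix the index by a homotopy and duality argument.

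\textbf{Step 1: the Euclidean model.} First I will show that $\Delta^\delta: C^{2,\alpha}_s(\R^n)\to C^{0,\alpha}_{s-2}(\R^n)$ is Fredholm when $s\notin \Z\setminus(2-n,0)$.
\begin{enumerate}
\item Applying interior Schauder estimates on dyadic annuli $A(0,\rho,2\rho)$, after rescaling each to unit size, gives
\begin{equation*}
||u||_{C^{2,\alpha}_s}\leq C\left(||\Delta u||_{C^{0,\alpha}_{s-2}} + ||u||_{C^0_s}\right).
\end{equation*}
\item To remove $||u||_{C^0_s}$ up to a compact term, I will argue by contradiction with a blow-up sequence. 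The rescaled limit is a harmonic function on $\R^n\setminus\{0\}$ bounded by $|x|^s$. Expanding in spherical harmonics, such a function is a combination of $|x|^k Y_k$ and $|x|^{2-n-k}Y_k$. If $s$ avoids the exceptional set, it cannot have growth exactly $|x|^s$ both at $0$ and at $\infty$, which gives the contradiction.
\item The output is the estimate $||u||_{C^{2,\alpha}_s}\leq C(||\Delta u||_{C^{0,\alpha}_{s-2}}+||u||_{C^0(B(0,R))})$. Since $C^{2,\alpha}_s\to C^0(B(0,R))$ is compact, this yields finite-dimensional kernel and closed range.
\item The cokernel is finite-dimensional by duality: it pairs with harmonic functions in the dual weight $C_{2-n-s}$, which are finite in number.
\end{enumerate}

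\textbf{Step 2: the same estimate for $L$ on $M$.} In the asymptotically flat chart I will write $L=\Delta^\delta + P$. The operator $P$ has second-order coefficients of size $O(|x|^{-\tau})$ (from $g-\delta$), first-order coefficients in $C^{0,\alpha}_{-1-\gamma}$ (from $\Gamma(g)$ and $V$), and zeroth-order coefficient $q\in C^{0,\alpha}_{-2-\gamma}$.
\begin{enumerate}
\item Hence $||P(\chi_R u)||_{C^{0,\alpha}_{s-2}}\leq \epsilon(R)||u||_{C^{2,\alpha}_s}$ with $\epsilon(R)\to 0$, where $\chi_R$ is a cutoff supported in $\{|x|>R\}$.
\item I will patch the Euclidean estimate for $\chi_R u$ with interior Schauder estimates on the compact region $\{\ell\leq 2R\}$. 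For $R$ large the $\epsilon(R)$ term absorbs, giving
\begin{equation*}
||u||_{C^{2,\alpha}_s(M)}\leq C\left(||Lu||_{C^{0,\alpha}_{s-2}}+||u||_{C^0(K_R)}\right).
\end{equation*}
\item As in Step 1, this gives finite-dimensional kernel and closed range.
\item For the cokernel I will build a parametrix $G = \chi G_{\R^n}\chi' + \psi G_{K}\psi'$. Here $G_{\R^n}$ is a generalized inverse of $\Delta^\delta$ from Step 1, and $G_K$ is a local inverse on a compact domain containing the core, for instance a Dirichlet inverse. The errors $LG-I$ and $GL-I$ involve either commutators with compactly supported cutoffs or $P$. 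Their coefficients carry the extra decay $\gamma$, so they map into $C^{0,\alpha'}_{s-2-\gamma'}$, which embeds compactly into $C^{0,\alpha}_{s-2}$. So the errors are compact and $L$ is Fredholm.
\end{enumerate}

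\textbf{Step 3: the index.} The index is locally constant along norm-continuous paths of Fredholm operators. Using Step 2, which applies uniformly along the path, I will deform $V$ and $q$ to $0$. I will also deform $g$, in the compact region only, to a fixed reference metric; this changes $L$ only by a compactly supported operator, hence by a compact perturbation. It then remains to compare $-\Delta^{g}$ on $M$ with $-\Delta^\delta$ on $\R^n$.

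For this comparison I will use duality. The cokernel of $L$ on $C^{2,\alpha}_s$ is identified, via Green's identity, with the kernel of $L^*$ on $C^{2,\alpha}_{2-n-s}$. Both kernels are then counted by the asymptotic expansions of harmonic functions, which are modeled at infinity on harmonic polynomials of degree less than $s$ and on the corresponding decaying multipole terms. Solving Dirichlet-type problems with prescribed leading asymptotics, via the parametrix and Fredholm alternative, shows that each admissible Euclidean growth mode is realized exactly once on $M$. This gives $\operatorname{ind}(L)=\operatorname{ind}(\Delta^\delta)$.

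I expect Step 3 to be the main obstacle. The difficulty is making the excision principle rigorous: the index should depend only on the operator near infinity, even though $M$ and $\R^n$ differ topologically in the compact part. I would first try a relative-index formula,
\begin{equation*}
\operatorname{ind}(L_M)-\operatorname{ind}(\Delta^\delta_{\R^n}) = \operatorname{ind}(L_{\text{closed}})-\operatorname{ind}(\Delta_{S^n}),
\end{equation*}
obtained by gluing the compact parts of $M$ and $\R^n$ to closed manifolds and cutting along a large sphere. Both closed-manifold indices vanish, since Laplace-type operators on closed manifolds are self-adjoint up to compact perturbation, so the two indices agree.
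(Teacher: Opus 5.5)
The paper gives no argument for this statement; it is quoted from \cite[Corollary A.42]{Lee19}, so your outline has to stand on its own. Its architecture is the standard one: weighted Schauder estimates plus a blow-up argument for the semi-Fredholm estimate, perturbation near infinity, and homotopy invariance for the index. Steps 1(1)--(3) and 2(1)--(3) are fine. However, two compactness claims are false.

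In Step 2(4) the error $\chi P G_{\R^n}\chi'$ contains $(g^{ij}-\delta^{ij})\partial_i\partial_j(G_{\R^n}\chi' f)$. Since $G_{\R^n}\chi' f$ is only in $C^{2,\alpha}_s$, this lies in $C^{0,\alpha}_{s-2-\tau}$ with no gain in H\"older exponent (the decay here comes from $\tau$, not $\gamma$). Extra decay alone gives no compactness: $u_k(x)=k^{-2-\alpha}\phi(k(x-x_0))$ is bounded in $C^{2,\alpha}_s$, yet $(g^{ij}-\delta^{ij})\partial_i\partial_j u_k$ has no $C^{0,\alpha}$-convergent subsequence when $g(x_0)\neq\delta$. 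For the same reason, in Step 3, changing the metric on a compact set is a compactly supported second-order perturbation, which is not compact.

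Both points can be repaired with what you already have. Step 2(1) makes this principal-part error $S$ small, so $LG=I+S+K$ with $\|S\|_{\op}<1$ and $K$ compact. Here $K$ collects the cutoff commutators, the finite-rank defect of $G_{\R^n}$, and the lower-order terms, which do gain a derivative and decay. Hence $L$ has range of finite codimension. Metric changes should be handled by homotopy invariance along a norm-continuous path of operators that are Fredholm by Step 2. The useful deformation is near infinity, e.g.\ $(1-\theta\chi_R)g+\theta\chi_R\delta$ with $\chi_R$ supported in the end, which makes $g\equiv\delta$ there. Deforming in the compact region buys nothing.

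The genuine gap is Step 3 itself. The equality $\operatorname{ind}(L)=\operatorname{ind}(\Delta^\delta)$ is exactly the excision statement you defer, and neither of your two routes is carried out.

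The counting route identifies the cokernel on $C^{2,\alpha}_s$ with $\ker L^*$ on $C^{2,\alpha}_{2-n-s}$. But Green's identity only shows that $\ker L^*$ annihilates the range. The converse, that every functional vanishing on the range is integration against such a kernel element, is not automatic in H\"older spaces, whose duals are not function spaces. It needs either a detour through weighted $L^p$ spaces plus regularity, or an inductive existence argument across the exceptional weights. The same objection applies to Step 1(4).

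The relative index formula is correct, and it is provable here once $g\equiv\delta$ and $V=q=0$ on the end. Parametrices are then exact there up to finite rank. So every error in the cut-and-swap argument is a commutator with a cutoff, which gains a derivative and is compact. Since this is the entire content of the index claim, it must be proved, not invoked.

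For $2-n<s<0$, the only range the paper uses, no excision is needed. After homotoping $V,q$ to $0$, the kernel of $-\Delta^g$ vanishes by the maximum principle. The cokernel vanishes by the dual argument at weight $2-n-s\in(2-n,0)$, once that duality is justified. So both indices are $0$.
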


\begin{remark}\label{rmk:normsequivalent}
Note that as stated, the norms $||\cdot||_{C_s^{2,\alpha}(M)}$ in Theorem \ref{thm:LeeFredholm} depend on choice of asymptotically flat metric. However, if $g$ is a $C^2$-asymptotically flat metric on $\R^n$ that is uniformly $(1+\lambda)$-bilipschitz  to the Euclidean metric $\delta$ for some $\lambda < 1$, then there exist constants $c = c(\lambda, c_1, \alpha, s, n)$ and $C = C(\lambda, c_1, \alpha, s, n)$ depending only on $\lambda$, $\alpha$, $s$, $n$, and a constant $c_1$ satisfying
\begin{equation*}
\sup_{x\in \R^n} \ell(x)|\nabla^\delta g(x)| + \ell(x)^2|(\nabla^\delta)^2 g(x)| \leq c_1
\end{equation*}
such that for all $u$,
\begin{equation*}
c||u||_{C_s^{2,\alpha}(\R^n, \delta)} \leq || u||_{C_s^{2,\alpha}(\R^n, g)} \leq C || u||_{C_s^{2,\alpha}(\R^n, \delta)}.
\end{equation*}
In particular, the weighted norms $||\cdot||_{C_s^{2,\alpha}(\R^n)}$ with respect to $g$ and $\delta$ are equivalent.

To see why this is the case, first note that for any $x\neq 0$, there exists $N(n)$-many balls of the form $B_\delta(z, |x|/8)$, where $z\in B_\delta(x, 3|x|/4)$, that form a cover of $B_\delta(x, 3|x|/4)$. Then replacing each ball in the finite cover with $B_\delta(z, |z|/2)$ also yields a finite cover of $B_\delta(x, 3|x|/4)$ (since, if $z\in B_\delta(x, 3|x|/4)$ then $|z|/2 \geq |x|/8$ so this procedure only enlarges each ball in the cover). Then, because $\lambda < 1$, $\sqrt{1+\lambda}/2 < 3/4$ this is also a cover of $B(x, \sqrt{1+\lambda}|x|/2)$. Therefore, for any function $f: \R^n\times \R^n \to \R$ we have
\begin{equation}
\begin{split}
\sup_{x\in \R^n}|x|^{\alpha + 2-s}\sup_{y,w\in B_g(x, \sqrt{1+\lambda}|x|/2)} f(y,w) & \leq \sup_{x\in \R^n}\ell(x)^{\alpha + 2-s}\sup_{y,w\in B_\delta(x, \sqrt{1+\varepsilon}|x|/2)} f(y,w)
\\&\leq \sup_{x\in \R^n}\ell(x)^{\alpha + 2-s}\max\left\{\sup_{y,w\in B_\delta(z_i, |z_i|/2)} f(y, w): i = 1, 2, \ldots, N(n)\right\}
\\& \leq \sup_{x\in \R^n}c\ell(z_i)^{2+\alpha - s}\max\left\{\sup_{y,w\in B_\delta(z_i, |z_i|/2)} f(y, w): i = 1, 2, \ldots, N(n)\right\}
\\& \leq c\sup_{x\in \R^n}\ell(x)^{2+\alpha - s}\sup_{y,w \in B_\delta(|x|, |x|/2)}f(y,w)
\end{split}
\end{equation} 
where, note, the $z_i$ depend on $x$, and $c$ is either $(1/4)^{2+\alpha - s}$ or $(7/4)^{2+\alpha - s}$ depending on the sign of $2 + \alpha - s$.

We then have
\begin{align*}
||u||_{C_s^{2,\alpha}(\R^n, g)} &= \sup_{x\in \R^n} |\ell(x)^{-s}u(x)| + \sup_{x\in \R^n}|\ell(x)^{-s + 1}du|_{g} + \sup_{x\in \R^n}|\ell(x)^{2-s} (\nabla^g)^2u|_{g} 
\\& + \sup_{x\in \R^n}\ell(x)^{\alpha + 2-s}\sup_{y,z\in B_{g}(x, |x|/2), y\neq z}\frac{|(\nabla^g)^2 u(y) - (\nabla^g)^2 u(z) |}{d_{g}(y,z)^\alpha}
\\ & \leq \sup_{x\in \R^n} |\ell(x)^{-s}u(x)| + c(\lambda)\sup_{x\in \R^n}|\ell(x)^{-s + 1}du|_{\delta} + c(\lambda)\sup_{x\in \R^n}|\ell(x)^{2-s} (\nabla^{\delta})^2u|_{\delta} 
\\& + (1+\varepsilon)c(n,\varepsilon)\sup_{x\in \R^n}|\ell(x)^{2-s} |\nabla^{\delta}g| |\nabla^{\delta}u||_{\delta}
\\& + c(\lambda, \alpha)\sup_{x\in \R^n}|x|^{\alpha + 2-s}\sup_{y,z\in B_{\delta}(x, |x|/2), y\neq z}\frac{|(\nabla^{g})^2 u(y) - (\nabla^{g})^2 u(z) |}{d_{\delta}(y,z)^\alpha}
\\ & \leq \sup_{x\in \R^n} |\ell(x)^{-s}u(x)| + c(\lambda)\sup_{x\in \R^n}|\ell(x)^{-s + 1}du|_{\delta} + c(\lambda)\sup_{x\in \R^n}|\ell(x)^{2-s} (\nabla^{\delta})^2u|_{\delta} 
\\& + c(n,\lambda)\sup_{x\in \R^n}|\ell(x)^{2-s}\nabla^\delta g(x) * \nabla u(x)|
\\& + c(n,\lambda, \alpha) \sup_{x\in \R^n}\ell(x)^{2+\alpha - s}\sup_{y,z\in B_\delta(x, |x|/2), y\neq z} \frac{|(\nabla^\delta)^2u(y) - (\nabla^{\delta})^2u(z)|_\delta}{d_\delta^\alpha(y,z)}
\\& + c(n, \lambda, \alpha)\sup_{x\in \R^n}\ell(x)^{2+\alpha - s}\sup_{y,z\in B_\delta(x, |x|/2), y\neq z} \frac{|\nabla^\delta g*\nabla u(y) - \nabla^\delta g * \nabla u(z)|_\delta}{d_\delta^\alpha(y,z)}
\\& \leq c(\lambda)||u||_{C^{2,\alpha}_s(\R^n, \delta)} + c(n,\lambda, c_1)\sup_{x\in \R^n}|\ell(x)^{1-s}* \nabla u(x)|_\delta
\\& + c(n,\lambda,\alpha)\sup_{x\in \R^n}\ell(x)^{2+\alpha - s}\sup_{y,z\in B_\delta(x, |x|/2), y\neq z} \frac{|(\nabla^\delta)^2u(y) - (\nabla^{\delta})^2u(z)|_\delta}{d_\delta^\alpha(y,z)}
\\& + c(n, \lambda, \alpha)\sup_{x\in \R^n}\ell(x)^{2+\alpha - s}\sup_{y,z\in B_\delta(x, |x|/2), y\neq z} \frac{|\nabla^\delta g*\nabla u(y) - \nabla^\delta g * \nabla u(z)|_\delta}{d_\delta^\alpha(y,z)}
\\& \leq c(n,\lambda, \alpha, c_1)||u||_{C^{2,\alpha}_s(\R^n, \delta)} + c(n,\lambda, \alpha, c_1)\ell(x)^{1+\alpha - s}\sup_{y,z\in B_\delta(x, |x|/2), y\neq z} \frac{|\nabla u(y) - \nabla u(z)|_\delta}{d_\delta^\alpha(y,z)}
\\& \leq c(n,\lambda, \alpha, c_1)||u||_{C^{2,\alpha}_s(\R^n, \delta)} 
\\& + c(n,\lambda, \alpha, c_1)\ell(x)^{1+\alpha - s}\sup_{y,z\in B_\delta(x, |x|/2), y\neq z}|| (\nabla^\delta)^2 u||_{L^\infty(B_\delta(x, |x|/2))}d_\delta(y,z)^{1-\alpha}
\\& \leq c(n,\lambda, \alpha, c_1)||u||_{C^{2,\alpha}_s(\R^n, \delta)} 
\\& + c(n,\lambda, \alpha, c_1)\ell(x)^{2 - s}\sup_{y,z\in B_\delta(x, |x|/2), y\neq z}|| (\nabla^\delta)^2 u||_{L^\infty(B_\delta(x, |x|/2))}
\\& \leq c(n,\lambda, \alpha, c_1)||u||_{C^{2,\alpha}_s(\R^n, \delta)}.
\end{align*}
The other inequality is similar.
\end{remark}

We also use the following standard result:
\begin{lemma}\label{lemma:isomorphismthreshold}
Let $T: X\to Y$ be an isomorphism between Banach spaces. A bounded linear operator $S:X\to Y$ is also an isomorphism if $||S - T||_{\op} < \inf_{||u||_X=1}||Tu||_{Y} =: \mod(T)$.
\end{lemma}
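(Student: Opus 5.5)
The plan is the standard Neumann-series perturbation argument. Since $T$ is an isomorphism of Banach spaces, the bounded inverse theorem gives that $T^{-1}:Y\to X$ is bounded. Moreover
\begin{equation*}
\|T^{-1}\|_{\op} = \sup_{\|y\|_Y=1}\|T^{-1}y\|_X = \Big(\inf_{\|u\|_X=1}\|Tu\|_Y\Big)^{-1} = \mod(T)^{-1}.
\end{equation*}
To see the middle equality, substitute $y = Tu/\|Tu\|_Y$, which is legitimate because $T$ is a bijection. In particular $\mod(T)>0$.

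Next write
\begin{equation*}
S = T + (S-T) = T\big(\id_X + T^{-1}(S-T)\big).
\end{equation*}
The operator $K := T^{-1}(S-T): X\to X$ satisfies
\begin{equation*}
\|K\|_{\op}\le \|T^{-1}\|_{\op}\,\|S-T\|_{\op} = \frac{\|S-T\|_{\op}}{\mod(T)} < 1
\end{equation*}
by hypothesis. Since $X$ is complete, the Neumann series $\sum_{k\ge 0}(-K)^k$ converges absolutely in operator norm. Its sum is a bounded two-sided inverse of $\id_X + K$, as one checks by telescoping the partial sums.

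It follows that $S$ is a composition of two isomorphisms, $\id_X + K: X\to X$ and $T: X\to Y$, and hence is itself an isomorphism. Its inverse is
\begin{equation*}
S^{-1} = (\id_X+K)^{-1}T^{-1}.
\end{equation*}
This also gives the quantitative bound
\begin{equation*}
\|S^{-1}\|_{\op} \le \big(\mod(T) - \|S-T\|_{\op}\big)^{-1}.
\end{equation*}
That bound is what one would use later in the paper to control the uniformity of inverses for perturbed operators such as the conformal Laplacians of the glued metrics.

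There is no real obstacle here. The only point needing care is the identification $\|T^{-1}\|_{\op} = \mod(T)^{-1}$, which uses surjectivity of $T$ and the open mapping theorem.
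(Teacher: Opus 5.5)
Your Neumann-series argument is correct and is the standard proof. The paper gives no proof of its own and simply quotes the lemma as a standard result. Your bound $\|S^{-1}\|_{\op}\le(\mod(T)-\|S-T\|_{\op})^{-1}$ is exactly what the paper later relies on, in the form $\mod(L^{\hat g_t})\ge \mod(\tilde L)-\|L^{\hat g_t}-\tilde L\|_{\op}$ together with $\|(L^{\hat g_t})^{-1}\|_{\op}=1/\mod(L^{\hat g_t})$, to get $t$-uniform control of $v^t$.
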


\section{Proof of Theorem \ref{thm:C0PMT}}\label{sec:approximationproof} 
The goal of this section is to prove Theorem \ref{thm:masssequence}. The following are fixed throughout this section:
\begin{itemize}
\item $\alpha \in (0,1)$ some fixed H\"older exponent.
\item $n\geq 3$ the dimension.
\item $\tau \in (\tfrac{n-2}{2}, n-2)$.
\item $\eta \in (0, \min\{ \tfrac{2\tau - n + 2}{2}, 1, \tfrac{2}{3}\tau\})$.
\item $\bar \varepsilon = \bar \varepsilon(n, \alpha, \tau)>0$, a positive number that is less than or equal to $\min \{\varepsilon, \varepsilon', \varepsilon'', \varepsilon'''\}$ where $\varepsilon $ is as in Lemma \ref{lemma:KL}, $\varepsilon' = \varepsilon'(n,\tau)$ is as in Lemma \ref{lemma:RDTFC0AF}, $\varepsilon'' = \varepsilon''(n,\tau)$ is as in Corollary \ref{cor:smoothdecay}, and $\varepsilon''' = \varepsilon'''$ is as in Lemma \ref{lemma:RDTFL2distortion}. We specify $\bar \varepsilon$ in the proof of Lemma \ref{lemma:conformaldeformation}.
\item $g$ a $C^0$-asymptotically flat metric on $\R^n$ with decay rate $\tau$, such that $|| g - \delta||_{L^\infty(\R^n)} < \bar \varepsilon$.
\item $(g_t)_{t>0}$ a Ricci-DeTurck flow starting from $g$ in the sense of Lemma \ref{lemma:KL}. Because of the conditions on $\bar \varepsilon$, the estimates of Lemma \ref{lemma:RDTFC0AF}, Corollary \ref{cor:smoothdecay}, and Lemma \ref{lemma:massdistortionestimate} hold. Moreover, by Lemma \ref{lemma:KL}, for all $t>0$, $|| g_t - \delta||_{L^\infty(\R^n)} \leq c_n \bar \varepsilon$.
\item $\varphi$ a smooth cutoff function $\R \to \R^{\geq 0}$ with $\supp(\varphi)\subset\subset (.9, 1.1)$.
\item $T = T(n, \tau, \eta, \alpha, \supp(\varphi), || \rho^\tau (g - \delta)||, \ell) >1$, where $\rho$ is as in Lemma \ref{lemma:RDTFC0AF} and $\ell$ is as in Theorem \ref{thm:LeeFredholm}, such that $\sqrt{T} \geq \bar r$, where $\bar r$ is the quantity from Lemma \ref{lemma:massdistortionestimate}, and $\ell(x) = |x|$ for all $|x| \geq \sqrt{T}/2$.
\item $\chi$ a nonnegative smooth cutoff function that is identically equal to $1$ on $B(0,1/2)$ and identically equal to 0 outside of $B(0,2)$, such that $|\nabla \chi| \leq 10$, and, for $t> 1$, $\chi_t(y) := \chi(y/\sqrt{t})$ so that $\chi_t\equiv 1$ on $B(0, \sqrt{t}/2)$ and $\chi_t \equiv 0$ outside of $B(0, 2\sqrt{t})$, and for $k= 1,2,3,4$, $|\nabla^k \chi_t| \leq 10/t^{k/2}$, and
\item for $t>1$, $\hat g_t := \chi_t g_{t^{1-\eta/2}} + (1-\chi_t)\delta$, so that $\hat g_t$ is identically equal to $g_{t^{1-\eta/2}}$ on $B(0, \sqrt{t}/2)$ and identically Euclidean outside of $B(0, 2\sqrt{t})$.
\end{itemize}
\begin{remark}\label{rmk:gluedderivests}
By Corollary \ref{cor:smoothdecay} and the derivative estimates for $\chi_t$, we have that for $|x| > 1$ and $k = 0, 1, 2, 3, 4$
\begin{equation*}
|\nabla^k (\hat g_t - \delta)(x)| \leq \frac{c_{k,n}|x|^{-\tau}}{(t^{1-\eta/2})^{k/2}}
\end{equation*}
and also
\begin{equation*}
||\nabla^k \hat g_t||_{L^\infty(\R^n)} \leq \frac{c_{k,n}|| g - \delta||_{L^\infty(\R^n)} }{(t^{1-\eta/2})^{k/2}} < \frac{C_{k,n}\bar\varepsilon}{(t^{1-\eta/2})^{k/2}}.
\end{equation*}
\end{remark}

Throughout the rest of this section we let $c$ be a constant whose value changes from line to line, such that $c$ does not depend on $t$ or $r$ (eventually we let $t = r^2 \to \infty$).
\begin{lemma}\label{lemma:conformaldeformation}
For $t \geq T = T(n, \tau, \eta, \alpha, ||\rho^\tau (g - \delta)||_{L^\infty(\R^n)}, \ell)$ there exists a smooth positive solution $u^t$ to $L^{\hat g_t}u^t = \chi_tR(g_{t^{1-\eta/2}})$, where $L^{\hat g_t} := -\frac{4(n-1)}{n-2}\Delta^{\hat g_t} + R(\hat g_t)$ is the conformal Laplacian of $\hat g_t$, such that $u^t - 1\in C^{2,\alpha}_{-\tau}(\R^n)$. In particular, the metric $g_t' = (u^t)^{4/(n-2)}\hat g_t$ is $C^2$-asymptotically flat with decay rate $\tau$, with scalar curvature equal to $\chi_tR(g_{t^{1-\eta/2}})\geq 0$.
\end{lemma}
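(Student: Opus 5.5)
We write $u^t = 1 + v$ and solve for $v\in C^{2,\alpha}_{-\tau}(\R^n)$ the linear equation
\begin{equation*}
L^{\hat g_t} v = \chi_t R(g_{t^{1-\eta/2}}) - R(\hat g_t) =: f_t .
\end{equation*}
Since $\hat g_t = g_{t^{1-\eta/2}}$ on $B(0,\sqrt t/2)$ and $\hat g_t = \delta$ outside $B(0,2\sqrt t)$, the function $f_t$ is smooth and supported in the annulus $A(0,\sqrt t/2, 2\sqrt t)$. In particular $f_t\in C^{0,\alpha}_{-\tau-2}(\R^n)$. Moreover $\hat g_t$ is smooth, equal to $\delta$ outside a compact set, and hence trivially $C^2$-asymptotically flat with $R(\hat g_t)\in L^1$.

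\textbf{Invertibility by perturbation from the flat Laplacian.} The operator $-\tfrac{4(n-1)}{n-2}\Delta^\delta : C^{2,\alpha}_{-\tau}\to C^{0,\alpha}_{-\tau-2}$ is an isomorphism. This holds because $-\tau\in(2-n,0)$ avoids the exceptional set, and the index is $0$ with trivial kernel by the maximum principle. Let $\mu>0$ denote its minimum modulus, which depends only on $n,\tau,\alpha$ (and $\ell$). We write
\begin{equation*}
L^{\hat g_t} - \left(-\tfrac{4(n-1)}{n-2}\Delta^\delta\right) = -\tfrac{4(n-1)}{n-2}(\hat g^{ij}_t-\delta^{ij})\partial_i\partial_j + \hat g_t^{-1}* \nabla\hat g_t *\partial + R(\hat g_t).
\end{equation*}
We then bound its operator norm $C^{2,\alpha}_{-\tau}\to C^{0,\alpha}_{-\tau-2}$ using Remark~\ref{rmk:gluedderivests}. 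The second-order coefficient has sup bounded by $c\bar\varepsilon$, and its weighted $C^\alpha$ seminorm on $B(x,|x|/2)$ is at most $c|x|^{\alpha}\cdot|x|^{-\tau}t^{-(1-\eta/2)\alpha/2}$ (interpolating the $C^0$ and $C^1$ bounds), which is small. For the first-order coefficient, the weighted norm requires $|x|\,|\nabla\hat g_t|$ to be small. On the support $|x|\le 2\sqrt t$ we have $|x||\nabla \hat g_t|\le c\sqrt t\,|x|^{-\tau} t^{-(1-\eta/2)/2}\le c t^{\eta/4}|x|^{-\tau}$, and combined with the global bound $c\bar\varepsilon\, t^{\eta/4}$ this is not automatically small. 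Here I expect the \textbf{main obstacle}. To handle it I would split $|x|\le t^{\eta/(4\tau)+}$ versus larger $|x|$. For the large region we use the decay $|x|^{-\tau}$. For the small region we use the alternative, flow-time-based estimate $|\nabla g_s|\le c\bar\varepsilon s^{-1/2}$, which gives $|x||\nabla\hat g_t|\le c\bar\varepsilon\, t^{\eta/(4\tau)}t^{-(1-\eta/2)/2}\to0$; the restriction $\eta<\tfrac23\tau$ should make the exponents work. The zeroth-order term $R(\hat g_t)$ is treated the same way, using $|x|^2|\nabla^2\hat g_t|$ and $|x|^2|\nabla\hat g_t|^2$. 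The conclusion is an operator-norm bound of the form $c\bar\varepsilon + o_t(1)$. We choose $\bar\varepsilon$ so that $c\bar\varepsilon<\mu/2$, and then $T$ so that the $o_t(1)$ part is less than $\mu/2$. Lemma~\ref{lemma:isomorphismthreshold} then gives that $L^{\hat g_t}$ is an isomorphism, which yields $v$. Remark~\ref{rmk:normsequivalent} lets us move freely between the $\hat g_t$- and $\delta$-weighted norms if we invoke Theorem~\ref{thm:LeeFredholm} directly.

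\textbf{Regularity, positivity and the conclusion.} Smoothness of $u^t$ follows from elliptic regularity, since all coefficients are smooth. For positivity, note that the same smallness gives $\|v\|_{C^0}\le \|v\|_{C^{2,\alpha}_{-\tau}}\le c\mu^{-1}\|f_t\|_{C^{0,\alpha}_{-\tau-2}}$. On the annulus we have $|x|\sim\sqrt t$, and $|f_t|\le c(|\nabla^2\hat g_t|+|\nabla\hat g_t|^2)\le c|x|^{-\tau}t^{-(1-\eta/2)}$, with the Hölder part handled similarly. Hence $\|f_t\|_{C^{0,\alpha}_{-\tau-2}}\le c\, t^{(\tau+2)/2}t^{-\tau/2}t^{-1+\eta/2}=c\,t^{\eta/2}$.

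This bound is not small, so instead I would argue positivity by a maximum principle. We have $L^{\hat g_t}u^t=\chi_tR(g_{t^{1-\eta/2}})\ge0$ by Lemma~\ref{lemma:RFNNSC}, and $u^t\to1$ at infinity. If $\min u^t\le 0$, consider $s^* = \sup\{s: u^t>0 \text{ along } \hat g$-deformation$\}$. A cleaner route is to note that $-\Delta^{\hat g_t}+\tfrac{n-2}{4(n-1)}R(\hat g_t)$ has a positive Green's function. This holds because the operator is a small perturbation of the Laplacian in the above weighted sense, so its first eigenvalue on large balls stays positive. The solution with nonnegative right-hand side and boundary value $1$ at infinity is then positive. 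Finally, $g_t'=(u^t)^{4/(n-2)}\hat g_t$ has scalar curvature $(u^t)^{-(n+2)/(n-2)}L^{\hat g_t}u^t$. Note that $(u^t)^{-(n+2)/(n-2)}$ should multiply $\chi_tR$; I would adjust the equation to $L^{\hat g_t}u=\chi_tR\,u^{(n+2)/(n-2)}$, or simply state the scalar curvature as $(u^t)^{-\frac{n+2}{n-2}}\chi_tR\ge0$, which suffices. The decay $u^t-1\in C^{2,\alpha}_{-\tau}$ gives $C^2$-asymptotic flatness with rate $\tau$.
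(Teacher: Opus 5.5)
Your proof is correct, and it differs from the paper's in two places.

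\textbf{Invertibility.} The paper does not perturb directly off the flat Laplacian. It first shows that $\tilde L=-\tfrac{4(n-1)}{n-2}\Delta^{\hat g_t}+\chi_tR(g_{t^{1-\eta/2}})$ is an isomorphism: it has index zero by Theorem \ref{thm:LeeFredholm}, and it is injective by the maximum principle because its potential is nonnegative. It then bounds the minimum modulus of $\tilde L$ from below uniformly in $t$, using the same closeness-to-$\Delta^\delta$ estimate you sketch. Only after that does it treat $L^{\hat g_t}-\tilde L$, which is multiplication by a function supported in $A(0,\sqrt t/2,2\sqrt t)$, as a perturbation of norm $\le c\sqrt t^{-\tau+3\eta/2}$. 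The closeness estimate alone already makes $\tilde L$ invertible via Lemma \ref{lemma:isomorphismthreshold}. So your one-step perturbation, with $R(\hat g_t)$ placed in the error term, is a streamlined version of the same computation.

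For the near/far split, the paper fixes $r_0$ first and uses $|x|^k t^{-k(1-\eta/2)/2}\le c|x|^{k\eta/2}$ on $|x|\le 2\sqrt t$. This gives far-region bounds $c\,r_0^{-\tau+k\eta/2}$ for $k\le 3$, and is where $\eta<\tfrac23\tau$ enters. Your $t$-dependent radius also works. However, you must check the exponents for the second- and third-derivative terms coming from $R(\hat g_t)$, not only for the first-order coefficient.

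\textbf{Positivity.} Here the routes genuinely differ. The paper asserts $\|v^t\|_{C^{2,\alpha}_{-\tau}}\le ct^{-(1-\eta)}$ and deduces $u^t\to 1$ uniformly. But in (\ref{eq:vtconvergencerate}) the source is measured in $C^{0,\alpha}_{-\tau}$ rather than $C^{0,\alpha}_{-\tau-2}$, the space on which $(L^{\hat g_t})^{-1}$ is bounded. With the correct weight one only gets the growing bound you found, so you were right not to rely on smallness of $v$.

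Your maximum-principle argument works, but the operative fact is not smallness in the weighted operator norm. What matters is that $R(\hat g_t)^-$ vanishes off the gluing annulus and satisfies $\sup_x |x|^2R(\hat g_t)^-\le c\,t^{(\eta-\tau)/2}$. Hardy's inequality $\int|\nabla\phi|^2\ge\tfrac{(n-2)^2}{4}\int|x|^{-2}\phi^2$, together with $\hat g_t$ being uniformly close to $\delta$, then makes the quadratic form of $L^{\hat g_t}$ positive on $C^\infty_c(\R^n)$. Test $L^{\hat g_t}u^t\ge 0$ against $\min(u^t,0)$, which has compact support because $u^t\to 1$ at infinity; this gives $u^t\ge 0$, and the strong maximum principle then gives $u^t>0$. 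Your abandoned continuity idea also works if you deform the potential, using $-\tfrac{4(n-1)}{n-2}\Delta^{\hat g_t}+\sigma R(\hat g_t)$ for $\sigma\in[0,1]$.

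The trade-off: the paper's intended route would also give quantitative convergence $u^t\to 1$, which it reuses in the proof of Theorem \ref{thm:masssequence}. Your route gives exactly positivity, which is all the lemma needs. Uniform convergence can still be recovered by applying the maximum principle to $v^t$ on $B(0,\sqrt t/2)$, where its equation has nonnegative potential $R(g_{t^{1-\eta/2}})$. On $\partial B(0,\sqrt t/2)$ the factor $t^{-\tau/2}$ beats the growth of $\|v^t\|_{C^{2,\alpha}_{-\tau}}$, since $\eta<\tfrac23\tau$.

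\textbf{Scalar curvature.} Your correction $R(g_t')=(u^t)^{-\frac{n+2}{n-2}}\chi_tR(g_{t^{1-\eta/2}})$ is right. The lemma's ``equal to'' holds only up to this positive factor, which does not affect nonnegativity.
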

We remark that this choice of conformal factor is used by Dan Lee in \cite{Lee19-errata}
\begin{proof}
Fix some $t>0$, which we will later select to be large, and some $s\in (2-n, 0)$, which we will later set to $-\tau$. We first show that the operator $\tilde L := -\frac{4(n-1)}{n-2}\Delta^{\hat g_t} + \chi_tR(g_{t^{1-\eta/2}})$ is an isomorphism $C^{2,\alpha}_{s}\to C^{0,\alpha}_{s-2}$ and then we show that $L^{\hat g_t}$ is sufficiently close to $\tilde L$ so that it too is an isomorphism. To do this we apply Theorem \ref{thm:LeeFredholm} to $\tilde L$: First note that $\hat g_t$ is complete and classically asymptotically flat, since it is in fact identically Euclidean outside of a compact set. Then, since $\chi_t$ and hence $\chi_t R(g_{t^{1-\eta/2}})$ are identically $0$ outside of a compact set, we have $\chi_t R(g_{t^{1-\eta}/2}) \in C^{0,\alpha}_{s-2}$ for any $\alpha\in (0,1)$.

In particular, we may apply Theorem \ref{thm:LeeFredholm} to $\frac{n-2}{4(n-1)}\tilde L = -\Delta^{\hat g_t} + \frac{n-2}{4(n-1)}\chi_t R(g_{t^{1-\eta/2}})$ to show that it is Fredholm and has the same index as that of the Euclidean Laplacian $\Delta^{\delta}:C_s^{2,\alpha}\to C_{s-2}^{0,\alpha}$. Since $s\in (2-n, 0)$, by \cite[Theorem A.35]{Lee19}, $\Delta^\delta$ is an isomorphism, and hence has Fredholm index equal to $0$, so $\tilde L$ has Fredholm index equal to 0 as well. Therefore $\tilde L$ is injective if and only if it is surjective. We now show that $\tilde L$ is injective and hence is an isomorphism:  since $R(g_t)\geq 0$ and hence $\chi_tR(g_{t^{1-\eta/2}})\geq 0$, and since $\R^n$ is connected, we may apply the Maximum Principle \cite[Theorem A.2]{Lee19} to $\tfrac{n-2}{4(n-1)} \tilde L= -\Delta^{\hat g_t} + \tfrac{n-2}{4(n-1)}\chi_t R(g_{t^{1-\eta/2}})$ to find that it is injective, and hence, since it has Fredholm index equal to 0, it is also surjective, and is an isomorphism.

It remains to show that $L^{\hat g_t}$ is sufficiently close to $\tilde L$ to also be an isomorphism. Of course, since being an isomorphism is an open condition, there exists some $\gamma >0$ such that if $||L^{\hat g_t} - \tilde L||_{\op} < \gamma$ then $L^{\hat g_t}$ is an isomorphism. The problem is that $\tilde L$ depends on $t$, so a priori $\gamma$ might depend on $t$ as well. We show that by choosing $t$ large enough depending on $n, \tau, \eta, s, \ell$, and $||\rho^\tau(g - \delta)||_{L^\infty(\R^n)}$, one can ensure that $\gamma$ is independent of $t$. To do this, note that by Lemma \ref{lemma:isomorphismthreshold} it is sufficient to show that, by $t$ large enough, we have $||L^{\hat g_t} - \tilde L||_{\op} < \mod(\tilde L)$. We first show that, for large $t$, $-c(n)\tilde L = -\frac{n-2}{4(n-1)}\tilde L$ is sufficiently close to $\Delta^{\delta}$ so that $\mod(\tilde L)$ is bounded below by some number independent of $t$.  To do this, fix some $u\in C^{2,\alpha}_{s}(\R^n)$. Then, using Remark \ref{rmk:gluedderivests} and the definition of $\chi_t$, we have the following for any $1 < r_0 < 2\sqrt{t}$, where $r_0$ is chosen sufficiently large so that $\ell(x) = |x|$ for all $|x| \geq r_0/2$:
\begin{align*}
&\sup_{x\in \R^n}|\ell(x)^{-s + 2}(-c(n)\tilde L  u - \Delta^{\delta} u)|
\\& \leq c\sup_{x\in B(0, 2\sqrt{t})}\ell(x)^{-s + 2} \left(|\hat g_t - \delta||\nabla^2 u(x)| + |\nabla(\hat g_t - \delta)||\nabla u(x)| + |\nabla^2(\hat g_t - \delta)||u(x)|\right)
%
\\& \leq \max\bigg\{c\sup_{x\in B(0, r_0)}\ell(x)^{-s + 2} \left(|\hat g_t - \delta||\nabla^2 u(x)| + |\nabla(\hat g_t - \delta)||\nabla u(x)| + |\nabla^2(\hat g_t - \delta)||u(x)|\right),
\\& \qquad \qquad \qquad \qquad c\sup_{x\in A(0, r_0, 2\sqrt{t})}\ell(x)^{-s + 2} \left(|\hat g_t - \delta||\nabla^2 u(x)| + |\nabla(\hat g_t - \delta)||\nabla u(x)| + |\nabla^2(\hat g_t - \delta)||u(x)|\right) \bigg\}
\\& \leq \max\bigg\{c\sup_{x\in B(0, r_0)}||u||_{C^{2,\alpha}_s(\R^n)}(|\hat g_t - \delta| + \ell(x)|\nabla(\hat g_t - \delta)| + \ell(x)^2|\nabla^2(\hat g_t - \delta)|),
\\& \qquad \qquad \qquad \qquad c\sup_{x\in A(0, r_0, 2\sqrt{t})}||u||_{C^{2,\alpha}_s(\R^n)}(|\hat g_t - \delta|(x) + \ell(x)|\nabla(\hat g_t - \delta)|(x) + \ell^2(x)|\nabla^2(\hat g_t - \delta)|(x))\bigg\}
\\& \leq \max\bigg\{c\sup_{x\in B(0, r_0)}||u||_{C^{2,\alpha}_s(\R^n)}(|| g_0 - \delta||_{L^\infty(\R^n)} + r_0\frac{|| g_0 - \delta||_{L^\infty(\R^n)}}{\sqrt{t^{1-\eta/2}}} +  r_0^2\frac{|| g_0 - \delta||_{L^\infty(\R^n)}}{t^{1-\eta/2}}),
\\& \qquad \qquad \qquad \qquad c\sup_{x\in A(0, r_0, 2\sqrt{t})}||u||_{C^{2,\alpha}_s(\R^n)}(|x|^{-\tau} + |x|\frac{|x|^{-\tau}}{\sqrt{t^{1-\eta/2}}} + |x|^2\frac{|x|^{-\tau}}{t^{1-\eta/2}})\bigg\}
\\& \leq c\max\bigg\{(|| g_0 - \delta||_{L^\infty(\R^n)} + r_0 \frac{|| g_0 - \delta||_{L^\infty(\R^n)}}{\sqrt{t^{1-\eta/2}}} + r_0^2\frac{|| g_0 - \delta||_{L^\infty(\R^n)}}{t^{1-\eta/2}}),
\\& \qquad \qquad \qquad \qquad c\sup_{x\in A(0, r_0, 2\sqrt{t})} |x|^{-\tau} + |x|^{1 -\eta/2}\frac{|x|^{-\tau + \eta/2}}{\sqrt{t^{1-\eta/2}}} + |x|^{2-\eta}\frac{|x|^{-\tau + \eta}}{t^{1-\eta/2}})\bigg\}||u||_{C^{2,\alpha}_s(\R^n)}
\\& \leq c\max\bigg\{(|| g_0 - \delta||_{L^\infty(\R^n)}+  r_0\frac{|| g_0 - \delta||_{L^\infty(\R^n)}}{\sqrt{t^{1-\eta/2}}} +  r_0^2\frac{|| g_0 - \delta||_{L^\infty(\R^n)}}{t^{1-\eta/2}}),
\\& \qquad \qquad \qquad \qquad c\sup_{x\in A(0, 1, 2\sqrt{t})} |x|^{-\tau} + |x|^{1 -\eta/2}\frac{|x|^{-\tau + \eta/2}}{\sqrt{t^{1-\eta/2}}} + |x|^{2-\eta}\frac{|x|^{-\tau + \eta}}{t^{1-\eta/2}})\bigg\}||u||_{C^{2,\alpha}_s(\R^n)}
\\& \leq c\max\bigg\{(|| g_0 - \delta||_{L^\infty(\R^n)} + \frac{|| g_0 - \delta||_{L^\infty(\R^n)}}{\sqrt{t^{1-\eta/2}}} + r_0^2 \frac{|| g_0 - \delta||_{L^\infty(\R^n)}}{t^{1-\eta/2}}), c r_0^{-\tau} + r_0^{-\tau + \eta/2} + r_0^{-\tau + \eta}\bigg\}||u||_{C^{2,\alpha}_s(\R^n)}
\\& \leq 2\bar \varepsilon ||u||_{C^{2,\alpha}_s(\R^n)}
\end{align*}
for $r_0$ sufficiently large depending on $\bar \varepsilon$, $\tau$, and $\eta$ and for $t$ sufficiently large depending on $\bar \varepsilon$, $r_0$, and $\eta$, where we are using the fact that $\eta < 2\tau/3$ (first choose $r_0$ large to bound the second term, then choose $t$ large depending on $r_0$ to bound the first).

We also have
\begin{align*}
& \sup_{x\in \R^n} \ell(x)^{\alpha -s + 2}\sup_{y,z\in B(x, |x|/2), y\neq z}\frac{|-c(n)\tilde L  u(y) - \Delta^{\delta} u(y) - (-c(n) \tilde L u(z) - \Delta^{\delta} u(z))|}{d_{\delta}(y,z)^{\alpha}}
\\& \leq  \sup_{x\in B(0, 4\sqrt{t})}\ell(x)^{\alpha -s + 2}\sup_{y,z\in B(x, |x|/2), y\neq z}\frac{| (\hat g_t - \delta)(y)* \nabla^2 u(y) - (\hat g_t - \delta)(z)* \nabla^2 u(z)|}{d_{\delta}(y,z)^{\alpha}}
\\& + \sup_{x\in B(0, 4\sqrt{t})}\ell(x)^{\alpha -s + 2}\sup_{y,z\in B(x, |x|/2), y\neq z}\frac{| \nabla(\hat g_t - \delta)(y)* \nabla u(y) - \nabla(\hat g_t - \delta)(z)* \nabla u(z)|}{d_{\delta}(y,z)^{\alpha}}
\\& + \sup_{x\in B(0, 4\sqrt{t})}\ell(x)^{\alpha -s + 2}\sup_{y,z\in B(x, |x|/2), y\neq z}\frac{| \chi_t R(g_{t^{1-\eta}})(y) u(y) -   \chi_t R(g_{t^{1-\eta}})(z) u(z)|}{d_{\delta}(y,z)^{\alpha}}
\\& \leq \sup_{x\in B(0, 4\sqrt{t})}\ell(x)^{\alpha -s + 2}\sup_{y,z\in B(x, |x|/2), y\neq z}\bigg[\frac{| (\hat g_t - \delta)(y)* \nabla^2 u(y) - (\hat g_t - \delta)(y)* \nabla^2 u(z)|}{d_{\delta}(y,z)^{\alpha}}
\\& \qquad \qquad  \qquad \qquad + \frac{| (\hat g_t - \delta)(y)* \nabla^2 u(z) - (\hat g_t - \delta)(z)* \nabla^2 u(z)|}{d_{\delta}(y,z)^{\alpha}}\bigg]
\\& + \sup_{x\in B(0, 4\sqrt{t})}\ell(x)^{\alpha -s + 2}\sup_{y,z\in B(x, |x|/2), y\neq z}\bigg[\frac{| \nabla(\hat g_t - \delta)(y)* \nabla u(y) - \nabla(\hat g_t - \delta)(y)* \nabla u(z)|}{d_{\delta}(y,z)^{\alpha}}
\\& \qquad \qquad \qquad \qquad + \frac{| \nabla(\hat g_t - \delta)(y)* \nabla u(z) - \nabla(\hat g_t - \delta)(z)* \nabla u(z)|}{d_{\delta}(y,z)^{\alpha}} \bigg]
\\& + \sup_{x\in B(0, 4\sqrt{t})}\ell(x)^{\alpha -s + 2}\sup_{y,z\in B(x, |x|/2), y\neq z}\bigg[\frac{| \chi_t R(g_{t^{1-\eta}})(y) u(y) -   \chi_t R(g_{t^{1-\eta}})(y) u(z)|}{d_{\delta}(y,z)^{\alpha}}
\\& \qquad \qquad \qquad \qquad + \frac{\chi_t R(g_{t^{1-\eta}})(y) u(z) -   \chi_t R( g_{t^{1-\eta}})(z) u(z)}{d_{\delta}(y,z)^{\alpha}}\bigg]
=: I + II + III.
\end{align*}
\begin{align*}
I & \leq c||\hat g_t - \delta||_{C^0(\R^n)} ||u||_{C^{2,\alpha}_{s}(\R^n)} + \sup_{x\in B(0, 4\sqrt{t})}\ell(x)^{\alpha}\sup_{y,z\in B(x, |x|/2), y\neq z} ||u||_{C^{2,\alpha}_{s}(\R^n)}\frac{| (\hat g_t - \delta)(y)- (\hat g_t - \delta)(z)|}{d_{\delta}(y,z)^{\alpha}}
\\& \leq c||\hat g_t - \delta||_{C^0(\R^n)} ||u||_{C^{2,\alpha}_{s}(\R^n)} + \sup_{x\in B(0, 4\sqrt{t})} ||\nabla (\hat g_t - \delta) ||_{C^0(B(x, |x|/2))} |x|||u||_{C^{2,\alpha}_{s}(\R^n)}
\\& \leq c||\hat g_t - \delta||_{C^0(\R^n)} ||u||_{C^{2,\alpha}_{s}(\R^n)} 
\\& \qquad \qquad + \max\bigg\{\sup_{x\in B(0, r_0)} ||\nabla (\hat g_t - \delta) ||_{C^0(B(x, |x|/2))} |x|, \sup_{x\in A(0, r_0, 4\sqrt{t})} ||\nabla (\hat g_t - \delta) ||_{C^0(B(x, |x|/2))} |x|  \bigg\} ||u||_{C^{2,\alpha}_{s}(\R^n)}
\\& \leq c|| g_0 - \delta||_{L^\infty(\R^n)} ||u||_{C^{2,\alpha}_{s}(\R^n)} + \max\bigg\{ \frac{cr_0|| g_0 - \delta||_{L^\infty(\R^n)}}{\sqrt{t^{1-\eta/2}}}, \sup_{x\in A(0, r_0, 4\sqrt{t})}\frac{c|x|^{1 - \eta/2}|x|^{-\tau + \eta/2}}{\sqrt{t^{1-\eta/2}}} \bigg\} ||u||_{C^{2,\alpha}_{s}(\R^n)}
\\& \leq c|| g_0 - \delta||_{L^\infty(\R^n)} ||u||_{C^{2,\alpha}_{s}(\R^n)} + \max\bigg\{ \frac{cr_0|| g_0 - \delta||_{L^\infty(\R^n)}}{\sqrt{t^{1-\eta/2}}}, cr_0^{-\tau + \eta/2} \bigg\} ||u||_{C^{2,\alpha}_{s}(\R^n)}
\\& \leq 2c\bar \varepsilon||u||_{C^{2,\alpha}_{s}(\R^n)} 
\end{align*}
for $r_0$ sufficiently large depending on $\bar \varepsilon$, $\tau$, and $\eta$ and $t$ sufficiently large depending on $\bar \varepsilon, r_0$, and $\eta$.
Arguing similarly, we have
\begin{align*}
II & \leq \sup_{x\in B(0, 4\sqrt{t})}\ell(x)^{\alpha -s + 2}\sup_{y,z\in B(x, |x|/2), y\neq z} ||\nabla (\hat g_t - \delta)||_{C^0(B(x, |x|/2))}||\nabla^2 u||_{C^0(B(x, |x|/2))}d_{\delta}^{1-\alpha}(y,z)
\\& \qquad \qquad \qquad + \sup_{x\in B(0, 4\sqrt{t})}\ell(x)^{\alpha + 1}\sup_{y,z\in B(x, |x|/2), y\neq z}\frac{|\nabla(\hat g_t - \delta)(y) - \nabla(\hat g_t - \delta)(z)|}{d_\delta^\alpha(y,z)}||u||_{C^{2,\alpha}_s(\R^n)}
\\& \leq \sup_{x\in B(0, 4\sqrt{t})}|| \nabla(\hat g_t -\delta)||_{C^0(B(x, |x|/2))} \sup_{y,z\in B(x, |x|/2), y\neq z} d_\delta(y,z) ||u||_{C^{2,\alpha}_s(\R^n)}
\\& \qquad \qquad \qquad + \sup_{x\in B(0, 4\sqrt{t})}\ell(x)^{\alpha + 1}\sup_{y,z\in B(x, |x|/2), y\neq z} || \nabla^2(\hat g_t - \delta) ||_{C^0(B(x, |x|/2))}d^{1-\alpha}_{\delta}(y,z)||u||_{C^{2,\alpha}_s(\R^n)}
\\& \leq \max\bigg\{ \sup_{x\in B(0, r_0)} \frac{c r_0|| g_0 - \delta||_{L^\infty(\R^n)}}{\sqrt{t^{1-\eta/2}}}, \sup_{x\in A(0, r_0, 4\sqrt{t})}\frac{|x|^{1-\eta/2}|x|^{-\tau + \eta/2}}{\sqrt{t^{1-\eta/2}}} \bigg\}||u||_{C^{2,\alpha}_s(\R^n)}
\\& \qquad \qquad \qquad + \max\bigg\{  \sup_{x\in B(0, r_0)} \frac{cr_0^2|| g_0 - \delta||_{L^\infty(\R^n)}}{t^{1-\eta/2}}, \sup_{x\in A(0, r_0, 4\sqrt{t})} \frac{|x|^{2-\eta}|x|^{-\tau + \eta}}{t^{1-\eta/2}} \bigg\}||u||_{C^{2,\alpha}_s(\R^n)}
\\& \leq \max\bigg\{ \sup_{x\in B(0, r_0)} \frac{c r_0|| g_0 - \delta||_{L^\infty(\R^n)}}{\sqrt{t^{1-\eta/2}}}, cr_0^{-\tau + \eta/2} \bigg\}||u||_{C^{2,\alpha}_s(\R^n)}
 \\& \qquad \qquad  + \max\bigg\{  \sup_{x\in B(0, r_0)} \frac{cr_0^2|| g_0 - \delta||_{L^\infty(\R^n)}}{t^{1-\eta/2}}, cr_0^{-\tau + \eta} \bigg\}||u||_{C^{2,\alpha}_s(\R^n)}
\\& \leq 2c\bar \varepsilon||u||_{C^{2,\alpha}_{s}(\R^n)} 
\end{align*}
(for $r_0$ sufficiently large depending on $\bar \varepsilon, \tau,$ and $\eta$, and for $t$ sufficiently large depending on $\bar \varepsilon$, $r_0$, and $\eta$)
and
\begin{align*}
III & \leq \sup_{x\in B(0, 4\sqrt{t})}\ell(x)^{\alpha -s + 2}\sup_{y,z\in B(x, |x|/2), y\neq z}\bigg[ ||\chi_t R(g_{t^{1-\eta}})||_{C^0(B(x, |x|/2))} || \nabla u||_{C^0(B(x, |x|/2))}d_{\delta}^{1-\alpha}(y,z)
\\& \qquad \qquad \qquad \qquad + || \nabla(\chi_t R(g_{t^{1-\eta/2}}))||_{C^0(B(x, |x|/2))}d_\delta^{1-\alpha}(y,z)||u||_{C^0(B(x, |x|/2))}\bigg]
\\& \leq \sup_{x\in B(0, 4\sqrt{t})}\bigg[ \ell(x)^{\alpha + 1} ||\chi_t R(g_{t^{1-\eta/2}})||_{C^0(B(x, |x|/2))}|x|^{1-\alpha}
\\& \qquad \qquad \qquad \qquad + \ell(x)^{\alpha+ 2} || \nabla(\chi_t R(g_{t^{1-\eta/2}}))||_{C^0(B(x, |x|/2))}|x|^{1-\alpha}\bigg]||u||_{C^{2,\alpha}_s(\R^n)}
\\& \leq \max\bigg\{ \sup_{x\in B(0, r_0)}\ell(x)^{\alpha + 1} ||\chi_t R(g_{t^{1-\eta/2}})||_{C^0(B(x, |x|/2))}|x|^{1-\alpha},
\\& \qquad \qquad  \sup_{x\in A(0, r_0, 4\sqrt{t})}\ell(x)^{\alpha + 1} ||\chi_t R(g_{t^{1-\eta/2}})||_{C^0(B(x, |x|/2))}|x|^{1-\alpha}\bigg\}||u||_{C^{2,\alpha}_s(\R^n)}
\\& \qquad \qquad \qquad \qquad + \max\bigg\{\sup_{x\in B(0, r_0)}\ell(x)^{\alpha+ 2} || \nabla(\chi_t R(g_{t^{1-\eta/2}}))||_{C^0(B(x, |x|/2))}|x|^{1-\alpha},
\\& \qquad \qquad \qquad \qquad \qquad  \sup_{x\in A(0, r_0, 4\sqrt{t})}\ell(x)^{\alpha+ 2} || \nabla(\chi_t R(g_{t^{1-\eta/2}}))||_{C^0(B(x, |x|/2))}|x|^{1-\alpha}\bigg\}||u||_{C^{2,\alpha}_s(\R^n)}
\\& \leq \max\bigg\{c r_0^2 \frac{|| g_0 - \delta||_{L^\infty(\R^n)}}{t^{1-\eta/2}} , \sup_{x\in A(0, r_0, 4\sqrt{t})} \frac{c|x|^{2 - \eta}|x|^{-\tau + \eta}}{t^{1-\eta/2}}\bigg\}||u||_{C^{2,\alpha}_s(\R^n)} 
\\& \qquad \qquad \qquad \qquad + \max\bigg\{ r_0^3\frac{|| g_0 - \delta||_{L^\infty(\R^n)}}{\sqrt{t^{1-\eta/2}}^3}, \sup_{x\in A(0, r_0, 4\sqrt{t})}\frac{|x|^{3 - 3\eta/2}|x|^{-\tau + 3\eta/2}}{\sqrt{t^{1-\eta/2}}^3}\bigg\}||u||_{C^{2,\alpha}_s(\R^n)}
\\& \leq \max\bigg\{c r_0^2 \frac{|| g_0 - \delta||_{L^\infty(\R^n)}}{t^{1-\eta/2}} , cr_0^{-\tau + \eta}\bigg\}||u||_{C^{2,\alpha}_s(\R^n)}  + \max\bigg\{ r_0^3\frac{|| g_0 - \delta||_{L^\infty(\R^n)}}{\sqrt{t^{1-\eta/2}}^3}, cr_0^{-\tau + 3\eta/2}\bigg\}||u||_{C^{2,\alpha}_s(\R^n)}
\\& \leq 2c\varepsilon||u||_{C^{2,\alpha}_{s}(\R^n)} 
\end{align*}
for $r_0$ sufficiently large depending on $\bar \varepsilon, \tau,$ and $\eta$, and for $t$ sufficiently large depending on $\bar \varepsilon$, $r_0$, and $\eta$. 

Combining the previous four calculations, we have
\begin{equation*}\label{eq:tildeLclosetoDelta}
\begin{split}
|| -c(n)\tilde L  u - \Delta^{\delta} u||_{C^{0,\alpha}_{s-2}(\R)}  & \leq \sup_{x\in \R^n}|\ell(x)^{-s + 2}(-c(n)\tilde L  u - \Delta^{\delta} u)| + I + II + III
\\& \leq 8c\bar \varepsilon||u||_{C^{2,\alpha}_s(\R^n)}
\end{split}
\end{equation*}
for $t$ sufficiently large depending on $\bar \varepsilon, \tau,$ and $\eta$.

Now note that since $2-n < s < 0$, $\Delta^{\delta}: C^{2,\alpha}_s(\R^n)\to C^{0,\alpha}_{s-2}$ is an isomorphism \cite[Theorem A.35]{Lee19}, and hence $\mod(\Delta^{\delta}) >0$. Therefore, by (\ref{eq:tildeLclosetoDelta})
\begin{equation}\label{eq:minmoduniformlowerbound}
\begin{split}
\mod(\tilde L) &= \inf_{||u||_{C^{2,\alpha}_{s}} = 1}||\tilde L u||_{C^{0,\alpha}_{s-2}} 
\\& \geq \inf_{||u||_{C^{2,\alpha}_{s}} = 1}(||\Delta^{\delta} u||_{C^{0,\alpha}_{s-2}} - || \tilde L u - \Delta^{\delta} u ||_{C^{0,\alpha}_{s-2}} )
\\& \geq \mod(\Delta^{\delta}) - 8c\bar \varepsilon
\end{split}
\end{equation}
In particular, since $c$ does not depend on $t$,  this tells us that by choosing $\varepsilon$ sufficiently small depending on $\mod(\Delta^{\delta})$ (so depending on $n, s, \alpha$), we have that $\mod(\tilde L)$ is bounded below by some positive constant that is uniform as $t\to \infty$. 

We now show that for $t$ sufficiently large, we have $|| L^{\hat g_t} - \tilde L||_{\op} < \mod(\tilde L)$. 
\begin{claim}\label{claim:tildeLclosetoL}
For any $u\in C^{2,\alpha}_s(\R^n)$,
\begin{equation*}
 ||L^{\hat g_t}u - \tilde L u||_{C^{0,\alpha}_{s-2}(\R^n)} \leq c \sqrt{t}^{-\tau + 3\eta/2}||u||_{C^{2,\alpha}_s(\R^n)}.
\end{equation*}
Recall that, by choice of $\eta$, $\sqrt{t}^{-\tau + 3\eta/2}\xrightarrow[t\to\infty]{} 0$.
\end{claim}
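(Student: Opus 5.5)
Both operators share the principal part $-\frac{4(n-1)}{n-2}\Delta^{\hat g_t}$, so
\begin{equation*}
L^{\hat g_t}u - \tilde L u = \big(R(\hat g_t) - \chi_t R(g_{t^{1-\eta/2}})\big)u =: E_t\, u .
\end{equation*}
The plan is to bound $E_t$ pointwise, together with its first derivative, in weighted form, and then multiply. On $B(0,\sqrt t/2)$ we have $\chi_t\equiv 1$ and $\hat g_t = g_{t^{1-\eta/2}}$, so $E_t\equiv 0$. Outside $B(0,2\sqrt t)$ we have $\chi_t\equiv 0$ and $\hat g_t=\delta$, so again $E_t\equiv0$. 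Hence $E_t$ is supported in the annulus $A(0,\sqrt t/2, 2\sqrt t)$, where $|x|\sim\sqrt t$ and $\ell(x)=|x|$ (because $t\ge T$).

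On this annulus we bound $R(\hat g_t)$ and $R(g_{t^{1-\eta/2}})$ separately. Scalar curvature has the schematic form $R(g)=g^{-1}*g^{-1}*\nabla^2 g + g^{-1}*g^{-1}*g^{-1}*\nabla g*\nabla g$. Using Remark \ref{rmk:gluedderivests} (for $\hat g_t$) and Corollary \ref{cor:smoothdecay} (for $g_{t^{1-\eta/2}}$), with $|x|\sim\sqrt t$, we get
\begin{equation*}
|E_t|\le c\Big(\frac{|x|^{-\tau}}{t^{1-\eta/2}} + \frac{|x|^{-2\tau}}{t^{1-\eta/2}}\Big)\le c\,\sqrt t^{-\tau-2+\eta},
\end{equation*}
and, in the same way using the third derivative bounds,
\begin{equation*}
|\nabla E_t|\le c\,|x|^{-\tau} t^{-\frac32(1-\eta/2)}\le c\,\sqrt t^{-\tau-3+3\eta/2}.
\end{equation*}
The derivative bounds on $\chi_t$ ($|\nabla^k\chi_t|\le 10t^{-k/2}\le 10\,(t^{1-\eta/2})^{-k/2}$) are already incorporated in Remark \ref{rmk:gluedderivests}. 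The bilipschitz bound $\|g_t-\delta\|_{L^\infty}\le c_n\bar\varepsilon$ controls the inverse metrics.

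Next we estimate the weighted norm of $E_tu$. For the $C^0$ part,
\begin{equation*}
\sup_x \ell^{2-s}|E_t u|\le \sup_{A}\ell^2|E_t|\,\|u\|_{C^{2,\alpha}_s}\le c\,\sqrt t^{-\tau+\eta}\|u\|_{C^{2,\alpha}_s}.
\end{equation*}
For the Hölder seminorm, take $x$ with $B(x,|x|/2)$ meeting the support of $E_t$, so that $|x|\sim\sqrt t$; the remaining balls contribute nothing. Then split exactly as in the terms $III$ above:
\begin{equation*}
E_t(y)u(y)-E_t(z)u(z) = E_t(y)\big(u(y)-u(z)\big) + \big(E_t(y)-E_t(z)\big)u(z).
\end{equation*}
Apply the mean value theorem with $d_\delta(y,z)^{1-\alpha}\le |x|^{1-\alpha}$. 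The first term is bounded by $\ell^{\alpha+2-s}\,|E_t|\,|\nabla u|\,|x|^{1-\alpha}\le c\,\ell^2|E_t|\,\|u\|$. The second is bounded by $\ell^{\alpha+2-s}\,|\nabla E_t|\,|u|\,|x|^{1-\alpha}\le c\,\ell^3|\nabla E_t|\,\|u\|$. By the bounds above, these are at most $c\sqrt t^{-\tau+\eta}\|u\|$ and $c\sqrt t^{-\tau+3\eta/2}\|u\|$ respectively. Since $t\ge1$, both are dominated by $c\sqrt t^{-\tau+3\eta/2}\|u\|_{C^{2,\alpha}_s}$, which gives the claim.

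The main obstacle I expect is bookkeeping the $\nabla E_t$ term. It needs third-derivative decay of both the flow and the glued metric, with the correct power of $t^{1-\eta/2}$, and this is where the exponent $3\eta/2$ comes from. Care is also needed that the constant $c$ depends only on $\|\rho^\tau(g-\delta)\|$, $n$, $\tau$, $\eta$ and $\varphi$, and not on $t$. Corollary \ref{cor:smoothdecay} (valid for $m\le4$) and Remark \ref{rmk:gluedderivests} supply exactly this.
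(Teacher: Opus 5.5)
Your proposal is correct and follows essentially the same route as the paper. You write the difference as $\big(R(\hat g_t)-\chi_tR(g_{t^{1-\eta/2}})\big)u$, localize it to $A(0,\sqrt t/2,2\sqrt t)$, and split the H\"older quotient into $E_t(y)(u(y)-u(z))+(E_t(y)-E_t(z))u(z)$. As in the paper, the dominant $\sqrt t^{-\tau+3\eta/2}$ comes from the third-derivative bound in the $\nabla E_t$ term. Your $C^0$ term also uses the sharper $t^{1-\eta/2}$ where the paper writes $t^{1-\eta}$, which does not affect the conclusion.
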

\begin{proof}[Proof of Claim]
To see this, again fix $u\in C^{2,\alpha}_s(\R^n)$. Then, by definition of $\chi_t$ and $\hat g_t$,
\begin{align*}
& ||L^{\hat g_t}u - \tilde L u||_{C^{0,\alpha}_{s-2}(\R^n)} = \sup_{x\in \R^n}|\ell(x)^{-s + 2}( R(\hat g_{t})u - \chi_tR(g_{t^{1-\eta/2}})u)(x) | 
\\& + \sup_{x\in \R^n}\ell(x)^{\alpha -s + 2}\sup_{y,z\in B(x, |x|/2), y\neq z}\frac{|R(\hat g_{t})u(y) -  \chi_tR(g_{t^{1-\eta/2}})u(y) - R(\hat g_{t})u(z) +  \chi_tR(g_{t^{1-\eta/2}})u(z)|}{d_{\delta}(y,z)^\alpha}
\\&= \sup_{x\in A(0, .5\sqrt{t}, 2\sqrt{t})}|\ell(x)^{-s + 2}(R(\hat g_{t})u - \chi_tR(g_{t^{1-\eta/2}})u)(x) |
\\& + \sup_{x\in A(0, .5\sqrt{t}, 2\sqrt{t})}\ell(x)^{\alpha -s + 2}\sup_{y,z\in B(x, |x|/2), y\neq z}\frac{|(R(\hat g_{t}) -  \chi_tR(g_{t^{1-\eta/2}}))(y)(u(y) - u(z))|}{d_{\delta}(y,z)^\alpha}
\\& + \sup_{x\in A(0, .5\sqrt{t}, 2\sqrt{t})}\ell(x)^{\alpha -s + 2} \sup_{y,z\in B(x, |x|/2), y\neq z}\frac{|(R(\hat g_{t})(y) -  \chi_tR(g_{t^{1-\eta/2}}(y) - R(\hat g_{t})(z) +  \chi_tR(g_{t^{1-\eta/2}}(z))u(z)|}{d_{\delta}(y,z)^\alpha} 
\\& \leq c\sup_{x\in A(0, .5\sqrt{t}, 2\sqrt{t})} \ell(x)^{2}|R(\hat g_{t})u - \chi_tR(g_{t^{1-\eta/2}})| ||u||_{C^{2,\alpha}_s(\R^n)}
\\& + \sup_{x\in A(0, .5\sqrt{t}, 2\sqrt{t})}\ell(x)^{\alpha -s + 2}\sup_{y,z\in B(x, |x|/2), y\neq z}|(R(\hat g_{t}) -  \chi_tR(g_{t^{1-\eta/2}}))(y)|  ||\nabla u||_{C^{0}(B(x, |x|/2))}d^{1-\alpha}_\delta(y,z)
\\& + c\sup_{x\in A(0, .5\sqrt{t}, 2\sqrt{t})}\ell(x)^{\alpha -s + 2 } ||u||_{C^0(B(x, |x|/2))}(|| \nabla R(\hat g_t)||_{C^0(B(x, |x|/2))} 
\\& + ||\nabla(\chi_t R(g_{t^{1-\eta/2}}))||_{C^0(B(x, |x|/2))})d^{1-\alpha}_\delta(y,z)
\\& \leq c\sup_{x\in A(0, .5\sqrt{t}, 2\sqrt{t})} |x|^{2}\frac{|x|^{-\tau}}{t^{1-\eta}} ||u||_{C^{2,\alpha}_s(\R^n)} 
\\& \qquad \qquad + c\sup_{x\in A(0, .5\sqrt{t}, 2\sqrt{t})} |x|^{\alpha + 1}\frac{|x|^{-\tau}}{t^{1-\eta/2}}|x|^{1-\alpha} ||u||_{C^{2,\alpha}_s(\R^n)}
\\& \qquad \qquad \qquad \qquad +  c\sup_{x\in A(0, .5\sqrt{t}, 2\sqrt{t})}|x|^{\alpha + 2 }\frac{|x|^{-\tau}}{\sqrt{t^{1-\eta/2}}^3}|x|^{1-\alpha} ||u||_{C^{2,\alpha}_s(\R^n)}
\\&= c\sup_{x\in A(0, .5\sqrt{t}, 2\sqrt{t})} |x|^{2 - 2\eta}\frac{|x|^{-\tau+ \eta}}{t^{1-\eta/2}} ||u||_{C^{2,\alpha}_s(\R^n)} 
\\& \qquad \qquad +  c\sup_{x\in A(0, .5\sqrt{t}, 2\sqrt{t})}|x|^{3 - 3\eta/2}\frac{|x|^{-\tau + 3\eta/2}}{\sqrt{t^{1-\eta/2}}^3}||u||_{C^{2,\alpha}_s(\R^n)}
\\& \leq c \sqrt{t}^{-\tau + 3\eta/2}||u||_{C^{2,\alpha}_s(\R^n)},
\end{align*}
where are estimating as in the previous computations. 
\end{proof}
In particular, if $||u||_{C^{2,\alpha}_s(\R^n)} = 1$, Claim \ref{claim:tildeLclosetoL} implies that for all sufficiently large $t$ (large depending on $\tau, \eta, n, \alpha,$ and $s$) $|| L^{\hat g_t} - \tilde L||_{\op} < \mod(\tilde L)$, so that, by Lemma \ref{lemma:isomorphismthreshold}, $L^{\hat g_t}$ is an isomorphism. Now note that since $ \chi_t R(g_{t^{1-\eta/2}})$ and $R(\hat g_t)$ are both identically $0$ outside of a compact set, we have $\chi_t R(g_{t^{1-\eta/2}}) - R(\hat g_t)\in C^{0,\alpha}_{s-2}(\R^n)$. Therefore, since $L^{\hat g_t}$ is invertible, there exists some $v^t\in C^{2,\alpha}_s(\R^n)$ such that $L^{\hat g_t}v^t = \chi_t R(g_{t^{1-\eta/2}}) - R(\hat g_t)$. Moreover, $v^t$ is smooth by elliptic regularity. Let $u^t := v^t + 1$. Then $L^{\hat g_t} u^t = L^{\hat g_t}v^t + R(\hat g_t) = \chi_t R(g_{t^{1-\eta/2}})$.

It remains to show that $u^t$ is positive and decays to $1$ at infinity in such a way that the conformally changed metric $g_t'$ in the statement of Lemma \ref{lemma:conformaldeformation} is $C^2$-asymptotically flat, for large $t$. Arguing as above, by Remark \ref{rmk:gluedderivests}, Corollary \ref{cor:smoothdecay}, and the derivative estimates for $\chi_t$, if we set $s = -\tau$, then we have
\begin{align*}
& ||L^{\hat g_t} v^t||_{C^{0,\alpha}_s(\R^n)} = ||\chi_t R(g_{t^{1-\eta/2}}) - R(\hat g_t)||_{C^{0,\alpha}_s(\R^n)}
\\& =  \sup_{x\in A(0, .5\sqrt{t}, 2\sqrt{t})} \ell(x)^{-s}|(\chi_t R(g_{t^{1-\eta/2}}) - R(\hat g_t))(x)| 
\\& + \sup_{x\in A(0, .5\sqrt{t}, 2\sqrt{t})}\ell(x)^{\alpha - s}\sup_{y,z\in B(x, |x|/2), y\neq z} \frac{|(\chi_t R(g_{t^{1-\eta/2}}) - R(\hat g_t))(y) - (\chi_t R(g_{t^{1-\eta/2}}) - R(\hat g_t))(z)|}{d_{\R^n}(y,z)^{\alpha}}
\\& \leq c\sup_{x\in A(0, .5\sqrt{t}, 2\sqrt{t})} |x|^{-s}\frac{|x|^{-\tau}}{t^{1-\eta/2}} 
\\& \qquad \qquad + c\sup_{x\in A(0, .5\sqrt{t}, 2\sqrt{t})}|x|^{\alpha - s} (||\nabla(\chi_t R(g_{t^{1-\eta/2}}))||_{C^0(B(x, |x|/2))} + || \nabla R(\hat g_t) ||_{C^0(B(x, |x|/2))})|x|^{1 - \alpha}
\\& \leq c\sup_{x\in A(0, .5\sqrt{t}, 2\sqrt{t})} |x|^{-s}\frac{|x|^{-\tau}}{t^{1-\eta/2}} + c\sup_{x\in A(0, .5\sqrt{t}, 2\sqrt{t})}|x|^{\alpha - s} \frac{|x|^{-\tau}}{\sqrt{t^{1-\eta/2}}^3}|x|^{1 - \alpha}
\\& \leq c\sup_{x\in A(0, .5\sqrt{t}, 2\sqrt{t})} |x|^{-s}\frac{|x|^{-\tau}}{t^{1-\eta/2}} + c\sup_{x\in A(0, .5\sqrt{t}, 2\sqrt{t})}|x|^{- s} \frac{|x|^{-\tau + \eta/2}}{\sqrt{t^{1-\eta/2}}^3}|x|^{1 - \eta/2}
\\& \leq \frac{c}{t^{1-\eta/2}} + \frac{c}{t^{1-\eta}}.
\end{align*}
By (\ref{eq:minmoduniformlowerbound}) and Claim \ref{claim:tildeLclosetoL}, as $t\to \infty$ we have $\mod(L^{\hat g_t}) \geq \mod(\tilde L) - ||L^{\hat g_t} - \tilde L||_{\op} >0$ for some positive lower bound that does not depend on $t$, so
\begin{equation}\label{eq:vtconvergencerate}
\begin{split}
||v_t||_{C^{2,\alpha}_{-\tau}(\R^n)} & = ||(L^{\hat g_t})^{-1}(L^{\hat g_t} v^t)||_{C^{0,\alpha}} \leq ||(L^{\hat g_t})^{-1}||_{\op}||L^{\hat g_t} v^t||_{C^{0,\alpha}} 
\\&= \frac{1}{\mod(L^{\hat g_t})}||L^{\hat g_t} v^t||_{C^{0,\alpha}_{-\tau}(\R^n)} \leq c||L^{\hat g_t} v^t||_{C^{0,\alpha}_{-\tau}(\R^n)}
\\& \leq \frac{c}{t^{1-\eta}}\xrightarrow[t\to \infty]{} 0.
\end{split}
\end{equation}
Therefore,
\begin{equation}\label{eq:utconvto1}
u^t\xrightarrow[t\to \infty]{C^{2,\alpha}_{-\tau}} 1,
\end{equation}
so $u^t\to 1$ uniformly on $\R^n\setminus B(0,.5)$ as $t\to \infty$. Since $u^t$ is harmonic (with respect to the Euclidean Laplacian) on $B(0,1)$, by the maximum principle it also converges uniformly to $1$ on $\overline{B(0,.5)}$, so for sufficiently large $t$ (depending on $\eta, n$, and $|| \rho_0^\tau(g - \delta)||_{L^\infty(\R^n)}$, where $\rho_0$ is as in Lemma \ref{lemma:RDTFC0AF}) we have $u^t >0$ on $\R^n$. Also, since each $\hat g_t \equiv \delta$ outside of a compact set, the conformally changed metric $g_t' = (u^t)^{4/(n-2)}\hat g_t$ is classically asymptotically flat with decay rate $\tau$:
\begin{align*}
|(g_t')_{ij} - \delta_{ij}| + |x| |\nabla g_t'| + |x|^2|\nabla^2 g_t'| & \leq c|v^t (x)\delta_{ij}| + c |x||\nabla v^t (x)| + c|x|^2|\nabla^2 v^t(x)|
\\& = O(|x|^{-\tau}).
\end{align*}
\end{proof}

We now prove Theorem \ref{thm:masssequence}.
\begin{proof}[Proof of Theorem \ref{thm:masssequence}]
Take $T$ as in Lemma \ref{lemma:conformaldeformation}. For $r \geq \sqrt{T}$, let $g_{r^2}'$ be the metric given by the conclusion of Lemma \ref{lemma:conformaldeformation}. Fix some $r_0 > \sqrt{T}$, and note that by the assumptions on $T$, $\ell(x)$ agrees with $|x|$ outside of $B(0, r_0/2)$, where $\ell(s)$ is as in Theorem \ref{thm:LeeFredholm}. Let $r > 2r_0$ (eventually we take $r\to\infty$). By Lemma \ref{lemma:Bartnikmonotonicity} we have
\begin{equation}\label{eq:semilocalmasscomparison}
\begin{split}
M_{C^0}(g'_{r^2}, \varphi, r) - M_{C^0}(g_{r^{2-2\eta}}, \varphi, r) &= c(n)\int_{A(0, r_0, r )} R(g'_{r^2}) - R(g_{r^{2-\eta}}) + \nabla g'_{r^2} * \nabla g'_{r^2}  - \nabla g_{r^{2-\eta}}*\nabla g_{r^{2-\eta}} dx 
\\& + M_{C^0}(g'_{r^2}, \varphi, r_0) - M_{C^0}(g_{r^{2-\eta}}, \varphi, r_0).
\end{split}
\end{equation}
\begin{claim}
\begin{equation}
\int_{A(0, r_0, r )} R(g'_{r^2}) - R(g_{r^{2-\eta}}) \xrightarrow[r\to\infty]{} 0.
\end{equation}
\end{claim}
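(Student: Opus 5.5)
The plan is to identify $R(g'_{r^2})$ explicitly via Lemma \ref{lemma:conformaldeformation}, reduce the integrand to a signed multiple of $R(g_{r^{2-\eta}})$ supported in an annulus $A(0,r/2,r)$, and then invoke Lemma \ref{lemma:finitemass} to show that the scalar curvature mass there tends to zero.

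\textbf{Reduction to finite mass.} First I would dispose of the case $M_{C^0}(g)=+\infty$. In that case conclusion (3) of Theorem \ref{thm:masssequence} holds trivially, and by Theorem \ref{thm:fullmassexistence} the value $-\infty$ cannot occur. So throughout I assume $M_{C^0}(g)<\infty$; this is exactly the hypothesis needed for Lemma \ref{lemma:finitemass}.

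\textbf{Identifying the integrand.} Apply Lemma \ref{lemma:conformaldeformation} with $t=r^2$, so that $t^{1-\eta/2}=r^{2-\eta}$. It gives
\begin{equation*}
R(g'_{r^2})=\chi_{r^2}R(g_{r^{2-\eta}}),
\end{equation*}
where $\chi_{r^2}(y)=\chi(y/r)$ takes values in $[0,1]$ and is identically $1$ on $B(0,r/2)$. Hence on $A(0,r_0,r)$ we have
\begin{equation*}
R(g'_{r^2})-R(g_{r^{2-\eta}})=(\chi_{r^2}-1)R(g_{r^{2-\eta}}).
\end{equation*}
This expression vanishes on $B(0,r/2)$. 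By Lemma \ref{lemma:RFNNSC} we have $R(g_{r^{2-\eta}})\geq 0$, so the expression is nonpositive with absolute value at most $R(g_{r^{2-\eta}})$. Since $r>2r_0$, it follows that
\begin{equation*}
\left|\int_{A(0,r_0,r)}R(g'_{r^2})-R(g_{r^{2-\eta}})\,dx\right|\leq \int_{A(0,r/2,r)}R(g_{r^{2-\eta}})\,dx .
\end{equation*}

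\textbf{Applying Lemma \ref{lemma:finitemass}.} I would apply Lemma \ref{lemma:finitemass} with $\ell_1=1/2$ and $\ell_2=1$. Its hypotheses hold for the following reasons.
\begin{enumerate}
\item The smallness $\|g-\delta\|_{C^0}<\bar\varepsilon\leq\varepsilon''$ holds by the standing assumptions.
\item We have $\eta<\tfrac{2\tau-n+2}{2}<2\tau-n+2$.
\item Nonnegative scalar curvature in the sense of Ricci flow is assumed.
\item $M_{C^0}(g)<\infty$ by the reduction above.
\end{enumerate}
The lemma then gives $\int_{A(0,r/2,r)}R(g_{r^{2-\eta}})\to 0$, which proves the claim.

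\textbf{Main obstacle.} The only subtle point is that one cannot simply use the pointwise bound $|R(g_t)|\leq c|x|^{-\tau}t^{-1}$ from Corollary \ref{cor:smoothdecay}. Integrated over the annulus, it gives only $O(r^{n-2-\tau+\eta})$, which does not decay because $\tau<n-2$. The argument must therefore use both the sign of $R$ and the finiteness of the mass through Lemma \ref{lemma:finitemass}. This is why the case $M_{C^0}(g)=+\infty$ has to be separated out first.
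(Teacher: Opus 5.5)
Your proposal is correct and follows the paper's argument. You write $R(g'_{r^2})-R(g_{r^{2-\eta}})=(\chi_{r^2}-1)R(g_{r^{2-\eta}})$, use $R(g_{r^{2-\eta}})\geq 0$ to bound the integral by the scalar curvature over an outer annulus, and conclude with Lemma \ref{lemma:finitemass}; you use $A(0,r/2,r)$ where the paper uses the slightly larger $A(0,r/2,2r)$, and your explicit restriction to $M_{C^0}(g)<\infty$ just makes precise an assumption the paper relies on implicitly when it invokes that lemma.
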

\begin{proof}[Proof of claim.]
This is because of Lemma \ref{lemma:finitemass} and the fact that
\begin{align*}
\int_{A(0, r_0, r )} |R(g'_{r^2}) - R(g_{r^{2-\eta}})| &= \int_{A(0, r_0, r )} (1 - \chi_{r^2} )R(g_{r^{2-\eta}})
\\& \leq \int_{A(0, r/2, 2r)}R(g_{r^{2-\eta}}) \xrightarrow[r\to\infty]{} 0.
\end{align*}
\end{proof}
\begin{claim}
\begin{equation*}
\int_{A(0, r_0, r)} \nabla g'_{r^2} * \nabla g'_{r^2}  - \nabla g_{r^{2-\eta}}*\nabla g_{r^{2-\eta}} \xrightarrow[r\to\infty]{} 0.
\end{equation*}
\end{claim}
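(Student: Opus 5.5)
Since $r_0$ is fixed and $r\to\infty$, I would bound each integrand pointwise on the annulus $A(0,r_0,r)$ and integrate. The two gradient terms are handled separately by the triangle inequality:
\begin{equation*}
\int_{A(0,r_0,r)}|\nabla g'_{r^2}|^2\,dx \qquad\text{and}\qquad \int_{A(0,r_0,r)}|\nabla g_{r^{2-\eta}}|^2\,dx .
\end{equation*}
(Here I read the subscript $r^{2-\eta}$ as the flow time $(r^2)^{1-\eta/2}$ at which $\hat g_{r^2}$ is built.) I will show that each of these tends to $0$.

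For the flow term I use Corollary \ref{cor:smoothdecay} with $m=1$ at time $r^{2-\eta}$. It gives $|\nabla g_{r^{2-\eta}}|(x)\le c|x|^{-\tau}r^{-(2-\eta)/2}$. Hence
\begin{equation*}
\int_{A(0,r_0,r)}|\nabla g_{r^{2-\eta}}|^2 \le c\,r^{-(2-\eta)}\int_{r_0}^{r}s^{n-1-2\tau}\,ds \le c\,r^{-2+\eta}\,r^{\,n-2\tau}=c\,r^{n-2-2\tau+\eta}.
\end{equation*}
This uses $n-2\tau>0$, which holds since $\tau<n-2<n/2\cdot 2$. The bound tends to $0$ because $\eta<\tfrac{2\tau-n+2}{2}<2\tau-n+2$. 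If instead $n-2\tau\le 0$, the radial integral is at most logarithmic and the bound is even better.

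For the conformal term I write $g'_{r^2}=(u^{r^2})^{4/(n-2)}\hat g_{r^2}$ with $u^{r^2}=1+v^{r^2}$. Then
\begin{equation*}
|\nabla g'_{r^2}|\le c\big(|\nabla \hat g_{r^2}|+|\nabla v^{r^2}|\big),
\end{equation*}
using that $u^{r^2}$ is uniformly close to $1$ and $\hat g_{r^2}$ is uniformly close to $\delta$. The first piece is controlled by Remark \ref{rmk:gluedderivests}: $|\nabla\hat g_{r^2}|(x)\le c|x|^{-\tau}(r^{2})^{-(1-\eta/2)/2}$, which is the same bound as for the flow term up to constants, so it gives the same $r^{n-2-2\tau+\eta}$ estimate. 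For the second piece I use (\ref{eq:vtconvergencerate}) with $t=r^2$. It gives
\begin{equation*}
\|v^{r^2}\|_{C^{2,\alpha}_{-\tau}}\le c\,r^{-2(1-\eta)}, \qquad\text{so}\qquad |\nabla v^{r^2}|(x)\le c\,r^{-2+2\eta}|x|^{-\tau-1}.
\end{equation*}
Integrating over the annulus yields at most $c\,r^{-4+4\eta}\big(r^{n-2-2\tau}+\log r+r_0^{n-2-2\tau}\big)$, which tends to $0$ since $\eta<1$. Throughout I use that $\ell(x)=|x|$ for $|x|\ge r_0/2$, which is guaranteed by the choice of $T$.

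The step needing the most care is checking that the constants really are uniform in $r$. The constant in (\ref{eq:vtconvergencerate}) rests on the $t$-independent lower bound for $\mod(L^{\hat g_t})$ established in the proof of Lemma \ref{lemma:conformaldeformation}, and the derivative bounds on $\hat g_{r^2}$ must be used on the whole annulus, including the gluing region $A(0,r/2,2r)$ where $\chi$ is nonconstant. The cutoff derivative $|\nabla\chi_{r^2}|\le 10/r$ multiplies $|g_{r^{2-\eta}}-\delta|\le c|x|^{-\tau}$, and this is dominated by the $r^{-1+\eta/2}$ factor already present, so Remark \ref{rmk:gluedderivests} covers it. Summing the three contributions then proves the claim.
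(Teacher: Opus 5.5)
Your proposal is correct and is essentially the paper's argument: pointwise gradient bounds for $g'_{r^2}$ from (\ref{eq:vtconvergencerate}) and Remark \ref{rmk:gluedderivests}, and for $g_{r^{2-\eta}}$ from Corollary \ref{cor:smoothdecay}, are squared and integrated over $A(0,r_0,r)$, and the argument closes using $\eta<\tfrac{2\tau-n+2}{2}$. (Your factor $r^{-(1-\eta/2)}$ for $|\nabla\hat g_{r^2}|$ is the right one; the paper's displayed $r^{-(2-\eta)}$ is a slip that does not change its final bound.) One small error on your side: $\tau<n-2$ does not imply $n-2\tau>0$ once $n\ge 5$ (e.g.\ $n=6$, $\tau=7/2$), but your separate treatment of the case $n-2\tau\le 0$ already covers this, so nothing is lost.
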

\begin{proof}[Proof of claim.]
First observe that $\nabla u^{r^2} = \nabla v^{r^2}$ and (\ref{eq:vtconvergencerate}) implies that 
\begin{equation*}
|| v^{r^2}||_{C^{2,\alpha}_{-\tau}}(\R^n) \leq \frac{c}{r^{2-2\eta}}
\end{equation*}
so, for $|x| \geq r_0$,
\begin{equation*}
|\nabla u^{r^2}(x)|  \leq \frac{c\ell(x)^{-\tau - 1}}{r^{2-2\eta}} = \frac{c|x|^{-\tau - 1}}{r^{2-2\eta}}.
\end{equation*}
Combining this with Remark \ref{rmk:gluedderivests} we find
\begin{align*}
|\nabla g'_{r^2} | &= |\nabla [(u^t)^{4/(n-2)}\hat g_t]|
\\&= c(n)| \nabla u^{r^2} * (u^{r^2})^{4/(n-2) -1}*\hat g_{r^2} + (u^{r^2})^{4/(n-2)}\nabla \hat g_{r^2}|
\\& \leq \frac{c|x|^{-\tau - 1}}{r^{2 - 2\eta}} + \frac{c|x|^{-\tau}}{r^{2-\eta}}
\\& \leq \frac{c|x|^{-\tau}}{r^{2-2\eta}}.
\end{align*}

Therefore,
\begin{align*}
\int_{A(0, r_0, r)} | \nabla g'_{r^2} * \nabla g'_{r^2}  & - \nabla g_{r^{2-\eta}}*\nabla g_{r^{2-\eta}} | \leq \int_{A(0, r_0, r)} \frac{c|x|^{-2\tau}}{r^{2-2\eta}} 
\\& \leq c  \frac{\ell^{n - 1 -2\tau + 1}}{r^{2-2\eta}} \bigg|_{r_0}^{r} \xrightarrow[r\to \infty]{}0
\\& \leq c r^{n-2 - 2\tau + 2\eta} - c\frac{r_0^{n- 2\tau}}{r^{2-2\eta}} \xrightarrow[r\to \infty]{} 0,
\end{align*}
\end{proof}
by smallness of $\eta$.

\begin{claim}
\begin{equation*}
M_{C^0}(g'_{r^2}, \varphi, r_0) - M_{C^0}(g_{r^{2-\eta}}, \varphi, r_0) \xrightarrow[r\to \infty]{}0.
\end{equation*}
\end{claim}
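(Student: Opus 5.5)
The claim concerns the fixed radius $r_0$, with $r\to\infty$. The plan is to show that both $g'_{r^2}$ and $g_{r^{2-\eta}}$ converge to $\delta$ in $C^1$ on the fixed annulus $A(0,.9r_0,1.1r_0)$, and then to express $M_{C^0}(\cdot,\varphi,r_0)$ through its $C^1$ form. By Remark \ref{rmk:C0agreeswithC1average}, for $C^1$ metrics $M_{C^0}(h,\varphi,r_0)$ is a $\varphi$-weighted average over $u\in[.9r_0,1.1r_0]$ of the flux $\int_{\S(u)}(\partial_j h_{ij}-\partial_i h_{jj})\nu^i\,dS$. Hence
\begin{equation*}
|M_{C^0}(h,\varphi,r_0)|\le c(n,\varphi)\,r_0^{n-1}\sup_{A(0,.9r_0,1.1r_0)}|\nabla h|.
\end{equation*}
Alternatively, one can work directly from Definition \ref{def:C0mass}, which gives $|M_{C^0}(h,\varphi,r_0)|\le c(n,\varphi,r_0)\sup_{A(0,.9r_0,1.1r_0)}|h-\delta|$. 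I will use whichever is more convenient; the $C^0$ version is the simplest.

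For $g_{r^{2-\eta}}$, I apply Lemma \ref{lemma:RDTFC0AF}, or more crudely Corollary \ref{cor:smoothdecay} with $m=1$:
\begin{equation*}
|\nabla g_{r^{2-\eta}}|(x)\le c\,|x|^{-\tau}r^{-(2-\eta)/2}\to 0
\end{equation*}
uniformly on the fixed annulus. Using the $C^0$ formulation instead, I would apply Lemma \ref{lemma:RDTFC0AF} with $\rho_t(x)\ge\sqrt{kt}$:
\begin{equation*}
|g_{r^{2-\eta}}-\delta|(x)\le C\,\|\rho_0^\tau(g-\delta)\|_{L^\infty}\,(k r^{2-\eta})^{-\tau/2}\to0.
\end{equation*}
This is the cleanest route, and it does not depend on $x$.

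For $g'_{r^2}=(u^{r^2})^{4/(n-2)}\hat g_{r^2}$, note that for $r$ large the annulus $A(0,.9r_0,1.1r_0)$ lies inside $B(0,r/2)$. There $\chi_{r^2}\equiv1$, so $\hat g_{r^2}=g_{r^{2-\eta}}$, which converges to $\delta$ uniformly by the previous step. For the conformal factor, (\ref{eq:vtconvergencerate}) gives $\|v^{r^2}\|_{C^{2,\alpha}_{-\tau}}\le c\,r^{-(2-2\eta)}$, so $\sup_{A(0,.9r_0,1.1r_0)}|u^{r^2}-1|\le c\,r_0^{-\tau}r^{-(2-2\eta)}\to0$, using $\ell(x)=|x|$ there, as arranged by the choice of $T$. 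Combining these,
\begin{equation*}
|g'_{r^2}-\delta|\le |(u^{r^2})^{4/(n-2)}-1|\,|\hat g_{r^2}|+|\hat g_{r^2}-\delta|\to0
\end{equation*}
uniformly on the annulus. Linearity of $M_{C^0}(\cdot,\varphi,r_0)$ in $h-\delta$ then gives that both masses tend to $0$, and so does their difference.

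The main obstacle is the bookkeeping: the constants must be uniform in $r$ while $r_0$ is fixed, and the bound on $v^{r^2}$ from Lemma \ref{lemma:conformaldeformation} must hold in the weighted norm with constants independent of $t$. That independence is exactly what (\ref{eq:minmoduniformlowerbound}) and Claim \ref{claim:tildeLclosetoL} provide, so I expect no real difficulty beyond citing them.
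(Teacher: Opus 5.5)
Your argument is correct, but it is organized differently from the paper's. The paper compares the two metrics directly. Once $r>2.2r_0$ one has $\chi_{r^2}\equiv 1$ on $A(0,.9r_0,1.1r_0)$, so there $g'_{r^2}-g_{r^{2-\eta}}=\bigl((u^{r^2})^{4/(n-2)}-1\bigr)g_{r^{2-\eta}}$. The local mass $M_{C^0}(\cdot,\varphi,r_0)$ is linear in the perturbation, with coefficients depending only on $n$, $\varphi$, $r_0$ (Definition \ref{def:C0mass}), and $|g_{r^{2-\eta}}|$ is uniformly bounded by Lemma \ref{lemma:KL}. Hence the difference is at most $c(n,\varphi,r_0)\sup_{A(0,.9r_0,1.1r_0)}|(u^{r^2})^{4/(n-2)}-1|$, which tends to $0$.

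You instead show that each fixed-radius mass tends to $0$ separately. This forces you to prove in addition that $g_{r^{2-\eta}}\to\delta$ uniformly, which you do correctly via the $kt$ term in $\rho_t$ from Lemma \ref{lemma:RDTFC0AF}. Your route gives a stronger conclusion: both local masses at radius $r_0$ vanish in the limit. The extra ingredient is not needed for the claim, though, because the term $|\hat g_{r^2}-\delta|$ in your triangle inequality cancels once you subtract.

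Both arguments need only $\sup_{A(0,.9r_0,1.1r_0)}|u^{r^2}-1|\to 0$, which is all the paper quotes. So I would not lean on the explicit rate $c\,r^{-(2-2\eta)}$ from (\ref{eq:vtconvergencerate}). In deriving it, the forcing term $\chi_tR(g_{t^{1-\eta/2}})-R(\hat g_t)$, which is supported on $A(0,.5\sqrt t,2\sqrt t)$, is measured in $C^{0,\alpha}_{-\tau}$ rather than in the target space $C^{0,\alpha}_{-\tau-2}$ of $L^{\hat g_t}$. With the extra factor $\ell^2\sim t$, the same pointwise bounds give no decay. The qualitative convergence you need still holds. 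For example, run the uniform-isomorphism argument of Lemma \ref{lemma:conformaldeformation} at a weight $s=-\sigma$ with $0<\sigma<\tau-\tfrac32\eta$; this gives $\|u^t-1\|_{C^{2,\alpha}_{-\sigma}}\to 0$, which is ample on the fixed annulus.
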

\begin{proof}[Proof of claim.]
Since $\hat g_{r^2} \equiv g_{r^{2-\eta}}$ on $A(0, .9r_0, 1.1r_0)$ this follows from the definitions of $M_{C^0}(g'_{r^2}, \varphi, r_0)$ and $M_{C^0}(g_{r^{2-\eta}}, \varphi, r_0)$ and the fact that $u^{r^2}\xrightarrow[r\to \infty]{C^{0}_{\loc}(\R^n)} 1$ by Lemma \ref{lemma:conformaldeformation}.
\end{proof}
Combining the previous three claims with (\ref{eq:semilocalmasscomparison}) implies
\begin{equation}\label{eq:closenessatscaler}
M_{C^0}(g'_{r^2}, \varphi, r) - M_{C^0}(g_{r^{2-\eta}}, \varphi, r) \xrightarrow[r\to \infty]{} 0.
\end{equation}
\begin{claim}
\begin{equation}\label{eq:approximatingmodifiedmetric}
M_{C^0}(g'_{r^2}) - M_{C^0}(g'_{r^2}, \varphi, r) \leq \delta(r)\xrightarrow[r\to \infty]{} 0.
\end{equation}
\end{claim}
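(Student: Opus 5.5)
We need to show $M_{C^0}(g'_{r^2}) - M_{C^0}(g'_{r^2},\varphi,r)\le \delta(r)\to 0$. Since $g'_{r^2}$ is a smooth, $C^2$-asymptotically flat metric whose scalar curvature is compactly supported (it equals $\chi_{r^2}R(g_{r^{2-\eta}})$, supported in $B(0,2r)$), its ADM mass exists. By Theorem \ref{thm:fullmassexistence}(2) it coincides with $M_{C^0}(g'_{r^2})$, and by Remark \ref{rmk:C0agreeswithC1average} it is also the limit of $M_{C^0}(g'_{r^2},\varphi,R)$ as $R\to\infty$ for the fixed cutoff $\varphi$. So the plan is to write
\begin{equation*}
M_{C^0}(g'_{r^2}) - M_{C^0}(g'_{r^2},\varphi,r) = \lim_{R\to\infty}\big(M_{C^0}(g'_{r^2},\varphi,R) - M_{C^0}(g'_{r^2},\varphi,r)\big),
\end{equation*}
and apply Lemma \ref{lemma:Bartnikmonotonicity} with $r_1=r$, $r_2=R$ (valid since $g'_{r^2}$ is smooth on all of $\R^n$). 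This expresses the difference as
\begin{equation*}
c(n)\int_{A(0,\ell_1 r,\ell_2 R)} R(g'_{r^2}) + \nabla g'_{r^2}*\nabla g'_{r^2}\,dx,
\end{equation*}
with $\ell_1,\ell_2\in[.9,1.1]$.

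\textbf{Scalar curvature term.} This is bounded by $\int_{A(0,.9r,2r)}R(g_{r^{2-\eta}})\,dx$. We use that $R(g'_{r^2})=\chi_{r^2}R(g_{r^{2-\eta}})$, that $0\le\chi_{r^2}\le1$, and that $\chi_{r^2}$ vanishes outside $B(0,2r)$. If $M_{C^0}(g)=+\infty$ the statement of Theorem \ref{thm:masssequence} is vacuous, so we may assume $M_{C^0}(g)$ is finite. Lemma \ref{lemma:finitemass}, applied with $\ell_1=.9$ and $\ell_2=2$, then shows this term tends to $0$ as $r\to\infty$. Note that the curvature term is nonnegative, so for an upper bound only its size matters.

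\textbf{Gradient term.} This is handled as in the previous claim, with two differences: the integration region is now the unbounded region $|x|\ge .9r$, and we need a bound uniform in $R$. Outside $B(0,2r)$ we have $\hat g_{r^2}\equiv\delta$, so there $\nabla g'_{r^2}$ is controlled by $\nabla u^{r^2}=\nabla v^{r^2}$. By (\ref{eq:vtconvergencerate}) with $t=r^2$, $|\nabla v^{r^2}|\le c|x|^{-\tau-1}r^{-(2-2\eta)}$. Inside $A(0,.9r,2r)$, Remark \ref{rmk:gluedderivests} adds a contribution $c|x|^{-\tau}r^{-(2-\eta)}$.

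Splitting the integral over $A(0,.9r,2r)$ and $\{|x|\ge 2r\}$:
\begin{itemize}
\item The bounded annulus contributes at most $c\,r^{n}\,r^{-2\tau}r^{-2(2-2\eta)}\to0$; the last step is where the smallness of $\eta$ is used.
\item The exterior contributes at most $c\,r^{-2(2-2\eta)}\int_{2r}^\infty \ell^{-2\tau-2+n-1}\,d\ell = c\,r^{n-2-2\tau}r^{-4+4\eta}$. The integral converges because $2\tau+2>n$, which follows from $\tau>\tfrac{n-2}{2}$, and the result tends to $0$.
\end{itemize}
Both bounds are independent of $R$, so passing to the limit $R\to\infty$ is harmless.

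\textbf{Main obstacle.} The delicate step is the exterior gradient estimate. We need the full weighted bound $v^{r^2}\in C^{2,\alpha}_{-\tau}$ with a norm that is small uniformly in $r$, not merely qualitative decay. This is exactly what (\ref{eq:vtconvergencerate}) supplies through the uniform lower bound on $\mod(L^{\hat g_t})$. The remaining steps are bookkeeping. Taking $\delta(r)$ to be the sum of the two bounds gives (\ref{eq:approximatingmodifiedmetric}).
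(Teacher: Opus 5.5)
Your proposal follows the paper's proof. It uses Lemma \ref{lemma:Bartnikmonotonicity} pushed to infinity (you make the $R\to\infty$ limit explicit, which the paper leaves implicit), Lemma \ref{lemma:finitemass} for the nonnegative curvature term on $A(0,.9r,2r)$, and pointwise decay of $\nabla g'_{r^2}$ for the quadratic term. One exponent is off: on the annulus, Remark \ref{rmk:gluedderivests} with $k=1$, $t=r^2$ gives $|\nabla(\hat g_{r^2}-\delta)|\le c|x|^{-\tau}r^{-(1-\eta/2)}$, not $c|x|^{-\tau}r^{-(2-\eta)}$, so the annulus contributes $c\,r^{n-2-2\tau+\eta}$. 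This still tends to $0$ because $\eta<2\tau-n+2$, which is exactly where the smallness of $\eta$ enters; the paper records it as $|\nabla g'_{r^2}|\le c|x|^{-\tau-1+\eta/2}$ there.
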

\begin{proof}[Proof of claim]
By Lemma \ref{lemma:Bartnikmonotonicity} we have that for some $s\in [.9, 1.1]$,
\begin{align*}
M_{C^0}(g'_{r^2}) - M_{C^0}(g'_{r^2}, \varphi, r) &= c(n) \int_{\R^n\setminus B(0, sr)} R(g'_{r^2}) + \nabla g'_{r^2}* \nabla g'_{r^2} 
\\& = c(n)\int_{\R^n\setminus B(0, sr)} \chi_{r^2} R(g_{r^{2-2\eta}}) + \nabla g'_{r^2}* \nabla g'_{r^2} 
\\& \leq \int_{A(0, sr, 2r)} R(g_{r^{2-\eta}}) + \int_{\R^n\setminus B(0, sr)} c|x|^{-2\tau + \eta - 2}dx, 
\end{align*}
where in the last step we are using the construction of $g'_{r^2}$ and (\ref{eq:vtconvergencerate}): on $\R^n\setminus B(0, 2r)$, $g'_{r^2} \equiv (u^{r^2})^{4/(n-2)}\delta$ so
\begin{equation*}
|\nabla g'_{r^2}(x)| \leq c_n|\nabla u^{r^2}(x)| \leq ||(u^{r^2} - 1)||_{C^{2,\alpha}_{-\tau}(\R^n)}|x|^{-\tau} \leq c|x|^{-\tau - 1}
\end{equation*}
by (\ref{eq:vtconvergencerate}), and on $A(0, sr, 2r)$ we have $.9r \leq |x| \leq 2r$ so
\begin{equation*}
\begin{split}
|\nabla g'_{r^2}(x)| &\leq c|\nabla u^{r^2}(x)| + |\nabla \chi_{r^2}(x)||g_{r^{2-\eta}}(x) - \delta| + |\nabla g_{r^{2-\eta}}(x)|
\\& \leq c|x|^{-\tau - 1} + \frac{c|x|^{-\tau}}{r} + \frac{c|x|^{-\tau}}{r^{1-\eta/2}} \leq c|x|^{-\tau -1 + \eta/2}.
\end{split}
\end{equation*}

A computation now shows that, since $\tau > (n-2)/2$ and by smallness of $\eta$,
\begin{equation*}
\int_{\R^n\setminus B(0, sr)} c|x|^{-2\tau +\eta - 2}dx \xrightarrow[r\to\infty]{} 0
\end{equation*}
and, since $M_{C^0}(g)$ is finite, Lemma \ref{lemma:finitemass} implies that
\begin{equation*}
\int_{A(0, sr, 10r)} R(g_{r^{2-\eta}}) \xrightarrow[r\to \infty]{} 0.
\end{equation*}
Thus we have
\begin{align*}
M_{C^0}(g'_{r^2}) - M_{C^0}(g'_{r^2}, \varphi, r) & \leq \int_{A(0, r, 2r)} R(g_{r^{2-\eta}}) + \int_{\R^n\setminus B(0, r)} c|x|^{-2\tau + \eta - 2}dx  \xrightarrow[r\to \infty]{} 0.
\end{align*}
\end{proof}
We now complete the proof of Theorem \ref{thm:masssequence}. First note that by Lemma \ref{lemma:massdistortionestimate}
\begin{equation*}
|M_{C^0}(g_{r^{2-\eta}}, \varphi, r) - M_{C^0}(g, \varphi_{r^{-\eta}}(0), r) | \leq cr^{n-2-2\tau} \xrightarrow[r\to \infty]{} 0
\end{equation*}
so
\begin{equation}\label{eq:rmasstoC0mass}
M_{C^0}(g) = \lim_{r\to\infty}M_{C^0}(g, \varphi_{r^{-\eta}}(0), r) = \lim_{r\to \infty}M_{C^0}(g_{r^{2-\eta}}, \varphi, r).
\end{equation} 

We now combine (\ref{eq:closenessatscaler}), (\ref{eq:approximatingmodifiedmetric}), and (\ref{eq:rmasstoC0mass}) to find that 
\begin{align*}
M_{C^0}(g'_{r^2}) - M_{C^0}(g) & = M_{C^0}(g'_{r^2}) -  M_{C^0}(g'_{r^2}, \varphi, r)
\\& + M_{C^0}(g'_{r^2}, \varphi, r) - M_{C^0}(g_{r^{2-\eta}}, \varphi, r)
\\& +  M_{C^0}(g_{r^{2-\eta}}, \varphi, r) - M_{C^0}(g)
\\& \leq \delta(r) \xrightarrow[r\to\infty]{} 0
\end{align*}
and hence, applying Theorem \ref{thm:C2PMT} to each $g'_{r^2}$, and using the fact that $M_{C^0}(g'_{r^{2}}) = m_{ADM}(g'_{r^2})$, we have
\begin{equation*}
M_{C^0}(g) \geq M_{C^0}(g'_{r^2}) - \delta(r) \geq -\delta(r)\xrightarrow[r\to\infty]{} 0.
\end{equation*}
\end{proof}

\appendix
\section{$C^0$-asymptotic flatness is preserved along Ricci-DeTurck flow}\label{appendix:RDTFweightedXnorm}

The purpose of this section is to prove Lemma \ref{lemma:RDTFC0AF} and Lemma \ref{lemma:RDTFL2distortion}. We refer the reader to \cite{McFeronSzekelyhidi12} and \cite{ChuLeeWan26} for short-time decay estimates for the Ricci-DeTurck flow. Throughout this section we fix some $0 < T < 1$. Let $\psi$ be some $1$-Lipschitz function and let $\lambda = 1/\sqrt{T}$. 

\begin{definition}\label{def:weightednorms}
Given, $T$ and $\psi$, define the weighted norms $||\cdot||_{\tilde X_T}, ||\cdot||_{\tilde Y^0_T}$, and $||\cdot||_{\tilde Y^1_T}$ by
\begin{equation*}
\begin{split}
|| h||_{\tilde X_T} & := \sup_{0 < t < T}|| \exp(-\lambda\psi) h||_{L^\infty(\R^n)} 
\\& + \sup_{x \in \R^n}\sup_{0 < r < \sqrt{T}} \exp(-\lambda \psi(x))\left( r^{-n/2}|| \nabla h||_{L^2(B(x,r)\times (0,r^2))} + r^{\tfrac{2}{n+4}}||\nabla h||_{L^{n+4}(B(x,r)\times (r^2/2, r^2))} \right)\\
|| h||_{\tilde Y_T^0} &:= \sup_{x\in \R^n}\sup_{0 < r < \sqrt{T}} \exp(-\lambda \psi(x))\left( r^{-n}||h||_{L^1(B(x,r)\times (0,r^2))} + r^{\tfrac{4}{n+4}}||h||_{L^{\tfrac{n+4}{2}}(B(x,r)\times (r^2/2, r^2))} \right)\\
|| h||_{\tilde Y_T^1} &:= \sup_{x\in \R^n}\sup_{0 < r < \sqrt{T}}\exp(-\lambda \psi(x))\left( r^{-n/2}||  h||_{L^2(B(x,r)\times (0,r^2))} + r^{\tfrac{2}{n+4}}|| h||_{L^{n+4}(B(x,r)\times (r^2/2, r^2))} \right)\\
||h||_{\tilde Y_T} &= \inf\{ ||f_0||_{\tilde Y^0_T} + ||f_1||_{\tilde Y^1_T} : f = f_0 + \nabla^* f_1\}
\end{split}
\end{equation*}
\end{definition}

\begin{theorem}\label{thm:weighteddifference}
There exists $\varepsilon = \varepsilon(n) > 0$, less than or equal to the threshold $\varepsilon$ in Lemma \ref{lemma:KL}, such that the following is true: Suppose $g$ is a continuous Riemannian metric on $\R^n$ such that $|| g- \delta||_{L^\infty(\R^n)} < \varepsilon$, and suppose that $g_t$ is a solution to the Ricci-DeTurck flow starting from $g$ in the sense of Lemma \ref{lemma:KL}. Let $\chi: \R^n\to [0,1]$ be a smooth cutoff function identically equal to 1 on $B(0,4)$ and identically equal to 0 outside of $B(0,5)$. Let $g_t'$ be Ricci-DeTurck flow starting from $g':= \chi g + (1-\chi)\delta$ in the sense of  Lemma \ref{lemma:KL}. Then
\begin{equation*}
|| g_t - g_t'||_{\tilde X_T} \leq c(n)|| \exp(-\lambda \psi) (g - g')||_{L^\infty(\R^n)}.
\end{equation*}
\end{theorem}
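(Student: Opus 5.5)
The plan is to run the Koch--Lamm fixed point/difference argument for the equation satisfied by $w := g_t - g_t'$, but in the exponentially weighted spaces $\tilde X_T,\tilde Y_T$ of Definition \ref{def:weightednorms}. Write $h = g_t - \delta$ and $h' = g_t' - \delta$. By Lemma \ref{lemma:KL}, both $\|h\|_X$ and $\|h'\|_X$ are bounded by $c\varepsilon$; note $\|g'-\delta\|_{C^0}\le\|g-\delta\|_{C^0}$. Subtracting the two copies of (\ref{eq:integraleeq}), $w$ solves
\begin{equation*}
w(t) = \int\Phi(\cdot,t;y,0)(g-g')(y)\,dy + \int_0^t\!\!\int \Phi\,(Q^0[h]-Q^0[h']) + \nabla^*\Phi\,(Q^1[h]-Q^1[h'])\,dy\,ds .
\end{equation*}
The differences are multilinear in $(h,h',\nabla h,\nabla h')$ with one factor equal to $w$ or $\nabla w$. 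Schematically,
\begin{equation*}
Q^0[h]-Q^0[h'] = (\nabla h+\nabla h')*\nabla w + w*\nabla h*\nabla h',
\end{equation*}
and similarly $Q^1[h]-Q^1[h'] = w*\nabla h + h'*\nabla w$, with smooth coefficient functions of $h,h'$.

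\textbf{Step 1: weighted linear estimates.} I would prove the weighted analogues of the Koch--Lamm linear bounds:
\begin{equation*}
\Big\|\int\Phi(\cdot,t;y,0)f_0\Big\|_{\tilde X_T}\le c\,\|e^{-\lambda\psi}f_0\|_{L^\infty},\qquad \|\mathcal{S}F\|_{\tilde X_T}\le c\,\|F\|_{\tilde Y_T},
\end{equation*}
where $\mathcal{S}$ is the Duhamel operator for $F = F_0+\nabla^*F_1$.

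The key point is that for $0<t<T$, $|x-y|$ is compared to $\sqrt t\le\sqrt T = \lambda^{-1}$, and $\psi$ is $1$-Lipschitz, so
\begin{equation*}
e^{-\lambda\psi(x)}\le e^{\lambda|x-y|}e^{-\lambda\psi(y)} .
\end{equation*}
The factor $e^{\lambda|x-y|}$ is absorbed by the Gaussian, since $e^{\lambda|z|-|z|^2/(8t)}\le C e^{-|z|^2/(16t)}$ for $t\le\lambda^{-2}$. The same is true for $\nabla\Phi$ and the second-derivative kernels.

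For the local $L^2$ and $L^{n+4}$ pieces on $B(x,r)\times(0,r^2)$, with $r<\sqrt T$, the weight varies by at most $e^{\lambda r}\le e$ across the ball. So one can run Koch--Lamm's proof verbatim: decompose the source into dyadic annuli around $x$, and use Gaussian off-diagonal decay to sum the weights $e^{\lambda 2^k r}$ against $e^{-4^k}$. This step, namely checking that every piece of the Koch--Lamm argument (especially the energy estimate for the $L^2$ gradient part and the $\nabla^*F_1$ term) survives the weight with constants depending only on $n$, is the main technical obstacle. I would handle it by working with kernel bounds rather than energy methods wherever possible.

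\textbf{Step 2: bilinear estimate with one unweighted factor.} Since the weight appears only once, I would show
\begin{equation*}
\|A*\nabla w\|_{\tilde Y^0_T}+\|w*\nabla A\|_{\tilde Y^0_T}+\|A*\nabla w\|_{\tilde Y^1_T}+\|w*\nabla A\|_{\tilde Y^1_T}\le c\,\|A\|_X\,\|w\|_{\tilde X_T}.
\end{equation*}
This is Hölder's inequality at each scale, exactly as in Koch--Lamm, with the weight $e^{-\lambda\psi(x)}$ carried by the $w$-factor. The trilinear term is handled the same way, using the $L^\infty$ bounds on $h,h'$.

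\textbf{Step 3: absorption.} Combining the two steps gives
\begin{equation*}
\|w\|_{\tilde X_T}\le c\,\|e^{-\lambda\psi}(g-g')\|_{L^\infty}+c\,(\|h\|_X+\|h'\|_X)\,\|w\|_{\tilde X_T}\le c\,\|e^{-\lambda\psi}(g-g')\|_{L^\infty}+c\varepsilon\|w\|_{\tilde X_T}.
\end{equation*}
Choosing $\varepsilon(n)$ small absorbs the last term, provided $\|w\|_{\tilde X_T}<\infty$ a priori. If $\psi$ is bounded, finiteness follows from the unweighted bound $\|w\|_X\le c\varepsilon$, since the weights are then comparable to constants. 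For general $1$-Lipschitz $\psi$, I would apply the argument to $\psi_N=\min(\psi,N)$, which is still $1$-Lipschitz with constants independent of $N$, and let $N\to\infty$ by monotone convergence.

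Finally, to justify that $g_t$ and $g'_t$ satisfy the integral equation with the stated initial data, I would use (\ref{eq:RDTFinitialcondition}) together with the uniqueness of Koch--Lamm solutions in $X$.
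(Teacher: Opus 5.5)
Your plan is essentially the paper's proof: the paper proves exactly your weighted homogeneous, Duhamel, and bilinear estimates (Lemmas \ref{lemma:homogeneous}, \ref{lemma:hard}, \ref{lemma:easy}), using the same two facts, namely that $e^{-\lambda\psi}$ changes by at most a bounded factor on balls of radius $r<\sqrt T$ and that $e^{\lambda|x-y|}$ is absorbed by the Gaussian, and then absorbs $c\varepsilon\|g_t-g_t'\|_{\tilde X_T}$ for $\varepsilon(n)$ small. Your attention to the a priori finiteness of $\|w\|_{\tilde X_T}$ is a good addition (the paper passes over it, harmlessly for its bounded $\psi$), but the truncation should be $\psi_N=\max(\psi,-N)$ rather than $\min(\psi,N)$: finiteness needs $e^{-\lambda\psi}$ bounded above, i.e.\ $\psi$ bounded below, and with $\psi_N\ge\psi$ the right-hand side for $\psi_N$ is dominated by the one for $\psi$, while $e^{-\lambda\psi_N}\uparrow e^{-\lambda\psi}$ gives the limit by monotone convergence.
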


To prove Theorem \ref{thm:weighteddifference}, we establish three lemmata. These are weighted versions of \cite[Lemmata 2.2, 4.1, and 4.2]{KochLamm12}. First note that, since $\psi$ is 1-Lipschitz, if $y\in B(x, 2r)$ and $r < \sqrt{T}$ then
\begin{equation}\label{eq:weightcomparison}
\exp(-\lambda\psi(x)) \leq \exp(-\lambda \psi(y)) \exp(2\lambda r) \leq e^2 \exp(-\lambda\psi(y)).
\end{equation}
\begin{lemma}\label{lemma:homogeneous}
Suppose $u_0$ is some bounded, continuous $(0,2)$-tensor field on $\R^n$, and for $t>0$ let $u_t$ be the smooth $(0,2)$-tensor field given by
\begin{equation*}
u(x,t) = \int_{\R^n} \Phi(x,t;y,0)u_0(y)dy.
\end{equation*}
Then
\begin{equation*}
|| u_t||_{\tilde X_T} \leq c(n)||e^{-\lambda \psi}u_0||_{L^\infty(\R^n)}.
\end{equation*}
\end{lemma}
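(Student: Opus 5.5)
The plan is to reduce to the unweighted Koch--Lamm estimate (\cite[Lemma 2.2]{KochLamm12}) by localizing the initial data around each base point $x$ and exploiting that the weight $e^{-\lambda\psi}$ varies by at most a bounded factor on balls of radius comparable to $\sqrt{T}$, by (\ref{eq:weightcomparison}). Set $A := ||e^{-\lambda\psi}u_0||_{L^\infty(\R^n)}$, so $|u_0(y)| \leq A e^{\lambda\psi(y)}$. Since $\psi$ is $1$-Lipschitz, $e^{\lambda\psi(y)} \leq e^{\lambda\psi(x)}e^{\lambda|x-y|}$. We will treat the sup term and the two gradient terms of $||\cdot||_{\tilde X_T}$ separately.

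\textbf{Sup term.} Fix $x$ and $0<t<T$. Then
\[
e^{-\lambda\psi(x)}|u(x,t)| \leq A\int_{\R^n}(4\pi t)^{-n/2}e^{-|x-y|^2/4t}e^{\lambda|x-y|}\,dy .
\]
Substituting $z=(x-y)/\sqrt t$ and using $\lambda\sqrt t = \sqrt{t/T}<1$, the integral is at most $\int (4\pi)^{-n/2}e^{-|z|^2/4+|z|}dz = c(n)$. This gives the $L^\infty$ part.

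\textbf{Gradient terms.} Fix $x$ and $r<\sqrt T$. Split $u_0 = u_0\mathbf 1_{B(x,2r)} + u_0\mathbf 1_{\R^n\setminus B(x,2r)} =: u_0^{near}+u_0^{far}$, with corresponding caloric solutions $u^{near},u^{far}$.

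For the near part, $||u_0^{near}||_{L^\infty}\leq A e^{\lambda\psi(x)}e^{2}$ by (\ref{eq:weightcomparison}). The unweighted estimate \cite[Lemma 2.2]{KochLamm12} then gives
\[
r^{-n/2}||\nabla u^{near}||_{L^2(B(x,r)\times(0,r^2))}+r^{\frac{2}{n+4}}||\nabla u^{near}||_{L^{n+4}(B(x,r)\times(r^2/2,r^2))}\leq c(n)Ae^{\lambda\psi(x)}.
\]

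For the far part, we use the pointwise kernel bound: for $z\in B(x,r)$ and $s<r^2$,
\[
|\nabla u^{far}(z,s)|\leq cA e^{\lambda\psi(x)}\int_{|y-x|\geq 2r} s^{-(n+1)/2}e^{-|z-y|^2/5s}e^{\lambda|x-y|}dy .
\]
Here $|z-y|\geq |x-y|/2\geq r$ and $\lambda|x-y|\leq |x-y|/r$. With $\rho=|x-y|$, the integrand is bounded by $s^{-(n+1)/2}e^{-\rho^2/20s+\rho/r}$, and since $s\leq r^2$ we have $\rho/r\leq \rho^2/40s+10s/r^2\cdot$const. The total is therefore at most $c\,s^{-1/2}e^{-c'r^2/s}\leq c/r$. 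Integrating over $B(x,r)\times(0,r^2)$ in $L^2$ and over $B(x,r)\times(r^2/2,r^2)$ in $L^{n+4}$ gives the right scaling, namely $r^{-n/2}\cdot r^{-1}\cdot r^{(n+2)/2}=O(1)$ and $r^{2/(n+4)}\cdot r^{-1}r^{(n+2)/(n+4)}=O(1)$.

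Summing the near and far contributions and multiplying by $e^{-\lambda\psi(x)}$ yields the claimed bound $c(n)A$ for the gradient terms. Combined with the sup term, this proves the lemma.

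The main obstacle is the far-field computation: the factor $e^{\lambda|x-y|}$ must be absorbed by the Gaussian tail uniformly for all $r<\sqrt T$ and $s<r^2$. This is exactly where the restriction $r<\sqrt T=\lambda^{-1}$ is used, since it guarantees $\lambda\rho\leq \rho/r$, which the Gaussian $e^{-\rho^2/cs}$ dominates for $s\leq r^2$.
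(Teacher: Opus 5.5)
Your proof is correct, but it is organized differently from the paper's.

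\textbf{The paper's route.} It never splits the initial data. It first proves the two weighted pointwise bounds (\ref{eq:ptwiseC0bound}) and (\ref{eq:ptwiseC1bound}) directly from the kernel and the Lipschitz property of $\psi$; your sup-term computation is exactly the first of these. It then gets the $L^{n+4}$ term in one line, by integrating $|\nabla u|\le c\,t^{-1/2}e^{\lambda\psi}\,\|e^{-\lambda\psi}u_0\|_{L^\infty}$ over $B(x,r)\times(r^2/2,r^2)$ and moving the weight with (\ref{eq:weightcomparison}). For the $L^2$ term, $t^{-1/2}$ is not square-integrable in time. So the paper redoes the Koch--Lamm energy (Caccioppoli) argument with a cutoff on $B(x,2r)$. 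It then uses (\ref{eq:weightcomparison}) and the weighted sup bound to control $\int_{B(x,2r)}|u_0|^2$ and $r^{-2}\int_0^{r^2}\!\int_{B(x,2r)}|u|^2$.

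\textbf{Your route.} You split $u_0$ into two pieces.
\begin{itemize}
\item The piece supported in $B(x,2r)$: there the weight is comparable to $e^{\lambda\psi(x)}$, so the unweighted Koch--Lamm estimate applies verbatim.
\item The far piece: its gradient on $B(x,r)\times(0,r^2)$ is bounded by $cAe^{\lambda\psi(x)}/r$ uniformly down to $s=0$. So neither an energy estimate nor the $t^{-1/2}$ singularity enters there.
\end{itemize}

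\textbf{What each buys.} Your approach uses the unweighted lemma as a black box. It isolates the single place where $r<\sqrt T$ is needed: absorbing $e^{\lambda\rho}\le e^{\rho/r}$ into the Gaussian via $\rho/r\le \rho^2/(40s)+10s/r^2$. The paper's approach is self-contained and yields the weighted pointwise gradient bound as a byproduct.

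One small point: your near-part data $u_0\mathbf 1_{B(x,2r)}$ is discontinuous, so you need the unweighted estimate for bounded measurable data. This is fine, since Koch--Lamm work with $L^\infty$ initial data. In any case, the energy argument applies directly, because that data lies in $L^2\cap L^\infty$.
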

\begin{proof}
We first prove the pointwise bounds
\begin{equation}\label{eq:ptwiseC0bound}
|\exp(-\lambda \psi(x))u(x,t)| \leq c(n)|| \exp(-\lambda \psi) u_0||_{L^\infty(\R^n)}
\end{equation}
and
\begin{equation}\label{eq:ptwiseC1bound}
|\exp(-\lambda \psi(x))\nabla u(x,t)| \leq \frac{c(n)}{\sqrt{t}}|| \exp(-\lambda \psi) u_0||_{L^\infty(\R^n)}
\end{equation}
for all $x\in \R^n$ and $0 < t < T$. Because $\psi$ is 1-Lipschitz, we have
\begin{align*}
|\exp(-\lambda \psi(x))u(x,t)| &= \int_{\R^n} \Phi(x,t;y,0)\exp(-\lambda \psi(y))\exp(\lambda \psi(y)-\lambda \psi(x))|u_0(y)|dy
\\& \leq \int_{\R^n} (4\pi t)^{-n/2}\exp\left(-\frac{|x-y|^2}{4t} + \frac{|x-y|}{\sqrt{T}}\right)dy ||\exp(-\lambda \psi)u_0||_{L^\infty(\R^n)}
\\& \leq c(n) ||\exp(-\lambda \psi)u_0||_{L^\infty(\R^n)},
\end{align*}
where in the last step we are using the fact that $t \leq T$. This proves (\ref{eq:ptwiseC0bound}), and the proof of (\ref{eq:ptwiseC1bound}) is similar:
\begin{align*}
|\exp(-\lambda \psi(x))\nabla u(x,t)| & \leq \int_{\R^n} |\nabla \Phi(x,t;y,0)|\exp(-\lambda \psi(y))\exp(\lambda \psi(y)-\lambda \psi(x))|u_0(y)|dy
\\& \leq \int_{\R^n} \frac{|x-y|}{2t} (4\pi t)^{-n/2}\exp\left(-\frac{|x-y|^2}{4t} + \frac{|x-y|}{\sqrt{T}}\right)dy ||\exp(-\lambda \psi)u_0||_{L^\infty(\R^n)}
\\& \leq \frac{c(n)}{\sqrt{t}}||\exp(-\lambda \psi)u_0||_{L^\infty(\R^n)}.
\end{align*}
We now estimate $||u_t||_{\tilde X_T}$. The $L^\infty$ term is handled by (\ref{eq:ptwiseC0bound}). To estimate the remaining terms, we argue as in \cite[Lemma 2.2]{KochLamm12}: Fix $x\in \R^n$ and $0 < r < \sqrt{T}$. pair $u_t$ with a cutoff function $\eta$ that is identically $1$ on $B(x, r)$, identically $0$ outside of $B(x, 2r)$, with $|\nabla \eta| \leq 10/r$ and integrate by parts over $\R^n \times (0,r^2)$ and apply Young's inequality to find that
\begin{equation*}
\exp(-2\lambda \psi(x)) \int_0^{r^2}\int_{B(x,r)} |\nabla u|^2 \leq 8\exp(-2\lambda \psi(x))\int_{B(x, 2r)} |h_0|^2 + \frac{8\exp(-2\lambda \psi(x))}{r^2}\int_0^{r^2}\int_{B(x, 2r)} |h|^2.
\end{equation*}
Applying (\ref{eq:weightcomparison}) we find
\begin{align*}
\exp(-2\lambda \psi(x)) \int_0^{r^2}\int_{B(x,r)} |\nabla u|^2 &\leq c\int_{B(x, 2r)} |\exp(-\lambda \psi)h_0|^2 + \frac{c}{r^2}\int_0^{r^2}\int_{B(x, 2r)} |\exp(-\lambda \psi)h|^2
\\& \leq c(n)r^n||\exp(-\lambda\psi) h_0||_{L^\infty(\R^n)}^2 + \frac{c(n)}{r^2}r^2r^n||\exp(-\lambda\psi) h||_{L^\infty(B(x,2r))}^2
\\& \leq c(n)r^n||\exp(-\lambda\psi) h_0||_{L^\infty(\R^n)}^2, 
\end{align*}
where in the last step we have used (\ref{eq:ptwiseC0bound}).

The $L^{n+4}$-term follows from (\ref{eq:ptwiseC1bound}) by integration and (\ref{eq:weightcomparison}):
 \begin{align*}
\exp(-\lambda\psi(x))r^{\frac{2}{n+4}}\left(\int_{r^2/2}^{r^2}\int_{B(x,r)} |\nabla u|^{n+4}\right)^{\frac{1}{n+4}} & \leq cr^{\frac{2}{n+4}}\left(\int_{r^2/2}^{r^2}\int_{B(x,r)}\exp(-(n+4)\lambda\psi(y))|\nabla u|^{n+4}\right)^{\frac{1}{n+4}}
 \\& \leq cr^{\frac{2}{n+4}}\left(\int_{r^2/2}^{r^2}\int_{B(x,r)} \frac{c(n)}{\sqrt{s}^{n+4}}||\exp(-\lambda\psi) u_0||_{L^\infty(\R^n)}^{n+4}\right)^{\frac{1}{n+4}}
 \\& \leq c(n)r^{\frac{2}{n+4}}||\exp(-\lambda\psi) u_0||_{L^\infty(\R^n)}(r^{n+2}r^{-n-4})^{\frac{1}{n+4}} 
 \\& = c(n)||\exp(-\lambda\psi) u_0||_{L^\infty(\R^n)}.
 \end{align*}
\end{proof}

\begin{lemma}\label{lemma:hard}
Suppose $Q(x,t) \in \tilde Y_T$ and that $w_t$ is the family of $(0,2)$-tensors on $\R^n$ given by
\begin{equation*}
w_t(x) = \int_0^t \int_{\R^n} \Phi(x,t;y,s)Q(y,s).
\end{equation*}
Then
\begin{equation*}
|| w||_{\tilde X_T} \leq c(n) || Q||_{\tilde Y_T}.
\end{equation*}
\end{lemma}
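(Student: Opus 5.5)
The plan is to follow the proof of \cite[Lemma 4.1]{KochLamm12} (the unweighted version of this estimate) and insert the exponential weight using the local comparison (\ref{eq:weightcomparison}) together with the Gaussian-versus-exponential absorption already used in Lemma \ref{lemma:homogeneous}. By linearity and the definition of $||\cdot||_{\tilde Y_T}$ it suffices to treat two cases separately and then take the infimum over decompositions: $Q = f_0$ with $f_0\in \tilde Y^0_T$, and $Q = \nabla^* f_1$ with $f_1\in\tilde Y^1_T$. In the second case we move the derivative onto the kernel, so that $w = \int_0^t\int \nabla^*\Phi\, f_1$.

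\textbf{Pointwise step.} Fix $(x,t)$ with $t<T$ and set $r=\sqrt t$. We split the $y$-integration into the parabolic ball $B(x,2r)$ and dyadic annuli $A_k = B(x,2^{k+1}r)\setminus B(x,2^k r)$. We also split the time integration into $(0,t/2)$ and $(t/2,t)$, as in Koch--Lamm.
\begin{itemize}
\item On the near region and late times, we use (\ref{eq:weightcomparison}) to replace $e^{-\lambda\psi(x)}$ by $e^{-\lambda\psi(y)}$ up to a factor $e^2$. The unweighted Koch--Lamm estimate (H\"older with the $L^{(n+4)/2}$, resp.\ $L^{n+4}$, pieces on $(r^2/2,r^2)$) then gives a bound by $||Q||_{\tilde Y_T}$.
\item On the far annuli, cover $A_k$ by $\sim 2^{kn}$ balls of radius $r$ centered at points $z$. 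For each such ball, $e^{-\lambda\psi(x)} \le e^{\lambda|x-z|}e^{-\lambda\psi(z)} \le e^{2^{k+1}r/\sqrt T}e^{-\lambda\psi(z)}$. The heat kernel (or its gradient) contributes a factor $e^{-c4^k r^2/(t-s)}$. Since $r\le \sqrt T$ and $t-s\le t=r^2$, we have $2^{k+1}r/\sqrt T \le 2^{k+1}$, and the Gaussian factor $e^{-c4^k}$ dominates both $e^{2^{k+1}}$ and the polynomial count $2^{kn}$.
\end{itemize}
The dyadic sum therefore converges with a constant depending only on $n$. This yields the weighted sup bound on $e^{-\lambda\psi}w$, and also a weighted $t^{-1/2}$-type gradient bound on $(r^2/2,r^2)$, which handles the $L^{n+4}$ part of $||w||_{\tilde X_T}$. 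One subtlety: the $\tilde Y_T$ norms only involve radii $<\sqrt T$, which is consistent because we only need $t<T$.

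\textbf{Energy step.} For the $L^2$ gradient term we use a cutoff energy estimate on $B(x,r)\times(0,r^2)$, exactly as in Lemma \ref{lemma:homogeneous}. Pairing the equation $\partial_t w - \Delta w = Q$ with $\eta^2 w$, the $f_0$ contribution is bounded by $||w||_{L^\infty(B(x,2r)\times(0,r^2))}\,||f_0||_{L^1(B(x,2r)\times(0,r^2))}$, and the $f_1$ contribution is absorbed via Young's inequality into $||f_1||_{L^2}^2$. Multiplying by $e^{-2\lambda\psi(x)}$ and using (\ref{eq:weightcomparison}) on $B(x,2r)$ (covered by finitely many balls of radius $r$) converts every term into weighted quantities. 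These are controlled by the pointwise step and by $||Q||_{\tilde Y_T}$.

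\textbf{Main obstacle.} I expect the main obstacle to be the far-field/early-time part of the pointwise estimate. There the weight grows like $e^{|x-y|/\sqrt T}$ while only $\tilde Y_T$ local averages of $Q$ are available, not pointwise values. I would handle it by summing over parabolic cylinders $B(z,\rho)\times(\rho^2/2,\rho^2)$ at all dyadic scales $\rho\le r$, in the Koch--Lamm manner, and checking in each cylinder that the Gaussian factor $\exp(-|x-z|^2/(Ct))$ beats $\exp(|x-z|/\sqrt T)$ uniformly. This uses crucially that $t<T$.
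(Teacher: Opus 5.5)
Your overall architecture matches the paper's. You split at the scale $\sqrt t$, use (\ref{eq:weightcomparison}) near the diagonal, absorb $e^{|x-z|/\sqrt T}\le e^{|x-z|/\sqrt t}$ into the Gaussian far away because $t<T$, and use the cutoff energy estimate for the $L^2$ gradient term. The genuine gap is the $L^{n+4}$ gradient term. You claim the pointwise step gives $e^{-\lambda\psi}|\nabla w|\lesssim t^{-1/2}\|Q\|_{\tilde Y_T}$ on $(r^2/2,r^2)$, but no such bound holds for the near-diagonal contribution. For $Q=f_0$, H\"older against $\|f_0\|_{L^{(n+4)/2}}$ needs $\nabla\Phi(x,t;\cdot,\cdot)\in L^{(n+4)/(n+2)}$ near the pole. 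However, $\nabla\Phi$ is locally in $L^q$ only for $q<(n+2)/(n+1)$, and $(n+2)/(n+1)<(n+4)/(n+2)$. By duality, $f_0\mapsto\nabla w(x,t)$ is therefore unbounded on $L^{(n+4)/2}$ of the parabolic neighbourhood. For $Q=\nabla^*f_1$ the kernel is $\nabla^2\Phi$, which is not locally integrable at all. Here $\nabla w$ is a parabolic singular integral of $f_1$, and even bounded $f_1$ only gives a BMO gradient.

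The missing step is the one the paper takes. First, prove the pointwise $t^{-1/2}$ gradient bound only for the part of $Q$ supported outside $B(x,2r)\times(r^2/4,r^2)$, by the same Gaussian-plus-weight argument, and integrate it to control that part of the $L^{n+4}$ norm. Then, for the part supported inside $B(x,2r)\times(r^2/4,r^2)$, pull the weight out of the integral, since by (\ref{eq:weightcomparison}) it is comparable to $e^{-\lambda\psi(x)}$ up to a factor $e^{2}$ there. Finally, apply the unweighted local estimate of \cite{KochLamm12}, which rests on parabolic $L^p$ (Calder\'on--Zygmund) theory rather than pointwise kernel bounds.

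Your ``main obstacle'' paragraph also heads the wrong way. The early-time near piece $B(x,2\sqrt t)\times(0,t/2)$, which your two bullets leave uncovered, is handled together with the whole far field at the \emph{single} scale $\sqrt t$. Away from the parabolic neighbourhood of $(x,t)$, one has $|\Phi|\lesssim t^{-n/2}e^{-|x-y|^2/(ct)}$ and $|\nabla\Phi|\lesssim t^{-(n+1)/2}e^{-|x-y|^2/(ct)}$ uniformly in $s$. So after covering $\R^n$ by balls $B(z_i,\sqrt t)$, you bound each piece directly with $r=\sqrt t$: by $r^{-n}\|f_0\|_{L^1(B(z_i,r)\times(0,r^2))}$, and via Cauchy--Schwarz by $r^{-n/2}\|f_1\|_{L^2(B(z_i,r)\times(0,r^2))}$. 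These components of $\tilde Y_T$ integrate over the entire interval $(0,r^2)$, which is exactly what is needed, so local averages suffice. Your proposed sum over cylinders $B(z,\rho)\times(\rho^2/2,\rho^2)$ for all dyadic $\rho\le\sqrt t$ would instead diverge. Each dyadic time slab contributes about $(\sqrt t/\rho)^n\cdot\rho^n\cdot t^{-n/2}\|Q\|_{\tilde Y_T}\approx\|Q\|_{\tilde Y_T}$, and the Gaussian factor gives no gain in $\rho$.
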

\begin{proof}
The proof is similar to that of \cite[Lemma 4.2]{KochLamm12}. Write $Q = Q^0 + \nabla Q^1$. We first estimate the $L^\infty$ part of the $||w||_{\tilde X_T}$. Let $\Omega(y,t):= B(y,t) \times [\tfrac{t}{2}, t]$. We have: 
\begin{align*}
\sup_{0<t<r^2}||\exp(-\lambda\psi) w||_{L^\infty(B(x,r))} & \leq \sup_{\substack{0<t<r^2 \\ y\in B(x,r)}}\left|\exp(-\lambda\psi(y))\int_{\Omega(y,t)}\Phi(y,t;z,s)Q(z,s)dzds\right| 
\\& \qquad \qquad + \left|\exp(-\lambda\psi(y))\int_{M\times[0,t]\setminus\Omega(y,t)}\Phi(y,t;z,s)Q(z,s)dzds\right|
\\& =: \sup_{\substack{0<t<r^2 \\ y\in B(x,r)}} I + \sup_{\substack{0<t<r^2 \\ y\in B(x,r)}}II.
\end{align*}
For any $0<t <r^2$ and $y\in B(x,r)$ we proceed as follows: Using H\"older's inequality, we find that
\begin{align*}
I &\leq \omega(y)||\Phi(y,t;\cdot,\cdot)||_{L^{\tfrac{n+4}{n+2}}(\Omega(y,t))}||Q^0||_{L^{\tfrac{n+4}{2}}(\Omega(y,t))} 
\\& \qquad\qquad + \omega(y)||\nabla \Phi(y, t; \cdot,\cdot)||_{L^{\tfrac{n+4}{n+3}}(\Omega(y,t))}||Q^1||_{L^{n+4}(\Omega(y,t))}
\\& \qquad \qquad \qquad \leq c||Q||_{\tilde Y(y,\sqrt{t})},
\end{align*}
where the last inequality is due to \cite[Lemma 2.1]{KochLamm12} as follows:
\begin{align*}
\left(\int_{t/2}^{t}\int_{B(y,\sqrt{t})}|\Phi(y,t; z,s)|^{\frac{n+4}{n+2}}dzds\right)^{\tfrac{n+2}{n+4}} & \leq c\left(\int_0^{t/2}\int_{B(y,\sqrt{t})}(|y-z| + \sqrt{s})^{-n\frac{n+4}{n+2}}dzds\right)^{\tfrac{n+2}{n+4}}
\\& \leq c\left(t^{\frac{n}{2}}t^{-\frac{n(n+4)}{2(n+2)} + 1}\right)^{\tfrac{n+2}{n+4}} = c\sqrt{t}^{\frac{4}{n+4}},
\end{align*}
and similarly for $\nabla \Phi$.

We now estimate $II$. Let $\{z_i\}$ be a maximal collection of points in $\R^n$ such that the balls $B(z_i, \sqrt{t}/2)$ are pairwise disjoint. Observe that $\{B(z_i, \sqrt{t})\}_{i=1}^{\infty}$ is a cover of $\R^n$. By \cite[Lemma 2.1]{KochLamm12} (after parabolically rescaling) and integrating by parts we have
\begin{align*}
II & \leq \left|\exp(-\lambda\psi(x))\int_{\R^n\times [0,t]\setminus \Omega(x,t)}\Phi(x,t;z,s)Q(z,s)dzds\right|
\\& \leq \left|\sum_{i=0}^{\infty}\exp(-\lambda\psi(x))\int_0^t\int_{B(z_i,\sqrt{t})}\exp\left(-\frac{|x-z|^2}{ct}\right)\sqrt{t}^{-n}\left(Q^0(z,s) + t^{-1/2}Q^1(z,s)\right)dzds\right|
\\& \leq \left|\sum_{i=0}^{\infty}\exp\left(-\frac{|x - z_i|^2}{ct} + \frac{|x - z_i|}{\sqrt{T}}\right)\exp(-\lambda\psi(z_i))\int_0^t\int_{B(z_i,\sqrt{t})}\sqrt{t}^{-n}(Q^0(z,s) + t^{-1/2}Q^1(z,s))dzds\right|
\\& \leq c\left|\sum_{i=0}^{\infty}\exp\left(-\frac{|x - z_i|^2}{2ct}\right)\exp(-\lambda\psi(z_i))\int_0^t\int_{B(z_i,\sqrt{t})}\sqrt{t}^{-n}(Q^0(z,s) + t^{-1/2}Q^1(z,s))dzds\right|
\\& \leq c\sum_{i=0}^{\infty}\exp\left(-\frac{|x - z_i|^2}{2ct}\right)\exp(-\lambda\psi(z_i))(\sqrt{t}^{-n}||Q^0||_{L^1(B(z_i, \sqrt{t})\times (0,t))} + \sqrt{t}^{-n/2}||Q^1||_{L^2(B(z_i, \sqrt{t})\times (0,t))})
\\& \leq c||Q||_{\tilde Y_T}.
\end{align*}
This establishes the $L^\infty$ estimate. To estimate the $L^2$ term, fix $x\in \R^n$ and $0 < r < \sqrt{T}$, and multiply by a cutoff function and integrate as in the proof of Lemma \ref{lemma:homogeneous} to find that
\begin{equation*}
\int_0^{r^2}\int_{B(x,r)}|\nabla w|^2 \leq \frac{c}{r^2}\int_0^{r^2}\int_{B(x,2r)}|w|^2 + c\int_{0}^{r^2}\int_{B(x,2r)} |Q^0||w| + c\int_0^{r^2}\int_{B(x,2r)} |Q^1|^2.
\end{equation*}
Then, by (\ref{eq:weightcomparison}) and applying the $L^\infty$ estimate to $w$, we have
\begin{align*}
&\exp(-2\lambda\psi(x)) r^{-n}\int_0^{r^2}\int_{B(x,r)}|\nabla w|^2   \leq c(n)|| \exp(-\lambda\psi) w||_{L^\infty(B(x, 2r) \times (0, r^2))}^2 
\\& \qquad +  c(n)||\exp(-\lambda\psi)w||_{L^\infty(B(x, 2r)\times (0, r^2))}\exp(-\lambda\psi(x))r^{-n}||Q^0||_{L^1(B(x, 2r)\times(0, r^2))}
\\& \qquad \qquad + c\exp(-2\lambda\psi(x))r^{-n}||Q^1||_{L^2(B(x, 2r)\times (0, r^2))}
\\& \leq c(n)|| \exp(-\lambda\psi) w||_{L^\infty(B(x, 2r) \times (0, r^2))}
\\& \qquad +  c(n)||\exp(-\lambda\psi)w||_{L^\infty(B(x, 2r)\times (0, r^2))}\sum_{i=1}^k\exp(-\lambda\psi(z_i))\exp(2r/\sqrt{T})r^{-n}||Q^0||_{L^1(B(z_i, r)\times(0, r^2))}
\\& \qquad \qquad + c\sum_{i = 1}^{k}\exp(-2\lambda\psi(z_i))\exp(2r/\sqrt{T})r^{-n}||Q^1||_{L^2(B(z_i, r)\times (0, r^2))}
\\& \leq c(n)||Q||_{\tilde Y_T},
\end{align*}
where in the penultimate step we are taking $\{B(z_i, r)\}_{i=1}^{k}$ to be a covering of $B(x, 2r)$ by $k(n)$-many balls, and using the fact that $r \leq \sqrt{T}$. This establishes the $L^2$ estimate.

To estimate the $L^{n+4}$-term, we argue as in the estimate for $II$ (noting that if $(y,t)\in B(x, r)\times (r^2/2, r^2)$ then $B(y, \sqrt{t})\times (t/2 ,t) \subset B(x, 2r)\times (r^2/4, r^2)$ to see that
\begin{align*}
&\sup_{(y,t)\in B(x, r)\times (r^2/2, r^2)}\exp(-\lambda\psi(y))\bigg| \int_{\R^n\times (0,t)\setminus B(x, 2r)\times(r^2/4, r^2)} \nabla \Phi(y,t;z,s) Q^0(z,s) 
\\& \qquad \qquad + \nabla^2 \Phi(y,t;z,s)Q^1(z,s)dzds\bigg|
\\& \leq c(n)\sqrt{t}^{-1}||Q||_{\tilde Y_T}
\end{align*}
and hence, by (\ref{eq:weightcomparison})
\begin{align*}
&\exp(-\lambda \psi(x))r^{\tfrac{2}{n+4}}\bigg|\bigg| \int_{\R^n\times (0,t)\setminus B(x, 2r)\times(r^2/4, r^2)} \nabla \Phi(\cdot,\cdot ;z,s) Q^0(z,s) 
\\& \qquad \qquad + \nabla^2 \Phi(\cdot, \cdot;z,s)Q^1(z,s) \bigg|\bigg|_{L^{n+4}(B(x,r)\times (r^2/2, r^2))}
\\& \leq c r^{\tfrac{2}{n+4}}\bigg|\bigg| \exp(-\lambda \psi(\cdot))\int_{\R^n\times (0,t)\setminus B(x, 2r)\times(r^2/4, r^2)} \nabla \Phi(\cdot,\cdot ;z,s) Q^0(z,s) 
\\& \qquad \qquad + \nabla^2 \Phi(\cdot, \cdot;z,s)Q^1(z,s) \bigg|\bigg|_{L^{n+4}(B(x,r)\times (r^2/2, r^2))}
\\& \leq c(n)r^{\tfrac{2}{n+4}}||Q||_{\tilde Y_T}||_{L^{n+4}(B(x,r)\times (r^2/2, r^2))}
\\& \leq c(n)||Q||_{\tilde Y_T}.
\end{align*}

Therefore, one may assume that $Q^0$ and $Q^1$ have support contained in $B(x, 2r) \times (r^2/4, r^2)$. The rest of the $L^{n+4}$ estimate then follows as in \cite[Lemma 4.2]{KochLamm12}, since the weight does not need to be passed inside the integral in this region.
\end{proof}

\begin{lemma}\label{lemma:easy}
Let $g_t$ and $g_t'$ be Ricci-DeTurck flows as in the statement of Theorem \ref{thm:weighteddifference}. Let $h = g_t - \delta$ and $h' = g_t'-\delta$. Then
\begin{equation*}
|| Q^0[h] - Q^0[h'] + \nabla^* Q^1[h] - \nabla^* Q^1[h']||_{\tilde Y_T} \leq c(n) (|| h||_{X_T} + ||h'||_{X_T}) || h - h'||_{\tilde X_T}.
\end{equation*}
\end{lemma}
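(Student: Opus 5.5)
The plan is to treat this as a weighted bilinear estimate. The weight $\exp(-\lambda\psi)$ is placed on the difference $h-h'$, and the unweighted $X_T$-norm is placed on the other factor; this is the weighted analogue of \cite[Lemma 4.1]{KochLamm12}.

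\textbf{Algebraic splitting.} I would first decompose the two differences:
\begin{align*}
Q^0[h]-Q^0[h'] &= \nabla(h-h') * \nabla h + \nabla h' * \nabla (h-h') + (h-h') * \nabla h' * \nabla h',\\
Q^1[h]-Q^1[h'] &= (h-h') * \nabla h + h' * \nabla(h-h').
\end{align*}
This uses the explicit structure in (\ref{eq:heveuclbackground}). The coefficients $(\delta+h)^{-1}\star(\delta+h)^{-1}$ are smooth functions of $h$ on the set $|h|\le 1/2$. Their differences are therefore bounded by $c|h-h'|$, and the coefficients themselves are bounded by constants. Both facts hold because $\|h\|_{L^\infty},\|h'\|_{L^\infty}\le c\varepsilon$ by Lemma \ref{lemma:KL}. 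In the last term of the $Q^0$ difference, the extra factor $\|h'\|_{L^\infty}$ is absorbed into the constant; alternatively it contributes $\|h'\|_{X_T}^2\le c\|h'\|_{X_T}$.

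\textbf{Estimating each piece.} Since $\|\cdot\|_{\tilde Y_T}$ is an infimum over decompositions $f_0+\nabla^*f_1$, it suffices to put the $Q^0$-difference in $\tilde Y^0_T$ and the $Q^1$-difference in $\tilde Y^1_T$. Fix $x$ and $0<r<\sqrt T$.

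For a $\tilde Y^0$ term such as $\nabla(h-h')*\nabla h$, I would apply Cauchy--Schwarz on $B(x,r)\times(0,r^2)$:
\begin{equation*}
e^{-\lambda\psi(x)} r^{-n}\|\nabla(h-h')\,\nabla h\|_{L^1} \le \big(e^{-\lambda\psi(x)}r^{-n/2}\|\nabla(h-h')\|_{L^2}\big)\big(r^{-n/2}\|\nabla h\|_{L^2}\big) \le \|h-h'\|_{\tilde X_T}\|h\|_{X_T}.
\end{equation*}
The $L^{(n+4)/2}$ part on $B(x,r)\times(r^2/2,r^2)$ is handled by Hölder in the same way. The powers match because $r^{4/(n+4)}=r^{2/(n+4)}\cdot r^{2/(n+4)}$.

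For a $\tilde Y^1$ term such as $(h-h')*\nabla h$, I would use the pointwise weighted $L^\infty$ bound. By (\ref{eq:weightcomparison}), for $y\in B(x,r)$ we have $e^{-\lambda\psi(x)}\le e^2e^{-\lambda\psi(y)}$. Hence
\begin{equation*}
e^{-\lambda\psi(x)}|h-h'|(y,t)\le e^2\|h-h'\|_{\tilde X_T},
\end{equation*}
and the remaining factor $\nabla h$ is measured in the unweighted $L^2$ and $L^{n+4}$ norms, both controlled by $\|h\|_{X_T}$. For the term $h'*\nabla(h-h')$, I would instead take $\|h'\|_{L^\infty}\le\|h'\|_{X_T}$ and put the weight on $\nabla(h-h')$, which is controlled by the $\tilde X_T$ gradient terms. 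The cubic term $(h-h')*\nabla h'*\nabla h'$ is treated like the first case, with the weighted sup on $h-h'$ pulled out.

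\textbf{Main obstacle.} The only delicate point is the placement of the weight. In $\tilde Y_T$ and $\tilde X_T$ the weight is evaluated at the center $x$ of the ball, but the pointwise sup term of $\tilde X_T$ carries the weight at each point. Inequality (\ref{eq:weightcomparison}) reconciles the two, and it costs only $e^2$ because $r<\sqrt T=\lambda^{-1}$. Apart from this, the argument is Koch--Lamm's bilinear estimate verbatim. The restriction $r<\sqrt T$ in the weighted norms is harmless, since the $X_T$-norm suprema over all $r$ dominate the same quantities for $r<\sqrt T$.
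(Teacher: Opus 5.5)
Your proposal is correct and takes the same route as the paper. You split each difference into bilinear pieces with the weight on the $h-h'$ factor. You then apply Cauchy--Schwarz or H\"older on $B(x,r)\times(0,r^2)$ and on $B(x,r)\times(r^2/2,r^2)$, and use (\ref{eq:weightcomparison}) to move the weight between the ball's center and individual points. Your version is slightly more careful than the paper's. The paper's pointwise bound for $Q^0[h]-Q^0[h']$ ignores that the coefficients $(\delta+h)^{-1}$ depend on $h$. The extra cubic term $(h-h')*\nabla h'*\nabla h'$ that you control with $\|h'\|_{X_T}^2\le c\|h'\|_{X_T}$ is exactly what that step needs.
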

\begin{proof}
This is immediate from Definition \ref{def:weightednorms} and (\ref{eq:weightcomparison}), using the fact that
\begin{equation*}
\begin{split}
|Q^0[h] - Q^0[h']|  &= |\nabla h * \nabla h  - \nabla h * \nabla h' + \nabla h * \nabla h' - \nabla h' * \nabla h' | \leq c(n) (|h| + |h'|)|| h-h'|\\
|Q^1[h] - Q^1[h']| &= |h * \nabla h - h * \nabla h' + h * \nabla h' - h'*\nabla h'| \leq c(n)(|h| |\nabla (h - h')| + |h - h'||\nabla h'|).
\end{split}
\end{equation*}
For example, by (\ref{eq:weightcomparison}), for $r< \sqrt{T}$ and $x\in \R^n$ we have
\begin{align*}
\exp(-\lambda\psi(x))&r^{-n/2}||Q^1[h] - Q^1[h'] ||_{L^2(B(x,r)\times (0,r^2))} 
\\& \leq c(n)||h||_{L^\infty(\R^n\times (0,T))}\exp(-\lambda\psi(x))r^{-n/2}||\nabla(h - h')||_{L^2(B(x,r)\times (0, r^2))}
\\& \qquad \qquad + c(n)||\exp(-\lambda\psi)(h - h')||_{L^\infty(\R^n\times (0,T))}r^{-n/2}||\nabla h'||_{L^2(B(x,r)\times (0, r^2))}
\\& \leq c(n)(||h||_{X_T} + ||h'||_{X_T})|| h - h'||_{\tilde X_T}. 
\end{align*}
\end{proof}

We are now ready to prove Theorem \ref{thm:weighteddifference}.
\begin{proof}[Proof of Theorem \ref{thm:weighteddifference}]
As in Lemma \ref{lemma:easy}, let $h_t = g_t - \delta$ and $h_t' = g_t' - \delta$. We have $\partial(h_t - h_t') = \Delta (h_t - h_t') + Q[h_t] - Q[h_t']$ so that
\begin{equation*}
h_t(x) - h_t'(x) = \int_{\R^n}\Phi(x,t;y,0) (h_0(y)- h_0'(y))dy + \int_0^t\int_{\R^n}\Phi(x,t;y,s)(Q[h_s](y) - Q[h_s'](y))dyds
\end{equation*}
We apply Lemma \ref{lemma:homogeneous} to the first term, and apply Lemma \ref{lemma:hard} and then Lemma \ref{lemma:easy} to the second term to find that 
\begin{align*}
|| h_t - h_t'||_{\tilde X_T} &\leq \left|\left|\int_{\R^n}\Phi(\cdot,t;y,0) (h_0(y)- h_0'(y))dy\right|\right|_{\tilde X_T}
\\& + \left|\left| \int_0^t\int_{\R^n}\Phi(\cdot,t;y,s)(Q[h_s](y) - Q[h_s'](y))dyds\right|\right|_{\tilde X_T}
\\& \leq c(n)|| \exp(-\lambda \psi) (h_0 ' h_0')||_{L^\infty(\R^n)} + c(n)|| Q[h_t] - Q[h_t']||_{\tilde Y_T}
\\& \leq c(n)|| \exp(-\lambda \psi) (h_0 - h_0')||_{L^\infty(\R^n)} + c(n)(|| h_t||_{X_T} + ||h_t'||_{X_T})|| h_t - h_t'||_{\tilde X_T}.
\end{align*}
In particular, if $\varepsilon$ is sufficiently small depending on $n$, then Lemma \ref{lemma:KL} implies that $|| h_t||_{X_T} + ||h_t'||_{X_T} \leq c(n)\varepsilon$ so by subtraction we have
\begin{equation*}
|| h_t - h_t'||_{\tilde X_T} \leq c(n)|| \exp(-\lambda \psi) (h_0 - h_0')||_{L^\infty(\R^n)}, 
\end{equation*} 
with $c(n)$ adjusted. This proves the result.
\end{proof}

We now prove Lemma \ref{lemma:RDTFL2distortion}.
\begin{proof}[Proof of Lemma \ref{lemma:RDTFL2distortion}]
Let $\varepsilon'''$ denote the constant $\varepsilon$ from the statement of Theorem \ref{thm:weighteddifference}. Let $\bar r = 1$. We will increase $\bar r$ as needed throughout the proof. Now fix $0 < T < 1/2$. We first prove an estimate for $B(0,1)\times(0, T)$. Let $\psi(x) = \min\{d(x, B(0,1)), 3\}$, and let $g_t'$ be as in the statement of Theorem \ref{thm:weighteddifference}. Let $h_t = g_t - \delta$ and $h_t' = g_t' - \delta$.
\begin{align*}
|| \nabla h_t||_{L^2(B(0, 1)\times (0, T))} & \leq || \nabla h_t'||_{L^2(B(0, 1)\times (0, T))} +|| \nabla (h_t - h_t')||_{L^2(B(0, 1)\times (0, T))} 
\\& \leq || \nabla h_t'||_{L^2(B(0, 1)\times (0, 1))} + \sum_{i=1}^{k(n, T)}|| \nabla(h_t - h_t')||_{L^2(B(z_i, \sqrt{T})\times (0, T))}
\\& \leq || \nabla h_t'||_{L^2(B(0, 1)\times (0, 1))} + \sqrt{T}^{n/2}\sum_{i=1}^{k(n, T)}\sqrt{T}^{-n/2}|| \nabla(h_t - h_t')||_{L^2(B(z_i, \sqrt{T})\times (0, T))}
\\& \leq ||h_t'||_{X_1} + \sqrt{T}^{n/2}\sum_{i=1}^{k(n, T)}\exp(-\lambda \psi(z_i))\sqrt{T}^{-n/2}|| \nabla(h_t - h_t')||_{L^2(B(z_i, \sqrt{T})\times (0, T))},
\end{align*}
where we choose $\{B(z_i, \sqrt{T}/2)\}_{i=1}^{k(n,T)}$ to be a maximal disjoint collection of balls with $z_i\in B(0,1)$ so that $\{B(z_i, \sqrt{T})\}_{i=1}^{k(n,T)}$ is a cover of $B(0,1)$, and in the last step we are using the fact that if $z_i \in B(0,1)$ then $\psi(z_i) = d(z_i, B(0,1)) = 0$.
By Lemma \ref{lemma:KL} the first summand is bounded by
\begin{equation*}
||h_t'||_{X_1} \leq c(n)||h_0'||_{L^\infty(\R^n)} \leq c(n)||h_0||_{L^\infty(B(0,4))}.
\end{equation*}
The second summand is bounded by Theorem \ref{thm:weighteddifference} (with $T$ replaced by $2T$):
\begin{equation*}
\begin{split}
\sqrt{T}^{n/2}\sum_{i=1}^{k(n, T)}&\exp(-\lambda \psi(z_i))\sqrt{T}^{-n/2}|| \nabla(h_t - h_t')||_{L^2(B(z_i, \sqrt{T})\times (0, T))}  \leq k(n,T)\sqrt{T}^{n/2} || h_t - h_t''||_{\tilde X_{2T}}
\\& \leq c(n)k(n,T)\sqrt{T}^{n/2}|| \exp(-\psi/\sqrt{2T})(h_0 - h_0')||_{L^\infty(\R^n)}
\\& \leq c(n)k(n, T)\sqrt{T}^{n/2}\exp(-3/\sqrt{2T})||h_0||_{L^\infty(\R^n)}
\\& \leq c(n)\sqrt{T}^{-n/2}\exp(-3/\sqrt{2T})||h_0||_{L^\infty(\R^n)},
\end{split}
\end{equation*}
where we have used the fact that $\psi \geq 3$ whenever $h_0 \neq h_0'$ and that $||h_0'||_{L^\infty(\R^n)} \leq ||h_0||_{L^\infty(\R^n)}$, and in the last step we are estimating
\begin{equation*}
k(n,T) \leq c(n)\sqrt{T}^{-n}
\end{equation*}
since, because the $B(z_i, \sqrt{T}/2)$ are all disjoint, we have
\begin{equation*}
c(n)k(n,T)(\sqrt{T}/2)^n \leq \sum_{i=1}^{k(n,T)}|B(z_i, \sqrt{T}/2)| \leq |B(0, 2)|.
\end{equation*}

Combining the estimates for these two terms with the above calculation, we find
\begin{equation}\label{eq:scale1L2est}
|| \nabla h_t||_{L^2(B(0, 1)\times (0, T))} \leq c(n)||h_0||_{L^\infty(B(0,4))} + c(n)\sqrt{T}^{-n/2}\exp(-3/\sqrt{2T})||h_0||_{L^\infty(\R^n)}.
\end{equation}

Now let $r > \bar r$. We now apply (\ref{eq:scale1L2est}) to $\hat g_t(y) := g_{.01 r^2t}(.1ry + x)$, which by Remark \ref{rmk:parabolicrescaling} is a Ricci-DeTurck flow in the sense of Lemma \ref{lemma:KL} starting from $\hat g_0(y):= g_0(.1ry + x)$ to find that
\begin{align*}
&r^{-n/2}||\nabla g_t||_{L^2(B(x, .1r)\times (0, r^{2-\eta}))} = || \nabla \hat g_t||_{L^2(B(0,1)\times (0, 100r^{-\eta}))} 
\\& \leq c(n)|| \hat g_0 - \delta ||_{L^\infty(B(0, 4))} + c(n)\sqrt{100r^{-\eta}}^{-n/2}\exp(-3/\sqrt{200r^{-\eta}})||\hat h_0 - \delta||_{L^\infty(\R^n)}
\\& \leq c(n) ||g_0 - \delta||_{L^\infty(B(x, .4r))} + c(n)r^{n\eta/2}\exp(-\tfrac{3}{\sqrt{200}} r^{\eta/2})
\\&\leq c(n)r^{-\tau},
\end{align*}
where in the last step we are using that $r \geq \bar r(n, \eta, \tau)$, and choosing $\bar r$ sufficiently large so that for all $r \geq \bar r$,
\begin{equation*}
r^{n\eta/2}\exp(-\tfrac{3}{\sqrt{200}} r^{\eta/2}) \leq 2r^{-\tau}.
\end{equation*}
\end{proof}

\bibliographystyle{plain}
\bibliography{C0PMTbib}

\begin{thebibliography}{10}

\bibitem{AntonelliFogagnoloNardulliPozzetta26}
Gioacchino Antonelli, Mattia Fogagnolo, Stefano Nardulli, and Marco Pozzetta.
\newblock Positive mass and isoperimetry for continuous metrics with
  nonnegative scalar curvature.
\newblock {\em Annales de l'Institut Henri Poincar\'e C}, to appear.

\bibitem{Appleton18}
Alexander Appleton.
\newblock Scalar curvature rigidity and {R}icci {D}e{T}urck flow on
  perturbations of {E}uclidean space.
\newblock {\em Calculus of Variations and Partial Differential Equations}, 57,
  2018.

\bibitem{ArnowittDeserMisner61}
R.~Arnowitt, S.~Deser, and C.~W. Misner.
\newblock Coordinate invariance and energy expressions in general relativity.
\newblock {\em Phys. Rev.}, 122(3):997 -- 1006, May 1961.

\bibitem{BamlerKleiner22}
Richard Bamler and Bruce Kleiner.
\newblock Uniqueness and stability of {R}icci flow through singularities.
\newblock {\em Acta Mathematica}, 228, 2022.

\bibitem{Bamler14}
Richard~H. Bamler.
\newblock Stability of hyperbolic manifolds with cusps under {R}icci flow.
\newblock {\em Advances in Mathematics}, 263:412 -- 467, 2014.

\bibitem{Bamler16}
Richard~H. Bamler.
\newblock A {R}icci flow proof of a result by {G}romov on lower bounds for
  scalar curvature.
\newblock {\em Mathematical Research Letters}, 23(2):325 -- 337, 2016.

\bibitem{Bartnik86}
R.~Bartnik.
\newblock The mass of an asymptotically flat manifold.
\newblock {\em Communications on Pure and Applied Mathematics}, 39:661--693,
  1986.

\bibitem{BenattiFogagnolo26}
Luca Benatti and Mattia Fogagnolo.
\newblock An isoperimetric characterization of a new {A}{D}{M}-like mass for
  ${C}^0$-asymptotically flat manifolds.
\newblock arXiv:2608.27103 [math.DG]. \url{https://arxiv.org/abs/2608.27103}.
\newblock Accessed September 21, 2026.

\bibitem{BiHaoHeShiZhu26}
Yuchen Bi, Tianze Hao, Shihang He, Yuguang Shi, and Jintian Zhu.
\newblock A proof for the {R}iemannian positive mass theorem up to dimension
  19.
\newblock arXiv:2603.02769 [math.DG]. \url{https://arxiv.org/abs/2603.02769}.
\newblock Accessed June 23, 2026.

\bibitem{Bray11}
Hubert~L. Bray.
\newblock On the {P}ositive {M}ass, {P}enrose, and {Z}{A}{S} inequalities in
  {G}eneral {D}imension.
\newblock In Hubert~L. Bray and William P.~Minicozzi II, editors, {\em Surveys
  in Geometric Analysis and Relativity}, volume~20 of {\em Advanced Lectures in
  Mathematics}, pages 1 -- 28. International Press of Boston, Inc., 2011.

\bibitem{PBG26}
Paula Burkhardt-Guim.
\newblock Smoothing ${L}^\infty$ {R}iemannian metrics with nonnegative scalar
  curvature outside of a singular set.
\newblock {\em Mathematische Annalen}, 395(57).

\bibitem{PBG19}
Paula Burkhardt-Guim.
\newblock Pointwise lower scalar curvature bounds for ${C}^0$ metrics via
  regularizing {R}icci flow.
\newblock {\em Geometric and Functional Analysis}, 29:1703 --1772, 2019.

\bibitem{PBG20}
Paula Burkhardt-Guim.
\newblock Defining pointwise lower scalar curvature bounds for ${C}^0$ metrics
  with regularization by {R}icci flow.
\newblock {\em Symmetry, Integrability and Geometry: Methods and Applications
  (SIGMA)}, 16:10 pages, 2020.

\bibitem{PBG23}
Paula Burkhardt-Guim.
\newblock {A}{D}{M} mass for ${C}^0$ metrics and distortion under
  {R}icci-{D}e{T}urck flow.
\newblock {\em Journal f\"ur die reine und angewandte Mathematik}, 2024, 2023.

\bibitem{CaiWang26}
Jing-Bin Cai and Bing Wang.
\newblock Tthe {R}icci-{D}e{T}urck flow on complete manifolds.
\newblock arXiv:2603.22834 [math.DG]. \url{https://arxiv.org/abs/2603.22834}.
\newblock Accessed June 25, 2026.

\bibitem{ChodoshMantoulidisSchulze26}
Otis Chodosh, Christos Mantoulidis, and Felix Schulze.
\newblock Generic regularity for minimizing hypersurfaces in dimensions 9 and
  10.
\newblock {\em Publications math\'ematiques de l'IHES}, page to appear, 2026.

\bibitem{Chrusciel88}
P.T. Chru\'sciel.
\newblock On the invariant mass conjecture in general relativity.
\newblock {\em Communications in Mathematical Physics}, 120:233 -- 248, 1988.

\bibitem{ChuLeeWan26}
Jianchun Chu, Man~Chun Lee, and Jingbo Wan.
\newblock Rigidity of positive mass theorem with fast metric decay.
\newblock arXiv:2607.17236 [math.DG]. \url{https://arxiv.org/abs/2607.17236}.
\newblock Accessed July 21, 2026.

\bibitem{ChuLeeZhu22}
Jianchun Chu, Man~Chun Lee, and Jintian Zhu.
\newblock Singular positive mass theorem with arbitrary ends.
\newblock arXiv:2210.08261 [math.DG]. \url{https://arxiv.org/abs/2210.08261}.
\newblock Accessed June 5, 2024.

\bibitem{DaiMa07}
Xianzhe Dai and Li~Ma.
\newblock Mass under the {R}icci flow.
\newblock {\em Communications in Mathematical Physics}, 274:65 -- 80, 2007.

\bibitem{DaiSunWang25}
Xianzhe Dai, Yukai Sun, and Changliang Wang.
\newblock The positive mass theorem for asymptotically flat manifolds with
  isolated conical singularities.
\newblock {\em Science China Mathematics}, 68:1671 -- 1686, 2025.

\bibitem{DeTurck83}
Dennis DeTurck.
\newblock Deforming metrics in the direction of their {R}icci tensors.
\newblock {\em Journal of Differential Geometry}, 18:157 -- 162, 1983.

\bibitem{FogagnoloGattiPluda26}
Mattia Fogagnolo, Giorgio Gatti, and Alessandra Pluda.
\newblock Scalar curvautre bounds for 3{D} continuous metrics through the
  {I}nverse {M}ean {C}urvature {F}low.
\newblock arXiv:2604.14087 [math.DG]. \url{https://arxiv.org/abs/2604.14087}.
\newblock Accessed June 23, 2026.

\bibitem{Friedman64}
Avner Friedman.
\newblock {\em Partial Differential Equations of Parabolic Type}.
\newblock Prentice--Hall, Englewood Cliffs, N.J., 1964.

\bibitem{Gromov21}
Misha Gromov.
\newblock Four lectures on scalar curvature.
\newblock arXiv:1908.10612v6 [math.DG]. \url{https://arxiv.org/abs/1908.10612}.
\newblock Accessed June 5, 2024.

\bibitem{Gromov14}
Misha Gromov.
\newblock Dirac and {P}lateau billiards in domains with corners.
\newblock {\em Central European Journal of Mathematics}, 12(8):1109--1156,
  2014.

\bibitem{Hafemann26}
Eduardo Hafemann.
\newblock A low-regularity {R}iemannian positive mass theorem for non-spin
  manifolds with distributional curvature.
\newblock arXiv:2602.03451 [math.DG]. \url{https://arxiv.org/abs/2602.03451}.
\newblock Accessed June 24, 2026.

\bibitem{Huisken06}
Gerhard Huisken.
\newblock {\em An isoperimetric concept for mass and quasilocal mass},
  volume~3, pages 87--88.
\newblock European Mathematical Society (EMS), Zurich, 2006.

\bibitem{Jauregui20-2}
Jeffrey~L. Jauregui.
\newblock Scalar curvature and the relative capacity of geodesic balls.
\newblock {\em Proceedings of the American Mathematical Society}, 149(11),
  2021.

\bibitem{JaureguiLeeUnger24}
Jeffrey~L. Jauregui, Dan~A. Lee, and Ryan Unger.
\newblock A note on {H}uisken's isoperimetric mass.
\newblock {\em Letters in Mathematical Physics}, 114(134), 2024.

\bibitem{JiangShengZhang22}
Wenshuai Jiang, Weimin Sheng, and Huaiyu Zhang.
\newblock Removable singularity of positive mass theorem with continuous
  metrics.
\newblock {\em Mathematische Zeitschrift}, 302:302, 2022.

\bibitem{KochLamm12}
Herbert Koch and Tobias Lamm.
\newblock Geometric flows with rough initial data.
\newblock {\em Asian Journal of Mathematics}, 16:209--236, 2012.

\bibitem{Lee19-errata}
Dan~A. Lee.
\newblock Errata for \textbf{Geometric Relativity} by dan a. lee, as of march
  8, 2024.
\newblock
  \url{https://www.ams.org/publications/authors/books/submit-book/gsm-201-errata.pdf}.
\newblock Accessed April 13, 2026.

\bibitem{Lee19}
Dan~A. Lee.
\newblock {\em Geometric Relativity}, volume 201 of {\em Graduate Studies in
  Mathematics}.
\newblock American Mathematical Society, Providence, Rhode Island, 2019.

\bibitem{LeeLeFloch15}
Dan~A. Lee and Philippe~G. LeFloch.
\newblock The positive mass theorem for manifolds with distributional
  curvature.
\newblock {\em Communications in Mathematical Physics}, 339:99--120, 2015.

\bibitem{LeeLitzingerSimon26}
Man-Chun Lee, Florian Litzinger, and Miles Simon.
\newblock Spaces with distributional scalar curvature bounded from below:
  Optimal regularity and positive mass.
\newblock arXiv:2606.23272 [math.DG]. \url{https://arxiv.org/abs/2606.23272}.
\newblock Accessed June 24, 2026.

\bibitem{Li20-2}
Chao Li.
\newblock Dihedral rigidity of parabolic polyhedrons in hyperbolic spaces.
\newblock {\em Symmetry, Integrability and Geometry: Methods and Applications
  (SIGMA)}, 219(099):8 pages, 2020.

\bibitem{Li18}
Yu~Li.
\newblock Ricci flow on asymptotically {E}uclidean manifolds.
\newblock {\em Geometry and Topology}, 22(23):1837--1891, 2018.

\bibitem{Lohkamp99}
Joachim Lohkamp.
\newblock Scalar curvature and hammocks.
\newblock {\em Mathematische Annalen}, 313:385 -- 407, 1999.

\bibitem{MazurowskiYao26-2}
Liam Mazurowski and Xuan Yao.
\newblock A {P}ositive {M}ass {T}heorem for continuous metrics.
\newblock arXiv:2606.19123 [math.DG]. \url{https://arxiv.org/abs/2606.19123}.
\newblock Accessed June 23, 2026.

\bibitem{MazurowskiYao26-1}
Liam Mazurowski and Xuan Yao.
\newblock Quantification of $c^0$ convergence in dimension three.
\newblock arXiv:2604.14087 [math.DG]. \url{https://arxiv.org/abs/2604.14087}.
\newblock Accessed June 23, 2026.

\bibitem{McFeronSzekelyhidi12}
Donovan McFeron and G\'abor Székelyhidi.
\newblock On the positive mass theorem for manifolds with corners.
\newblock {\em Communications in Mathematical Physics}, 313, 2012.

\bibitem{Miao02}
Pengzi Miao.
\newblock Positive mass theorem on manifolds admitting corners along a
  hypersurface.
\newblock {\em Advances in Theoretical and Mathematical Physics}, 6:1163 --
  1182, 2002.

\bibitem{BrendleWang26}
Y.~Wang S.~Brendle.
\newblock A dimension descent scheme for the positive mass theorem in arbitrary
  dimension.
\newblock arXiv:2604.08473 [math.DG]. \url{https://arxiv.org/abs/2604.08473}.
\newblock Accessed June 23, 2026.

\bibitem{SchoenYau79}
Richard Schoen and Shing-Tung Yau.
\newblock On the proof of the positive mass conjecture in general relativity.
\newblock {\em Communications in Mathematical Physics}, 65:45--76, 1979.

\bibitem{SchoenYau81}
Richard Schoen and Shing-Tung Yau.
\newblock Proof of the positive mass theorem. {I}{I}.
\newblock {\em Comm. Math. Phys.}, 79(2):231 -- 260, 1981.

\bibitem{SchoenYau19}
Richard Schoen and Shing-Tung Yau.
\newblock Positive scalar curvature and minimal hypersurface singularities.
\newblock {\em Surv. Diff. Geom.}, 24(1), April 2017.
\newblock DOI:10.4310/SDG.2019.v24.n1.a10.

\bibitem{Schoen89}
Richard~M. Schoen.
\newblock {\em Variational Theory for the Total Scalar Curvature Functional for
  {R}iemannian Metrics and Related Topics}, volume Lecture Notes in
  Mathematics, 1365.
\newblock Springer, Berlin, Heidelberg, 1989.

\bibitem{Simon02}
Miles Simon.
\newblock Deformation of ${C}^0$ {R}iemannian metrics in the direction of their
  {R}icci curvature.
\newblock {\em Communications in Analysis and Geometry}, 10:1033 -- 1074, 2002.

\bibitem{Witten81}
Edward Witten.
\newblock A new proof of the positive energy theorem.
\newblock {\em Communications in Mathematical Physics}, 80:381--402, 1981.

\end{thebibliography}
\end{document}